\documentclass[a4paper,12pt,reqno]{amsart}

\usepackage[T1]{fontenc}
\usepackage[utf8]{inputenc}
\usepackage{lmodern}
\usepackage[english]{babel}

\usepackage[%
	left=2.5cm,       
	right=2.5cm,      
	top=3.5cm,        
	bottom=3.5cm,     
	heightrounded,    
	bindingoffset=0mm 
]{geometry}

\usepackage{amsmath}
\usepackage{amssymb}
\usepackage{mathtools}
\usepackage{mathrsfs}
\usepackage[normalem]{ulem}

\numberwithin{equation}{section}

\usepackage{hyperref} 
\usepackage[noabbrev,capitalize]{cleveref}

\usepackage{bookmark}

\usepackage[initials,
]{amsrefs}

\theoremstyle{plain}
	\newtheorem{theorem}{Theorem}[section]
	\newtheorem{lemma}[theorem]{Lemma}
	\newtheorem{proposition}[theorem]{Proposition}
	\newtheorem{corollary}[theorem]{Corollary}

\theoremstyle{definition}
	\newtheorem{definition}[theorem]{Definition}
	\newtheorem{example}[theorem]{Example}
	\newtheorem{remark}[theorem]{Remark}
	\newtheorem{open.problem}[theorem]{Open Problem}

\newcommand{\N}{\mathbb{N}}
\newcommand{\R}{\mathbb{R}}


\newcommand{\eps}{\varepsilon}

\usepackage[initials]{amsrefs}

\usepackage{tikz}
\usepackage{graphicx}
\usepackage{xcolor}
\usepackage[font=small]{caption}

\newcommand{\closure}[2][3]{%
  {}\mkern#1mu\overline{\mkern-#1mu#2}}
  
\usepackage{enumerate}

\usepackage{url}

\newcommand{\de}{\partial}

\newcommand{\weakto}{\rightharpoonup}

\renewcommand{\phi}{\varphi}
\renewcommand{\rho}{\varrho}
\renewcommand{\theta}{\vartheta}
\newcommand{\M}{\mathscr{M}}

\DeclareMathOperator{\supp}{supp}
\DeclareMathOperator{\sgn}{sgn}

\DeclareMathOperator{\diverg}{div}

\DeclareMathOperator{\Lip}{Lip}
\DeclareMathOperator{\loc}{loc}

\DeclareMathOperator{\Capa}{Cap}

\DeclarePairedDelimiter{\set}{\{}{\}}
\DeclarePairedDelimiter{\abs}{|}{|}

\mathchardef\ordinarycolon\mathcode`\:
\mathcode`\:=\string"8000
\begingroup \catcode`\:=\active
  \gdef:{\mathrel{\mathop\ordinarycolon}}
\endgroup

\newcommand{\mres}{\mathbin{\vrule height 1.6ex depth 0pt width
0.13ex\vrule height 0.13ex depth 0pt width 1.3ex}}

\def\Xint#1{\mathchoice
{\XXint\displaystyle\textstyle{#1}}%
{\XXint\textstyle\scriptstyle{#1}}%
{\XXint\scriptstyle\scriptscriptstyle{#1}}%
{\XXint\scriptscriptstyle\scriptscriptstyle{#1}}%
\!\int}
\def\XXint#1#2#3{{\setbox0=\hbox{$#1{#2#3}{\int}$ }
\vcenter{\hbox{$#2#3$ }}\kern-.6\wd0}}

\def\aint{\Xint-}


\newcommand{\Haus}[1]{\mathscr{H}^{#1}} 
\newcommand{\Leb}[1]{\mathscr{L}^{#1}} 

\renewcommand{\div}{\mathrm{div}} 
\newcommand{\redb}{\mathscr{F}} 


\newcommand{\res}{\mathop{\hbox{\vrule height 7pt width .5pt depth 0pt
\vrule height .5pt width 6pt depth 0pt}}\nolimits}


\allowdisplaybreaks

\overfullrule=20pt

\begin{document}

\title[The fractional variation and the precise representative]{The fractional variation and the precise representative of $BV^{\alpha,p}$ functions}

\author[G.~E.~Comi]{Giovanni E. Comi}
\address[G.~E.~Comi]{Dipartimento di Matematica, Università di Pisa, Largo Bruno Pontecorvo 5, 56127 Pisa, Italy}
\email{giovanni.comi@dm.unipi.it}

\author[D. Spector]{Daniel Spector}
\address[D. Spector]{Department of Mathematics, National Taiwan Normal University, No. 88, Section 4, Tingzhou Road, Wenshan District, Taipei City, Taiwan 116, R.O.C.; Okinawa Institute of Science and Technology Graduate University, Nonlinear Analysis Unit, 1919-1 Tancha, Onna-son, Kunigami-gun, Okinawa, Japan 904-0495}
\email{spectda@protonmail.com}

\author[G. Stefani]{Giorgio Stefani}
\address[G. Stefani]{Department Mathematik und Informatik, Universit\"at Basel, Spiegelgasse 1, CH-4051 Basel, Switzerland}
\email{giorgio.stefani.math@gmail.com}

\thanks{\textit{Acknowledgments}. 
The first author is a member of INdAM--GNAMPA and 
is partially supported by the PRIN 2017 Project \textit{Variational methods for stationary and evolution problems with singularities and interfaces} (n.\ prot.\ 2017BTM7SN). 
The second author is supported by the Taiwan Ministry of Science and Technology under research grant number 110-2115-M-003-020-MY3 and the Taiwan Ministry of Education under the Yushan Fellow Program.
Part of this work was undertaken while the second author was visiting the National Center for Theoretical Sciences in Taiwan. He would like to thank the NCTS for its support and warm hospitality during the visit. 
The third author is a member of INdAM--GNAMPA and 
is partially supported by the ERC Starting Grant 676675 FLIRT -- \textit{Fluid Flows and Irregular Transport} and by the INdAM--GNAMPA Project 2020 \textit{Problemi isoperimetrici con anisotropie} (n.\ prot.\ U-UFMBAZ-2020-000798 15-04-2020).
}

\keywords{Fractional gradient, fractional divergence, fractional variation, Hausdorff measure, fractional capacity, precise representative}

\subjclass[2020]{Primary 46E35. Secondary 28A12}

\date{\today}

\begin{abstract}
We continue the study of the fractional variation following the distributional approach developed in the previous works~\cites{BCCS20,CS19,CS19-2}. 
We provide a general analysis of the distributional space $BV^{\alpha,p}(\R^n)$ of $L^p$ functions, with $p\in[1,+\infty]$, possessing finite fractional variation of  order $\alpha\in(0,1)$.
Our two main results deal with the absolute continuity property of the fractional variation with respect to the Hausdorff measure and the existence of the precise representative of a $BV^{\alpha,p}$ function. 
\end{abstract}

\maketitle

\tableofcontents

\section{Introduction}

\subsection{The fractional variation}

For a parameter $\alpha\in(0,1)$ and an exponent $p\in[1,+\infty]$, the \emph{space of $L^p$ functions with bounded fractional variation} is
\begin{equation}
\label{eqi:def_BV_alpha_p_space}
BV^{\alpha,p}(\R^n)
=
\set*{f\in L^p(\R^n) : |D^\alpha f|(\R^n)<+\infty},
\end{equation}
where
\begin{equation}
\label{eqi:def_frac_var_tot}
|D^\alpha f|(\R^n)
=
\sup\set*{\int_{\R^n} f\,\div^\alpha\phi\,dx : \phi\in C^\infty_c(\R^n;\R^n),\ \|\phi\|_{L^\infty(\R^n;\,\R^n)}\le1}
\end{equation}
is the (total) \emph{fractional variation} of the function $f\in L^p(\R^n)$.
Here and in the following, for sufficiently smooth functions and vector-fields, we let
\begin{equation*}\nabla^\alpha f(x)
=
\mu_{n,\alpha}
\int_{\R^n}\frac{(y-x)(f(y)-f(x))}{|y-x|^{n+\alpha+1}}\,dy,
\quad
x\in\R^n,	
\end{equation*}
and
\begin{equation*}
\div^\alpha\phi(x)
=
\mu_{n,\alpha}
\int_{\R^n}\frac{(y-x)\cdot(\phi(y)-\phi(x))}{|y-x|^{n+\alpha+1}}\,dy,
\quad
x\in\R^n,	
\end{equation*}
be the \emph{fractional gradient} and the \emph{fractional divergence} operators respectively, where $\mu_{n,\alpha}$ is a suitable renormalizing constant depending on $n$ and $\alpha$ only.
The above fractional operators are \emph{dual}, in the sense that
\begin{equation}
\label{eqi:ibp}
\int_{\R^n}f\,\div^\alpha\phi \,dx=-\int_{\R^n}\phi\cdot\nabla^\alpha f\,dx.
\end{equation}


The fractional variation was considered by the first and the third authors in the work~\cite{CS19} in the \emph{geometric} framework $p=1$, also in relation with the naturally associated notion of \emph{fractional Caccioppoli perimeter}.
The fractional variation of an $L^p$ function for an arbitrary exponent $p\in[1,+\infty]$ was then studied by the same authors in the subsequent paper~\cite{CS19-2}, in connection with some embedding-type results arising from some optimal inequalities proved by the second author~\cites{S19,S20-An}.

Since the first appearance of the fractional gradient~\cite{H59}, the literature around $\nabla^\alpha$ and $\div^\alpha$ has been rapidly growing in various directions, such as the study of PDEs~\cites{SS15,SS18,SSS15,SSS18} and of functionals~\cites{BCM20,BCM21,KS21} involving these fractional operators, the discovery of new optimal embedding estimates~\cites{SSS17,S19,S20-An} and the development of a distributional and asymptotic analysis in this fractional framework~\cites{BCCS20,CS19,CS19-2,Sil19}.
We also refer the reader to the survey~\cite{S20-New} and to the monograph~\cite{P16}.

At the present stage of the theory, the fine properties of functions having finite fractional variation are not completely understood and, to our knowledge, only some results~\cite{CS19} in the \emph{geometric regime} $p=1$ are available in the literature.

Besides providing a general treatment of the space $BV^{\alpha,p}(\R^n)$, in the present paper we aim to develop the existing theory in this direction.
On the one side, we study the relation between the fractional variation and the Hausdorff measure.
On the other side, we establish the existence of the precise representative of a $BV^{\alpha,p}$ function.

\subsection{The Hausdorff dimension of the fractional variation}

The natural idea behind the definition of the space $BV^{\alpha,p}(\R^n)$ is that a function $f\in L^p(\R^n)$ belongs to $BV^{\alpha,p}(\R^n)$ if and only if there exists a finite vector-valued Radon measure $D^\alpha f\in\M(\R^n;\R^n)$ such that 
\begin{equation*}
\int_{\R^n} f\,\div^\alpha\phi\,dx
=
-
\int_{\R^n}\phi\cdot d D^\alpha f
\end{equation*}
for all $\phi\in C^\infty_c(\R^n;\R^n)$, generalizing the integration-by-parts formula~\eqref{eqi:ibp}. 

In the classical integer case $\alpha=1$, the variation of a function  $f\in BV(\R^n)$ is known to satisfy
\begin{equation}
\label{eqi:var_haus}
|Df|\ll\Haus{n-1},	
\end{equation}
where $\Haus{s}$ is the $s$-dimensional Hausdorff measure. 
If $f=\chi_E$ for some measurable set $E\subset\R^n$, then it actually holds that 
\begin{equation}
\label{eqi:redb}
|D\chi_E|
=
\Haus{n-1}\mres\redb E,
\end{equation} 
where $\redb E$ is the De Giorgi \emph{reduced boundary} of $E$, see the monographs~\cites{AFP00,M12}.
 
Roughly speaking,  formulas~\eqref{eqi:var_haus} and~\eqref{eqi:redb} mean that the variation measure of a $BV$ function in~$\R^n$ lives on sets with Hausdorff dimension~$n-1$ at least.
By the analogy between the integer and the fractional settings, one may expect that a similar phenomenon should occur also for the fractional variation of order $\alpha\in(0,1)$ on a set of  Hausdorff dimension~$n-\alpha$ at least.
In~\cite{CS19}*{Corollary~5.4}, the first and the third authors confirmed this parallelism by showing that, for a measurable set $E\subset\R^n$ such that $\chi_E \in BV^{\alpha}(\R^n)$ (or, more generally, for any measurable set having locally finite \emph{fractional Caccioppoli perimeter}, see~\cite{CS19}*{Definition~4.1}), it holds that
\begin{equation}
\label{eqi:redb_frac}
|D^\alpha\chi_E|
\le 
c_{n,\alpha}\,
\Haus{n-\alpha}\mres\redb^\alpha E,
\end{equation} 
where $c_{n,\alpha}>0$ depends on $n$ and $\alpha$ only and $\redb^\alpha E$ is the fractional analogue of the De Giorgi \emph{reduced boundary}~\eqref{eqi:redb}, the so-called \emph{fractional reduced boundary} of $E$, see~\cite{CS19}*{Definition~4.7}.
However, as shown in~\cite{CS19}*{Lemma~3.28} by the same authors, if $f\in BV^\alpha(\R^n)$ then the function $u=I_{1-\alpha}f$ (where $I_s$ is the \emph{Riesz potential} of order $s\in(0,n)$, see below for the precise definition) does satisfy $|Du|(\R^n)<+\infty$, with
\begin{equation}
\label{eqi:frac_var_Riesz}
Du=D^\alpha f
\quad
\text{in}\ \M(\R^n;\R^n).
\end{equation}
In particular, by combining~\eqref{eqi:var_haus} with the above~\eqref{eqi:frac_var_Riesz}, we immediately get that 
\begin{equation}
\label{eqi:frac_var_haus}
|D^\alpha f|
\ll
\Haus{n-1}
\end{equation} 
for all $f\in BV^\alpha(\R^n)$, thus ruling out the existence of a \emph{coarea formula} in this fractional setting, see~\cite{CS19}*{Corollary~5.6}.

Equations~\eqref{eqi:redb_frac} and~\eqref{eqi:frac_var_haus} illustrate the richness arising from the innocent-looking definition~\eqref{eqi:def_frac_var_tot} and lead to the idea that the behavior of the fractional variation of a function $f\in L^p(\R^n)$ may depend on its integrability exponent $p\in[1,+\infty]$. 
Our first main result provides a rigorous formulation of this intuitive idea and can be stated as follows.

\begin{theorem}[Absolute continuity properties of the fractional variation]
\label{res:abs_frac_var}
Let $\alpha\in(0,1)$, $p\in[1,+\infty]$ and assume that $f\in BV^{\alpha,p}(\R^n)$. We have the following cases:

\begin{enumerate}[(i)]

\item 
\label{item:abs_frac_var_subcritical}
if $p\in\left[1,\frac n{1-\alpha}\right)$, then
$|D^\alpha f|\ll\Haus{n-1}$;

\item 
\label{item:abs_frac_var_supercritical}
if $p\in\left[\frac n{1-\alpha},+\infty\right]$, then
$|D^\alpha f|\ll\Haus{n-\alpha-\frac np}$.
\end{enumerate}
\end{theorem}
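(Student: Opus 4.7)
Both cases should follow from a Riesz potential representation of the fractional gradient, namely the identification
\[
Du = D^\alpha f \quad \text{in}\ \M(\R^n; \R^n), \qquad u := I_{1-\alpha} f,
\]
already established for $p=1$ in~\cite{CS19}*{Lemma~3.28}. My first task would be to extend this identity to arbitrary $p \in [1,+\infty]$ via the duality $\div^\alpha = \div \circ I_{1-\alpha}$ on test functions, with $u$ interpreted in the appropriate function class: via Hardy--Littlewood--Sobolev, $u \in L^{p^*}(\R^n)$ with $p^* = np/(n-(1-\alpha)p)$ when $p < n/(1-\alpha)$; and via a Morrey-type embedding for the Riesz potential, $u \in C^{0,\beta}_{\loc}(\R^n)$ (modulo additive constants), with $\beta := 1-\alpha - n/p > 0$, when $p > n/(1-\alpha)$.

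In case~(i), once $u \in L^{p^*}(\R^n)$ and $|Du|(\R^n) = |D^\alpha f|(\R^n) < +\infty$, we have $u \in BV(\R^n)$ and the classical absolute continuity of the variation (see \cite{AFP00}) gives $|D^\alpha f| \ll \Haus{n-1}$. The critical exponent $p = n/(1-\alpha)$ yields the same conclusion $|D^\alpha f| \ll \Haus{n-1}$, matching the boundary of case~(ii).

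In case~(ii), the claim $|D^\alpha f| \ll \Haus{n-1+\beta}$ is strictly stronger than the classical $BV$ bound and must exploit the Hölder regularity of $u$. My plan would be to establish the pointwise density bound
\[
\limsup_{r \to 0^+} \frac{|Du|(B_r(x))}{r^{n-1+\beta}} < +\infty \quad \text{for } |Du|\text{-a.e.\ } x \in \R^n,
\]
and conclude by the standard differentiation principle that a Radon measure with bounded upper $s$-density is absolutely continuous with respect to $\Haus{s}$. The estimate would be pursued through the dual formula for the localized variation, combined with the uniform bound $|u(y)-u(x)| \leq [u]_\beta r^\beta$ for $y \in B_r(x)$.

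\textbf{Main obstacle.} The density bound in case~(ii) is the crux. The naive inequality $\left|\int (u-u(x))\,\div\phi\,dy\right| \leq [u]_\beta r^\beta \|\div\phi\|_{L^1(B_r(x))}$ cannot be closed because $\|\div\phi\|_{L^1}$ is not controlled by $\|\phi\|_\infty$ and $r$ alone. I expect the correct route to bypass the pointwise Hölder bound and start instead from $\left|\int f\,\div^\alpha\phi\,dy\right| \leq \|f\|_p \,\|\div^\alpha\phi\|_{p'}$, analyzing $\div^\alpha\phi = \div(I_{1-\alpha}\phi)$ in terms of the geometry of $\supp\phi$. Such an estimate fits naturally into a Riesz/Bessel capacity framework \emph{à la} Adams--Hedberg, from which absolute continuity with respect to $\Haus{n-\alpha-n/p}$ would follow by the standard comparison between such capacities and Hausdorff measures at codimension $\alpha + n/p$.
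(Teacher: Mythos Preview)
Your treatment of case~(i) is correct and coincides with the paper's argument (\cref{res:ext_lemma328}): set $u=I_{1-\alpha}f$, use Hardy--Littlewood--Sobolev to place $u$ in $L^{p^*}$ (the endpoint $p=1$ lands only in weak-$L^{p^*}$ and is handled separately via~\cite{CS19}*{Lemma~3.28}), identify $Du=D^\alpha f$ through the duality $\div^\alpha=I_{1-\alpha}\div$, and invoke the classical $|Du|\ll\Haus{n-1}$.

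For case~(ii) the paper does \emph{not} pass through $u=I_{1-\alpha}f$, and your own diagnosis shows why that route stalls: $C^{0,\beta}$ regularity of $u$ controls oscillation but not total variation, so $|Du|(B_r(x))\le C\,r^{n-1+\beta}$ simply does not follow (a function can oscillate many times at small amplitude on $B_r$ while keeping its $C^{0,\beta}$ seminorm equal to~$1$). The paper works directly with $f$ and proves the decay
\[
|D^\alpha f|(B_r(x)) \le A_{n,\alpha,p}\,\|f\|_{L^p(\R^n)}\,r^{\,n-\alpha-\frac{n}{p}}
\quad\text{for }|D^\alpha f|\text{-a.e.\ }x
\]
(\cref{res:decay_estimate_p}), after which~(ii) follows by the standard upper-density comparison with~$\Haus{s}$. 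The mechanism is an integration-by-parts formula \emph{on balls} (\cref{res:int_by_parts_E_ball}): one multiplies the test field $\phi$ by $\chi_{B_r(x)}$ and expands $\div^\alpha(\chi_{B_r(x)}\phi)$ via the nonlocal Leibniz rule, producing three terms each bounded by $\|f\|_{L^p}$ times an $L^{p'}$ quantity that scales like $r^{n/p'-\alpha}$. The decisive new estimate is $\|\nabla^\alpha\chi_{B_r}\|_{L^{p'}(\R^n;\R^n)}=c_{n,\alpha,p}\,r^{n/p'-\alpha}$ (\cref{res:fract_grad_estimate}), valid exactly when $p'<\tfrac1\alpha$, i.e.\ $p>\tfrac1{1-\alpha}$. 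To extract $|D^\alpha f|(B_r(x))$ rather than a single pairing, one uses the polar decomposition $D^\alpha f=\sigma\,|D^\alpha f|$ and takes $\phi\equiv\sigma(x)$ on $B_r(x)$.

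Your proposed fix---bounding $\|\div^\alpha\phi\|_{L^{p'}}$ for $\phi$ supported in $B_r$, or invoking Bessel capacities \`a la Adams--Hedberg---points in the right direction but remains a sketch. The capacity comparison could in principle close the argument, yet it is a detour; the paper's route is more elementary and yields the explicit quantitative bound~\eqref{eq:abs_cont_estimate}. The concrete idea missing from your outline is to localize with the \emph{sharp} cutoff $\chi_{B_r(x)}$ and absorb the resulting commutators through the fractional Leibniz rule, rather than trying to control $\div^\alpha\phi$ globally for a smooth $\phi$.
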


As shown by  \cref{res:abs_frac_var}, the fractional variation in the \emph{subcritical regime} $p<\frac n{1-\alpha}$ is comparable with the Hausdorff measure of dimension~$n-1$, in accordance with~\eqref{eqi:frac_var_haus}.
In fact, we can actually prove a deeper property, in analogy with the relation~\eqref{eqi:frac_var_Riesz}.
Precisely, the Riesz potential operator
\begin{equation*}
I_{1-\alpha}
\colon
BV^{\alpha,p}(\R^n)
\to
BV^{1,\frac{np}{n-(1-\alpha)p}}(\R^n)
\end{equation*}
is continuous whenever $p<\frac n{1-\alpha}$ (see \cref{res:ext_lemma328}\eqref{item:Riesz_of_BV_alpha_p} below for the detailed statement), from which item~\eqref{item:abs_frac_var_subcritical} in \cref{res:abs_frac_var} immediately follows.
Here and in the following, for any $p\in[1,+\infty]$, we let
\begin{equation*}
BV^{1,p}(\R^n) = \set*{f \in L^p(\R^n) : |D f|(\R^n) < + \infty}
\end{equation*}
be the space of $L^p$ functions having finite variation, extending the definition  in~\eqref{eqi:def_BV_alpha_p_space} to the integer case $\alpha=1$.

In the \emph{supercritical regime} $p\ge\frac n{1-\alpha}$ instead, the fractional variation is comparable with the Hausdorff measure of dimension~$n-\alpha-\frac np$, thus recovering~\eqref{eqi:redb_frac} in the case $p=+\infty$. 
The proof of item~\eqref{item:abs_frac_var_supercritical} of \cref{res:abs_frac_var} is more delicate and requires a finer analysis. 
The overall idea is to adapt the strategy developed in~\cite{CS19}*{Section~5} for sets with (locally) finite \emph{fractional Caccioppoli perimeter}  to the present more general $L^p$ framework.
The key role in this approach is played by the following \emph{decay estimate} for the fractional variation of a function $f\in BV^{\alpha,p}(\R^n)$ with $p\ge\frac n{n-\alpha}$,
\begin{equation}
\label{eqi:decay_frac}
|D^{\alpha} f|(B_r(x)) 
\le 
c_{n,\alpha,p} 
\|f\|_{L^p(\R^n)}\,
r^{n-\alpha-\frac{n}{p}}, 	
\end{equation}
valid for $|D^\alpha f|$-a.e.\ $x\in\R^n$ and all $r>0$ sufficiently small, where $c_{n,\alpha,p}>0$ is a constant depending on~$n$, $\alpha$, and~$p$ only (see \cref{res:decay_estimate_p} below for the precise statement). The validity of \eqref{eqi:decay_frac} is suggested by the following heuristic argument, valid for all $f \in BV^{\alpha, p}(\R^n)$ such that 
\begin{equation}
\label{eqi:heu_ass}
(D^{\alpha} f)_j \ge 0
\quad
\text{for all}\ 
j \in \set{1, \dots, n}.
\end{equation}
If $\phi \in C^{\infty}_{c}(B_2)$ is such that $\phi \ge 0$ and $\phi \equiv 1$ on $B_1$, then
\begin{equation*} 
(D^{\alpha} f)_j(B_r(x)) \le \int_{\R^n} \phi\left (\frac{y - x}{r} \right ) \, d (D^{\alpha} f)_j(y) 
= 
- r^{- \alpha}  \int_{\R^n} f(y) \,( \nabla^{\alpha} \phi)_j \left (\frac{y - x}{r} \right ) \, dy, \end{equation*}
thanks to \eqref{eqi:ibp} and the $\alpha$-homogeneity of the fractional gradient (\cite{Sil19}*{Theorem 4.3}), so that
\begin{align*} (D^{\alpha} f)_j(B_r(x)) 
&\le 
\|f\|_{L^{p}(\R^n)} \left ( \int_{\R^n} |\nabla^{\alpha} \phi(y)|^{\frac{p}{p-1}} r^{n} \, dy \right )^{1 - \frac{1}{p}} r^{- \alpha} 
\\
& = 
\|f\|_{L^{p}(\R^n)} \|\nabla^{\alpha} \phi \|_{L^{\frac{p}{p-1}}(\R^n; \R^{n})} r^{n - \alpha - \frac{n}{p}}, \end{align*}
which gives~\eqref{eqi:decay_frac}. 
Without~\eqref{eqi:heu_ass}, the decay estimate~\eqref{eqi:decay_frac} is a consequence of some new integrability properties in Lorentz spaces of the fractional gradient and of an integration-by-parts formula of $BV^{\alpha,p}$ functions on balls which may be of some independent interest (see \cref{res:optimal_estimate} and \cref{res:int_by_parts_E_ball}, respectively). 

We note that \cref{res:abs_frac_var} still holds even in the limit as $\alpha \to 1^-$. 
Indeed, for all $p \in [1, +\infty]$ and $f \in BV^{1,p}(\R^n)$ we get that $|D f|\ll\Haus{n-1}$, since point \eqref{item:abs_frac_var_subcritical} now applies to all $p \in [1, +\infty)$, while point \eqref{item:abs_frac_var_supercritical} refers only to $p=+\infty$, for which we have $n-1-\frac np = n - 1$. 
This is in fact a well-known result for functions in $BV_{\rm loc}(\R^n)$, see \cite{AFP00}*{Lemma~3.76} for instance. 
On the contrary,  \cref{res:abs_frac_var} is not optimal in the limit as $\alpha \to 0^+$.
Indeed, in virtue of \cite{BCCS20}*{Theorem~3.3 and Remark~A.3}, if $p \in [1, +\infty)$ and $f \in BV^{0,p}(\R^n)$, then $|D^0 f| \ll \Leb{n}$ (where the space $BV^{0,p}(\R^n)$ is defined as in \eqref{eqi:def_BV_alpha_p_space} with $\alpha = 0$, see~\cite{BCCS20} for a more detailed presentation).

\subsection{The precise representative of a \texorpdfstring{$BV^{\alpha,p}$}{BVˆ(alpha,p)} function}

Formulas~\eqref{eqi:var_haus} and~\eqref{eqi:redb} suggest that the set of  discontinuity points (in the   measure-theoretical sense) of a $BV$ function should have Hausdorff dimension~$n-1$.
In more precise terms, if $f\in BV(\R^n)$, then the limit 
\begin{equation}
\label{eqi:def_prec_rep}
f^\star(x)
=
\lim_{r\to0^+}
\aint_{B_r(x)}
f(y)\,dy
\end{equation}
exists for $\Haus{n-1}$-a.e.\ $x\in\R^n$.
In fact, the limit in~\eqref{eqi:def_prec_rep} can be strengthened as
\begin{equation*}
\lim_{r\to0^+}
\aint_{B_r(x)}
|f(y)-f^\star(x)|^{\frac n{n-1}}\,dy=0
\end{equation*}
for $\Haus{n-1}$-a.e.\ $x\in\R^n\setminus J_f$, see~\cite{EG15}*{Section~5.9} for example, where $J_f\subset\R^n$ is the so-called \emph{jump set} of the function $f\in BV(\R^n)$ (if $f\in W^{1,1}(\R^n)$, then $J_f$ is empty). 

The function $f^\star$ defined by~\eqref{eqi:def_prec_rep} is the so-called \emph{precise representative} of the function~$f$ (by convention, we set $f^*(x)=0$ if the limit in~\eqref{eqi:def_prec_rep} does not exist).
The well-posedness of the precise representative~\eqref{eqi:def_prec_rep} of a $BV^{\alpha,p}$ function is not known at the present moment.
Our second main result moves in this direction and can be briefly stated as follows (for a more precise statement, we refer the reader to \cref{res:quasicont_any_p} below).

\begin{theorem}[The precise representative of a $BV^{\alpha,p}$ function]
\label{resi:frac_rep}
Let $\alpha\in(0,1)$, $p\in[1,+\infty]$ and $\eps>0$.
If $f\in BV^{\alpha,p}(\R^n)$, then the limit $f^\star(x)$ 
exists for $\Haus{n - \alpha + \eps}$-a.e.\ $x\in\R$.
Moreover, for any such point $x\in\R^n$, it holds that
\begin{equation*}
\lim_{r\to0^+}
\aint_{B_r(x)}
|f(y)-f^\star(x)|^q\,dy=0
\end{equation*}
for any $q\in\left[1,\bar q_\eps\right]$, where $\bar q_\eps\in\left[1,\frac n{n-\alpha}\right)$ is such that $\lim\limits_{\eps\to0^+}\bar q_\eps=\frac n{n-\alpha}$.   
\end{theorem}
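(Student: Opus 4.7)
The strategy is to show that the spherical averages $\aint_{B_r(x)} f(y)\,dy$ form a Cauchy family as $r\to0^+$ outside an exceptional set of controlled Hausdorff size, by combining a localized fractional Sobolev--Poincar\'e inequality on balls with a density estimate for the finite Radon measure $|D^\alpha f|$.

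The principal step is to derive, for each $q\in[1,\tfrac{n}{n-\alpha})$, a fractional Sobolev--Poincar\'e inequality of the form
\begin{equation*}
\bigg(\aint_{B_r(x)} \abs{f(y)-f_{B_r(x)}}^q\,dy\bigg)^{\!1/q}
\le
C \left(\frac{|D^\alpha f|(B_{Kr}(x))}{r^{n-\alpha}} + r^{\gamma}\,\|f\|_{L^p(\R^n)}\right),
\end{equation*}
valid for every $x\in\R^n$ and $r>0$, with $K>1$ and some $\gamma=\gamma(n,\alpha,p,q)>0$. The leading term mirrors the global fractional Sobolev embedding $BV^{\alpha,1}(\R^n)\hookrightarrow L^{n/(n-\alpha)}(\R^n)$ applied to a localized version of $f$, while the residual $r^\gamma\|f\|_{L^p(\R^n)}$ encodes the intrinsically nonlocal cross contribution produced by any cutoff-type localization of the fractional gradient. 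The integration-by-parts formula on balls and the Lorentz-space integrability of the fractional gradient announced in the excerpt are the key inputs for controlling this tail.

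For the second step, since $|D^\alpha f|$ is a finite Radon measure on $\R^n$, the classical upper-density estimate for Radon measures yields, for every $\eps>0$,
\begin{equation*}
\Haus{n-\alpha+\eps}\left(\left\{x\in\R^n:\limsup_{r\to0^+}\frac{|D^\alpha f|(B_r(x))}{r^{n-\alpha+\eps/2}}>0\right\}\right)=0.
\end{equation*}
Consequently, at $\Haus{n-\alpha+\eps}$-a.e.\ $x$, the first term on the right-hand side of the Sobolev--Poincar\'e inequality decays like $r^{\eps/2}$. Choosing $\bar q_\eps$ as the largest exponent in $[1,\tfrac{n}{n-\alpha})$ for which $\gamma(n,\alpha,p,\bar q_\eps)$ remains strictly positive, the whole right-hand side of the Poincar\'e inequality goes to $0$ as $r\to0^+$; since $\gamma(n,\alpha,p,q)\searrow0$ as $q\nearrow \tfrac{n}{n-\alpha}$, we have $\bar q_\eps<\tfrac{n}{n-\alpha}$ with $\bar q_\eps\to\tfrac{n}{n-\alpha}$ as $\eps\to0^+$. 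The Cauchy condition on $\{f_{B_{2^{-k}}(x)}\}_k$ then follows by a telescoping argument, yielding the existence of $f^\star(x)$ for $\Haus{n-\alpha+\eps}$-a.e.\ $x$; replacing $f_{B_r(x)}$ by $f^\star(x)$ in the Poincar\'e inequality via the triangle inequality, and using Jensen's inequality to extend the convergence to every $q\in[1,\bar q_\eps]$, delivers the quantitative statement of the theorem.

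The main obstacle is the first step. The nonlocality of $\nabla^\alpha$ means that a plain cutoff procedure destroys the fractional gradient structure, so the localized Sobolev--Poincar\'e inequality cannot be obtained by mere adaptation of the classical argument. The integration-by-parts formula on balls is essential to keep track of the localization, but the resulting nonlocal error term has to be delicately controlled in terms of the $L^p$-norm of $f$ over all of $\R^n$, and it is precisely this interplay between the Sobolev exponent $\tfrac{n}{n-\alpha}$ and the integrability parameter $p$ that dictates the strict inequality $\bar q_\eps<\tfrac{n}{n-\alpha}$.
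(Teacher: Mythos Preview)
Your strategy is genuinely different from the paper's, and the gap lies precisely where you flag it: the fractional Poincar\'e inequality you postulate, with a tail of the form $r^{\gamma}\|f\|_{L^p(\R^n)}$ and $\gamma>0$, does not follow from the tools you invoke and in fact fails in that form. If you localize via a cutoff $\eta_r$ supported in $B_{2r}(x)$ and apply the Leibniz rule (as in \cref{res:localization}), the nonlocal contributions $\|f\,\nabla^\alpha\eta_r\|_{L^1}$ and $\|\nabla^\alpha_{\rm NL}(f,\eta_r)\|_{L^1}$ scale like $r^{\,n/p'-\alpha}\,\|f\|_{L^p(\R^n)}$. Feeding this into the global embedding $\|g\|_{L^{n/(n-\alpha)}}\le c\,|D^\alpha g|(\R^n)$ and passing to the averaged quantity divides by $r^{n-\alpha}$, so the tail acquires exponent $n/p'-\alpha-(n-\alpha)=-n/p<0$; subtracting the mean $f_{B_r}$ does not repair this, since $\|f-f_{B_r}\|_{L^p(\R^n)}$ carries a global constant and is no smaller. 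In particular, for $p=1$ the exponent is $-n$, so the scheme is off by a full power of $r^{-n}$. The integration-by-parts formula on balls (which the paper only has for $p>\tfrac{1}{1-\alpha}$) does not salvage this either: the cross terms it produces are again controlled by $r^{\,n/p'-\alpha}\|f\|_{L^p}$, with the same defect. A Poincar\'e of the type you want would rather carry a tail of the form $r\int_{\R^n\setminus B_{2r}(x)}|w-x|^{-(n+1-\alpha)}\,d|D^\alpha f|(w)$, arising from a potential representation of $f$; that tail \emph{can} be shown to vanish at $\Haus{n-\alpha+\eps}$-a.e.\ point via the same density argument, but establishing the representation itself for general $f\in BV^{\alpha,p}(\R^n)$ is a separate and nontrivial task that your plan does not address.

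By contrast, the paper avoids any local Poincar\'e inequality: it proves a continuous embedding $BV^{\alpha,p}(\R^n)\subset S^{\beta,q}(\R^n)=L^{\beta,q}(\R^n)$ for every $\beta<\alpha$ and suitable $q>1$ (\cref{res:BV_alpha_p_in_S_beta_q}), then quotes the classical quasicontinuity theory for Bessel potential spaces (\cref{res:quasicont_S_alpha_p}) and the relation $\Haus{n-\beta q+\delta}\ll\Capa_{\beta,q}$; the exponents $p\ge\tfrac{n}{n-\alpha}$ are handled by the localization \cref{res:localization}, which puts $f\eta$ into $BV^{\alpha,1}(\R^n)$. The $\eps$-loss in the Hausdorff exponent comes not from a nonlocal tail but from the differentiability loss $\beta<\alpha$ needed for the embedding. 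This route sidesteps entirely the localization difficulties that undermine your plan.
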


The idea behind the proof of \cref{resi:frac_rep} is very simple and relies on three ingredients naturally arising from our general investigation of the $BV^{\alpha,p}(\R^n)$ space. 
First, we show that $C^\infty_c$ functions are dense in energy in $BV^{\alpha,p}(\R^n)$ provided that $p\in\left[1,\frac n{n-\alpha}\right)$, extending the approximation~\cite{CS19}*{Theorem~3.8} already proved by the first and the third author in the \emph{geometric regime}~$p=1$.
Second, by combining this approximation with an optimal embedding inequality~\cite{S20-An} due to the second author, we establish a fractional analogue of the Gagliardo--Nirenberg--Sobolev inequality, that is, $BV^{\alpha,p}(\R^n)\subset L^{\frac n{n-\alpha}}(\R^n)$ with continuous inclusion.
Third, we exploit this fractional embedding inequality to prove the continuous inclusion of $BV^{\alpha,p}(\R^n)$ into some \emph{Bessel potential space} of suitable fractional order.   
At this point, the existence of the precise representative of a $BV^{\alpha,p}$ function for $p<\frac n{n-\alpha}$ can be inferred from the known theory of \emph{Bessel potential spaces}, see~\cite{AH96}*{Section~6.1} for example.
The remaining exponents $p\ge\frac n{n-\alpha}$ can be recovered from the previous analysis by a simple cut-off argument that may be of some separate interest (see \cref{res:localization} for the detailed statement). 

\subsection{Future developments}

Generally speaking, the precise representative of a function turns out to be the correct object when dealing with the product between the function itself and a sufficiently well-behaved measure.

For example, the precise representative allows to state the general Leibniz rule for the product of two $BV$ functions.
Precisely, if $f,g\in BV(\R^n)\cap L^\infty(\R^n)$, then $fg\in BV(\R^n)$ with
\begin{equation}
\label{eqi:leibniz_bv}
D(fg)
=
g^\star\,Df
+
f^\star\,Dg
\quad
\text{in}\
\M(\R^n;\R^n).
\end{equation}
Note that the two products appearing in right-hand side of~\eqref{eqi:leibniz_bv} are well posed thanks to the combination of the absolute continuity property of the variation~\eqref{eqi:var_haus} and the existence of the precise representative~\eqref{eqi:def_prec_rep}.

With \cref{res:abs_frac_var} and \cref{resi:frac_rep} at hand, the analysis developed in the present work naturally leads to study the interactions between the fractional variation measure and the precise representative of $BV^{\alpha,p}$ functions, aiming at a more general formulation of the Leibniz rule and of the Gauss--Green formula in this fractional setting. 
These results are the main topic of the subsequent paper~\cite{CS21}.

\subsection{Organization of the paper}

The paper is organized as follows.

In \cref{sec:preliminaries}, we quickly set up the notation used throughout the entire work and recall the elementary features of the fractional operators involved.
  
In \cref{sec:BV_alpha_p_space}, we carry out the general analysis of the $BV^{\alpha,p}(\R^n)$ space.
On the one side, we deal with the approximation in energy by smooth functions and the consequent embedding theorems in Lebesgue and Bessel potential spaces, preparing the ground for the proof of \cref{resi:frac_rep}.
On the other side, we treat some integration-by-parts formulas of $BV^{\alpha,p}$ functions against rough test vector-fields and on balls, developing the tools needed for the proof of the decay estimate~\eqref{eqi:decay_frac} and thus of \cref{res:abs_frac_var}. 

In \cref{sec:abs_frac_var}, we prove our first main result \cref{res:abs_frac_var}.
We divide the proof into two parts, dealing with the \emph{subcritical regime}~\eqref{item:abs_frac_var_subcritical} and the \emph{supercritical regime}~\eqref{item:abs_frac_var_supercritical} separately, see \cref{res:ext_lemma328}\eqref{item:Riesz_of_BV_alpha_p} and \cref{res:abs_continuity_p} respectively.
At the end of this section, we provide two examples to show the sharpness of our result in the one-dimensional case~$n=1$. 

In \cref{sec:capa_prec_rep}, after having recalled some known properties of the fractional capacity in Bessel potential spaces  and having proved a localization lemma for $BV^{\alpha,p}$ functions,  we end our paper with the proof of our second main result \cref{resi:frac_rep}.

\section{Preliminaries}
\label{sec:preliminaries}

\subsection{General notation}

We start with a brief description of the main notation used in this paper. In order to keep the exposition the most reader-friendly as possible, we retain the same notation adopted in the previous works~\cites{CS19,CS19-2,BCCS20}.

Given an open set $\Omega\subset\R^n$, we say that a set $E$ is compactly contained in $\Omega$, and we write \mbox{$E\Subset\Omega$}, if the $\closure{E}$ is compact and contained in $\Omega$.
We let $\Leb{n}$ and $\Haus{\alpha}$ be the $n$-dimensional Lebesgue measure and the $\alpha$-dimensional Hausdorff measure on $\R^n$, respectively, with $\alpha\in[0,n]$. 
Unless otherwise stated, a measurable set is a $\Leb{n}$-measurable set. 
We also use the notation $|E|=\Leb{n}(E)$. 
All functions we consider in this paper are Lebesgue measurable, unless otherwise stated. 
We denote by $B_r(x)$ the standard open Euclidean ball with center $x\in\R^n$ and radius $r>0$. 
We let $B_r=B_r(0)$. 
For all $\beta > 0$, we set $\omega_{\beta} =\pi^{\frac{\beta}{2}}/\Gamma\left(\frac{\beta+2}{2}\right)$, where $\Gamma$ is Euler's \emph{Gamma function}, and we recall that $|B_1| = \omega_{n}$ and $\Haus{n-1}(\partial B_{1}) = n \omega_n$.

For $k \in \N_{0} \cup \set{+ \infty}$ and $m \in \N$, we let $C^{k}_{c}(\Omega ; \R^{m})$ and $\Lip_c(\Omega; \R^{m})$ be the spaces of $C^{k}$-regular and, respectively, Lipschitz-regular, $m$-vector-valued functions defined on~$\R^n$ with compact support in the open set~$\Omega\subset\R^n$.

For $m\in\N$, the total variation on~$\Omega$ of the $m$-vector-valued Radon measure $\mu$ is defined as
\begin{equation*}
|\mu|(\Omega)
=
\sup\set*{\int_\Omega\phi\cdot d\mu : \phi\in C^\infty_c(\Omega;\R^m),\ \|\phi\|_{L^\infty(\Omega;\,\R^m)}\le1}.
\end{equation*}
We thus let $\M(\Omega;\R^m)$ be the space of $m$-vector-valued Radon measure  with finite total variation on $\Omega$.
We say that $(\mu_k)_{k\in\N}\subset\M(\Omega;\R^m)$ \emph{weakly converges} to $\mu\in\M (\Omega;\R^m)$, and we write $\mu_k\weakto\mu$ in $\M (\Omega;\R^m)$ as $k\to+\infty$, if 
\begin{equation}\label{eq:def_weak_conv_meas}
\lim_{k\to+\infty}\int_\Omega\phi\cdot d\mu_k=\int_\Omega\phi\cdot d\mu
\end{equation} 
for all $\phi\in C_c^0(\Omega;\R^m)$. Note that we make a little abuse of terminology, since the limit in~\eqref{eq:def_weak_conv_meas} actually defines the \emph{weak*-convergence} in~$\M (\Omega;\R^m)$. 

For any exponent $p\in[1,+\infty]$, we let $L^p(\Omega;\R^m)$ be the space of $m$-vector-valued Lebesgue $p$-integrable functions on~$\Omega$.

We let
\begin{equation*}
W^{1,p}(\Omega;\R^m)
=
\set*{u\in L^p(\Omega;\R^m) : [u]_{W^{1,p}(\Omega;\,\R^m)}=\|\nabla u\|_{L^p(\Omega;\, \R^{nm})}<+\infty}
\end{equation*}
be the space of $m$-vector-valued Sobolev functions on~$\Omega$, see for instance~\cite{Leoni17}*{Chapter~11} for its precise definition and main properties. We also let
\begin{equation*}
w^{1,p}(\Omega;\R^m)
=
\set*{u\in L^p_{\loc}(\Omega;\R^m) : [u]_{W^{1,p}(\Omega;\,\R^m)}<+\infty}.
\end{equation*}
We let
\begin{equation*}
BV(\Omega;\R^m)
=
\set*{u\in L^1(\Omega;\R^m) : [u]_{BV(\Omega;\,\R^m)}=|Du|(\Omega)<+\infty}
\end{equation*}
be the space of $m$-vector-valued functions of bounded variation on~$\Omega$, see for instance~\cite{AFP00}*{Chapter~3} or~\cite{EG15}*{Chapter~5} for its precise definition and main properties. We also let 
\begin{equation*}
bv(\Omega;\R^m)=\set*{u\in L^1_{\loc}(\Omega;\R^m) : [u]_{BV(\Omega;\,\R^m)}<+\infty}.
\end{equation*}
For $\alpha\in(0,1)$ and $p\in[1,+\infty)$, we let
\begin{equation*}
W^{\alpha,p}(\Omega;\R^m)
=\set*{u\in L^p(\Omega;\R^m) : [u]_{W^{\alpha,p}(\Omega;\,\R^m)}=\left(\int_\Omega\int_\Omega\frac{|u(x)-u(y)|^p}{|x-y|^{n+p\alpha}}\,dx\,dy\right)^{\frac{1}{p}}\!<+\infty}
\end{equation*}
be the space of $m$-vector-valued fractional Sobolev functions on~$\Omega$, see~\cite{DiNPV12} for its precise definition and main properties. We also let 
\begin{equation*}
w^{\alpha,p}(\Omega; \R^m)=\set*{u\in L^p_{\loc}(\Omega;\R^m) : [u]_{W^{\alpha,p}(\Omega;\,\R^m)}<+\infty}.
\end{equation*}
For $\alpha\in(0,1)$ and $p=+\infty$, we simply let
\begin{equation*}
W^{\alpha,\infty}(\Omega;\R^m)=\set*{u\in L^\infty(\Omega;\R^m) : \sup_{x,y\in \Omega,\, x\neq y}\frac{|u(x)-u(y)|}{|x-y|^\alpha}<+\infty},
\end{equation*}
so that $W^{\alpha,\infty}(\Omega;\R^m)=C^{0,\alpha}_b(\Omega;\R^m)$, the space of $m$-vector-valued bounded $\alpha$-H\"older continuous functions on~$\Omega$.

In order to avoid heavy notation, if the elements of a function space $F(\Omega;\R^m)$ are real-valued (i.e., $m=1$), then we will drop the target space and simply write~$F(\Omega)$.

Given $\alpha\in(0,n)$, we let
\begin{equation}\label{eq:Riesz_potential_def} 
I_{\alpha} f(x) 
= 
2^{-\alpha} \pi^{- \frac{n}{2}} \frac{\Gamma\left(\frac{n-\alpha}2\right)}{\Gamma\left(\frac\alpha2\right)}
\int_{\R^{n}} \frac{f(y)}{|x - y|^{n - \alpha}} \, dy, 
\quad
x\in\R^n,
\end{equation}
be the Riesz potential of order $\alpha$ of $f\in C^\infty_c(\R^n;\R^m)$. We recall that, if $\alpha,\beta\in(0,n)$ satisfy $\alpha+\beta<n$, then we have the following \emph{semigroup property}
\begin{equation}\label{eq:Riesz_potential_semigroup}
I_{\alpha}(I_\beta f)=I_{\alpha+\beta}f
\end{equation}
for all $f\in C^\infty_c(\R^n;\R^m)$. In addition, if $1<p<q<+\infty$ satisfy 
$
\frac{1}{q}=\frac{1}{p}-\frac{\alpha}{n},	
$
then there exists a constant $C_{n,\alpha,p}>0$ such that the operator in~\eqref{eq:Riesz_potential_def} satisfies
\begin{equation}\label{eq:Riesz_potential_boundedness}
\|I_\alpha f\|_{L^q(\R^n;\,\R^m)}\le C_{n,\alpha,p}\|f\|_{L^p(\R^n;\,\R^m)}
\end{equation}
for all $f\in C^\infty_c(\R^n;\,\R^m)$. As a consequence, the operator in~\eqref{eq:Riesz_potential_def} extends to a linear continuous operator from $L^p(\R^n;\R^m)$ to $L^q(\R^n;\R^m)$, for which we retain the same notation. For a proof of~\eqref{eq:Riesz_potential_semigroup} and~\eqref{eq:Riesz_potential_boundedness}, we refer the reader to~\cite{S70}*{Chapter~V, Section~1} and to~\cite{G14-M}*{Section~1.2.1}.

Given $\alpha\in(0,1)$, we also let
\begin{equation}\label{eq:def_frac_Laplacian}
(- \Delta)^{\frac{\alpha}{2}} f(x) 
= 
\nu_{n, \alpha}
\int_{\R^n} \frac{f(x + y) - f(x)}{|y|^{n + \alpha}}\,dy,
\quad
x\in\R^n,
\end{equation}
be the fractional Laplacian (of order~$\alpha$) of $f \in\Lip_b(\R^{n};\R^m)$, where
\begin{equation*}
\nu_{n,\alpha}=2^\alpha\pi^{-\frac n2}\frac{\Gamma\left(\frac{n+\alpha}{2}\right)}{\Gamma\left(-\frac{\alpha}{2}\right)},
\quad
\alpha\in(0,1).
\end{equation*}

Finally, we let
\begin{equation}\label{eq:def_Riesz_transform}
R f(x)
=
\pi^{-\frac{n+1}2}\,\Gamma\left(\tfrac{n+1}{2}\right)\,\lim_{\eps\to0^+}\int_{\set*{|y|>\eps}}\frac{y\,f(x+y)}{|y|^{n+1}}\,dy,
\quad
x\in\R^n,
\end{equation}
be the (vector-valued) \emph{Riesz transform} of a (sufficiently regular) function~$f$. 
We refer the reader to~\cite{G14-M}*{Sections~2.1 and~2.4.4}, \cite{S70}*{Chapter~III, Section~1} and~\cite{S93}*{Chapter~III} for a more detailed exposition. 
We warn the reader that the definition in~\eqref{eq:def_Riesz_transform} agrees with the one in~\cites{S93} and differs from the one in~\cites{G14-M,S70} for a minus sign, so that $R=\nabla I_1$ on $C^\infty_c(\R^n)$ in particular.
The Riesz transform~\eqref{eq:def_Riesz_transform} is a singular integral of convolution type, thus in particular it defines a continuous operator $R\colon L^p(\R^n)\to L^p(\R^n;\R^{n})$ for any given $p\in(1,+\infty)$, see~\cite{G14-C}*{Corollary~5.2.8}.
We also recall that its components $R_i$ satisfy
\begin{equation*}
\sum_{i=1}^nR_i^2=-\mathrm{Id}
\quad
\text{on}\ L^2(\R^n),
\end{equation*}
see~\cite{G14-C}*{Proposition~5.1.16}.

\subsection{The operators \texorpdfstring{$\nabla^\alpha$}{nablaˆalpha} and \texorpdfstring{$\div^\alpha$}{divˆalpha}} 

We briefly recall the definitions and the essential features of the non-local operators~$\nabla^\alpha$ and~$\diverg^\alpha$, see~\cites{S19,CS19,CS19-2,BCCS20} and~\cite{P16}*{Section~15.2}.

Let $\alpha\in(0,1)$ and set 
\begin{equation*}
\mu_{n, \alpha} 
= 
2^{\alpha}\, \pi^{- \frac{n}{2}}\, \frac{\Gamma\left ( \frac{n + \alpha + 1}{2} \right )}{\Gamma\left ( \frac{1 - \alpha}{2} \right )}.
\end{equation*}
We let
\begin{equation*}
\nabla^{\alpha} f(x) 
=
\mu_{n, \alpha} \lim_{\eps \to 0^+} \int_{\{ |y| > \eps \}} \frac{y \, f(x + y)}{|y|^{n + \alpha + 1}} \, dy,
\quad
x\in\R^n,
\end{equation*}
be the \emph{fractional $\alpha$-gradient} of $f\in\Lip_c(\R^n)$ and, similarly, we let
\begin{equation*}
\div^{\alpha} \varphi(x) 
= 
\mu_{n, \alpha} \lim_{\eps \to 0^+} \int_{\{ |y| > \eps \}} \frac{y \cdot \varphi(x + y)}{|y|^{n + \alpha + 1}} \, dy,
\quad
x\in\R^n,
\end{equation*}
be the \emph{fractional $\alpha$-divergence} of $\phi\in\Lip_c(\R^n;\R^n)$. 
The non-local operators~$\nabla^\alpha$ and~$\diverg^\alpha$ are well defined in the sense that the involved integrals converge and the limits exist. Moreover, since 
\begin{equation*}
\int_{\set*{|z| > \eps}} \frac{z}{|z|^{n + \alpha + 1}} \, dz=0,
\quad
\forall\eps>0,
\end{equation*}
it is immediate to check that $\nabla^{\alpha}c=0$ for all $c\in\R$ and
\begin{align*}
\nabla^{\alpha} f(x)
&=\mu_{n, \alpha} \int_{\R^{n}} \frac{(y - x)  (f(y) - f(x)) }{|y - x|^{n + \alpha + 1}} \, dy,
\quad
x\in\R^n,
\end{align*}
for all $f\in\Lip_c(\R^n)$. Analogously, we have
\begin{align*}
\div^{\alpha} \varphi(x) 
&= \mu_{n, \alpha} \int_{\R^{n}} \frac{(y - x) \cdot (\varphi(y) - \varphi(x)) }{|y - x|^{n + \alpha + 1}} \, dy,
\quad
x\in\R^n,
\end{align*}
for all $\phi\in\Lip_c(\R^n)$.
From the above expressions, it is not difficult to recognize that, given $f\in\Lip_c(\R^n)$ and $\phi\in\Lip_c(\R^n;\R^n)$, it holds that
\begin{equation*}
\nabla^\alpha f\in L^p(\R^n;\R^n)
\quad\text{and}\quad
\div^\alpha\phi\in L^p(\R^n)	
\end{equation*}
for all $p\in[1,+\infty]$, see~\cite{CS19}*{Corollary~2.3}.
Finally, the fractional operators $\nabla^\alpha$ and $\div^\alpha$ are \emph{dual}, in the sense that
\begin{equation*}
\int_{\R^n}f\,\div^\alpha\phi \,dx=-\int_{\R^n}\phi\cdot\nabla^\alpha f\,dx
\end{equation*}
for all $f\in\Lip_c(\R^n)$ and $\phi\in\Lip_c(\R^n;\R^n)$, see~\cite{Sil19}*{Section~6} and~\cite{CS19}*{Lemma~2.5}.

With a slight abuse of notation, in the following we let $\nabla^1$ and $\div^1$ be the usual (local) gradient and divergence.
Note that this notation is coherent with the asymptotic behavior of the fractional operators $\nabla^\alpha$ and $\div^\alpha$ when $\alpha\to1^-$ for sufficiently regular functions, see the analysis made in~\cite{CS19-2}.

\section{The \texorpdfstring{$BV^{\alpha,p}(\R^n)$}{BVˆ(alpha,p)(Rˆn)} space}

\label{sec:BV_alpha_p_space}

In this section we study the main properties of the $BV^{\alpha,p}$ functions, following the strategy adopted in~\cite{CS19}*{Section~3}.

\subsection{Definition of \texorpdfstring{$BV^{\alpha,p}(\R^n)$}{BVˆ(alpha,p)(Rˆn)}}

Let $\alpha\in(0,1]$ and $p\in[1,+\infty]$.
We say that a function $f\in L^p(\R^n)$ belongs to the space $BV^{\alpha,p}(\R^n)$ if
\begin{equation}
\label{eq:def_frac_var_p}
|D^\alpha f|(\R^n)
=
\sup\set*{\int_{\R^n}f\,\div^\alpha\phi\,dx : \phi\in C^\infty_c(\R^n;\R^n),\ \|\phi\|_{L^\infty(\R^n;\,\R^n)}\le1}
<+\infty,
\end{equation}
see~\cite{CS19}*{Section~3} for the case $p=1$ and the discussion in~\cite{CS19-2}*{Section~3.3} for the case $p\in(1,+\infty]$.	
In the case $p=1$, we simply write $BV^{\alpha,1}(\R^n)=BV^\alpha(\R^n)$.
The resulting linear space 
\begin{equation*}
BV^{\alpha,p}(\R^n)=
\set*{f\in L^p(\R^n) : |D^\alpha f|(\R^n)<+\infty}
\end{equation*}
endowed with the norm
\begin{equation*}
\|f\|_{BV^{\alpha,p}(\R^n)}
=
\|f\|_{L^p(\R^n)}+|D^\alpha f|(\R^n),
\quad
f\in BV^{\alpha,p}(\R^n),
\end{equation*}
is a Banach space and that the fractional variation defined in~\eqref{eq:def_frac_var_p} is lower semicontinuous with respect to the $L^p$-convergence. Similarly as it was proved in the case $p=1$ in \cite{CS19}*{Theorem 3.2}, it is possible to show the following result relating non-local distributional gradients of $BV^{\alpha,p}$ functions to vector valued Radon measures.

\begin{theorem}[Structure Theorem for $BV^{\alpha,p}$ functions]\label{th:structure_BV_alpha}
Let $\alpha\in(0,1), p \in [1, +\infty]$ and $f \in L^{p}(\R^{n})$. Then, $f \in BV^{\alpha,p}(\R^n)$ if and only if there exists a finite vector valued Radon measure $D^{\alpha} f \in \M(\R^n; \R^{n})$ such that 
\begin{equation}\label{eqi:BV_alpha_p_duality} 
\int_{\R^{n}} f\, \div^{\alpha} \varphi \, dx = - \int_{\R^n} \varphi \cdot d D^{\alpha} f 
\end{equation}
for all $\varphi \in C^{\infty}_{c}(\R^n; \R^{n})$. In addition, for any open set $U\subset\R^n$ it holds
\begin{equation*}
|D^{\alpha} f|(U) = \sup\set*{\int_{\R^n}f\,\div^\alpha\phi\ dx : \phi\in C^\infty_c(U;\R^n),\ \|\phi\|_{L^\infty(U;\R^n)}\le1}.
\end{equation*}
\end{theorem}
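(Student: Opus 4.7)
The plan is to follow the Riesz-representation strategy adopted in \cite{CS19}*{Theorem~3.2} for the geometric case $p=1$. Nothing in that argument genuinely requires $p=1$: the coupling $\int_{\R^n} f\,\div^\alpha\phi\,dx$ is well-defined for any $f\in L^p(\R^n)$ and any $\phi\in C^\infty_c(\R^n;\R^n)$, for every $p\in[1,+\infty]$, because $\div^\alpha\phi\in L^q(\R^n)$ for all $q\in[1,+\infty]$ by \cite{CS19}*{Corollary~2.3}, so H\"older's inequality with $\|f\|_{L^p}\,\|\div^\alpha\phi\|_{L^{p'}}$ guarantees finiteness.

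The ``$\Leftarrow$'' direction is immediate: if a finite measure $D^\alpha f\in\M(\R^n;\R^n)$ satisfying \eqref{eqi:BV_alpha_p_duality} exists, then for $\phi\in C^\infty_c(\R^n;\R^n)$ with $\|\phi\|_{L^\infty}\le 1$ one has $\bigl|\int_{\R^n} f\,\div^\alpha\phi\,dx\bigr|=\bigl|\int_{\R^n}\phi\cdot dD^\alpha f\bigr|\le|D^\alpha f|(\R^n)<+\infty$, so taking the supremum in \eqref{eq:def_frac_var_p} yields $f\in BV^{\alpha,p}(\R^n)$. For the ``$\Rightarrow$'' direction, I would define the linear functional $L\colon C^\infty_c(\R^n;\R^n)\to\R$ by $L(\phi)=-\int_{\R^n} f\,\div^\alpha\phi\,dx$. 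The hypothesis $|D^\alpha f|(\R^n)<+\infty$ is precisely the statement that $L$ is bounded with respect to the sup norm, with $\|L\|\le|D^\alpha f|(\R^n)$. By density of $C^\infty_c(\R^n;\R^n)$ in $C_0(\R^n;\R^n)$, the functional $L$ extends uniquely to a bounded linear functional on $C_0(\R^n;\R^n)$, and the Riesz Representation Theorem for vector-valued measures produces a unique finite Radon measure $D^\alpha f\in\M(\R^n;\R^n)$ such that $L(\phi)=\int_{\R^n}\phi\cdot dD^\alpha f$ for all $\phi\in C_0(\R^n;\R^n)$, which is exactly~\eqref{eqi:BV_alpha_p_duality}.

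For the localized formula on an open set $U\subset\R^n$, I would combine the classical Riesz-type variational characterization of the total variation of a vector-valued Radon measure,
\[
|D^\alpha f|(U)=\sup\Bigl\{\int_U\phi\cdot dD^\alpha f:\phi\in C_c(U;\R^n),\,\|\phi\|_{L^\infty}\le1\Bigr\},
\]
with a standard mollification argument upgrading the admissible test class from $C_c(U;\R^n)$ to $C^\infty_c(U;\R^n)$. Given $\phi\in C_c(U;\R^n)$ with $\|\phi\|_\infty\le1$, the mollifications $\phi_\eps=\phi\ast\rho_\eps$ lie in $C^\infty_c(U;\R^n)$ as soon as $\eps<\dist(\supp\phi,\partial U)$, satisfy $\|\phi_\eps\|_\infty\le1$, and converge uniformly to $\phi$; plugging them into \eqref{eqi:BV_alpha_p_duality}, passing to the limit, and exploiting the symmetry $\phi\mapsto-\phi$ of the test class to drop the sign yield the claimed supremum identity.

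The whole argument is essentially a transcription of the $p=1$ case, and no serious obstacle is expected. The only two points that require care are (i) the sign bookkeeping between the defining variational quantity~\eqref{eq:def_frac_var_p} and the dual formulation~\eqref{eqi:BV_alpha_p_duality} produced by Riesz representation, and (ii) the verification that the coupling $\int f\,\div^\alpha\phi\,dx$ remains well-defined across the whole range $p\in[1,+\infty]$, which is exactly what the combination of H\"older's inequality and \cite{CS19}*{Corollary~2.3} provides.
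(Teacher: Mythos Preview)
Your proposal is correct and follows exactly the approach the paper indicates: the paper does not give a proof but simply remarks that the argument of \cite{CS19}*{Theorem~3.2} for $p=1$ carries over verbatim, and your write-up supplies precisely that Riesz-representation argument together with the one extra observation needed for general~$p$, namely that $\div^\alpha\phi\in L^{p'}(\R^n)$ so the pairing is well defined.
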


\subsection{Approximation by smooth functions}

Here and in the rest of the paper, we let $(\rho_\eps)\subset C^\infty_c(\R^n)$ be a family of standard mollifiers as in~\cite{CS19}*{Section~3.3}.
The following approximation theorem is the extension to $BV^{\alpha,p}$ functions of \cite{CS19}*{Lemma~3.5 and Theorem~3.7}.
We leave its proof to the interested reader.

\begin{theorem}[Approximation by $C^\infty\cap BV^{\alpha,p}$ functions] 
\label{res:moll_conv_alpha_p}
Let $\alpha \in (0,1]$ and  $p \in [1, +\infty]$.
Let $f\in BV^{\alpha,p}(\R^n)$ and define
$f_{\eps}= f*\rho_{\eps}$ for all $\eps>0$.
Then $(f_{\eps})_{\eps>0}\subset BV^{\alpha,p}(\R^n) \cap C^{\infty}(\R^n)$ with
$D^{\alpha} f_{\eps} = (\rho_{\eps} \ast D^{\alpha} f) \Leb{n}$
for all $\eps>0$.
Moreover, the following properties hold.
\begin{enumerate}[(i)]
\item 
If $p<+\infty$, then $f_{\eps} \to f$ in $L^p(\R^n)$ as $\eps\to0^+$;
if $p=+\infty$, then $f_{\eps} \to f$ in $L^q_{\loc}(\R^n)$ as $\eps\to0^+$ for all $q\in[1,+\infty)$;
\item 
$D^{\alpha} f_{\eps} \weakto D^{\alpha} f$ in $\M(\R^n; \R^{n})$
and 
$|D^{\alpha} f_{\eps}|(\R^n) \to |D^{\alpha} f|(\R^n)$
as $\eps\to0^+$. 
\end{enumerate}
\end{theorem}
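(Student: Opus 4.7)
The plan is to follow the strategy of~\cite{CS19}*{Lemma~3.5 and Theorem~3.7}, upgrading Young's convolution inequality from~$L^1$ to~$L^p$. Standard mollification immediately gives $f_{\eps}\in C^{\infty}(\R^n)\cap L^p(\R^n)$ with $\|f_\eps\|_{L^p(\R^n)}\le\|f\|_{L^p(\R^n)}$. For $p<+\infty$, the convergence $f_\eps\to f$ in $L^p$ is classical; for $p=+\infty$, the $L^\infty$-bound together with $f_\eps\to f$ almost everywhere upgrades, by dominated convergence, to $L^q_{\loc}$-convergence for all finite $q$.

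The core step is the identity $D^{\alpha}f_\eps=(\rho_\eps*D^{\alpha}f)\,\Leb{n}$. Given $\phi\in C^\infty_c(\R^n;\R^n)$, since $\rho_\eps$ is even, Fubini's theorem yields
\begin{equation*}
\int_{\R^n}f_\eps\,\div^{\alpha}\phi\,dx=\int_{\R^n}f\,(\rho_\eps*\div^{\alpha}\phi)\,dx.
\end{equation*}
The crucial commutation $\rho_\eps*\div^{\alpha}\phi=\div^{\alpha}(\rho_\eps*\phi)$ must then be established by a further Fubini argument. This is where the non-locality of $\div^\alpha$ really enters: one has to justify exchanging convolution with a singular integral against the kernel $(y-x)/|y-x|^{n+\alpha+1}$. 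Integrability is ensured by the elementary bound $|\phi(y)-\phi(x)|\le\Lip(\phi)\,|y-x|$ near the diagonal and by the compact support of $\phi$ far from it. Since $\rho_\eps*\phi\in C^\infty_c(\R^n;\R^n)$, applying \cref{th:structure_BV_alpha} to $f$ gives
\begin{equation*}
\int_{\R^n}f\,\div^{\alpha}(\rho_\eps*\phi)\,dx=-\int_{\R^n}(\rho_\eps*\phi)\cdot dD^{\alpha}f=-\int_{\R^n}\phi\cdot(\rho_\eps*D^{\alpha}f)\,dx,
\end{equation*}
the last equality being Fubini once more. Arbitrariness of $\phi$ combined with \cref{th:structure_BV_alpha} applied to $f_\eps$ yields the claimed identity.

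With this identity in hand, $f_\eps\in BV^{\alpha,p}(\R^n)$ and the bound $|D^{\alpha}f_\eps|(\R^n)\le|D^{\alpha}f|(\R^n)$ follow from Young's inequality applied to $\rho_\eps*D^\alpha f$. The weak convergence $D^\alpha f_\eps\weakto D^\alpha f$ in $\M(\R^n;\R^n)$ is the standard fact that convolving a finite Radon measure with mollifiers produces a weakly convergent sequence: for $\psi\in C^0_c(\R^n;\R^n)$, Fubini reduces the claim to $\rho_\eps*\psi\to\psi$ uniformly on compact sets. Finally, the total variation convergence $|D^\alpha f_\eps|(\R^n)\to|D^\alpha f|(\R^n)$ is obtained by pairing this Young upper bound with lower semicontinuity of $|D^\alpha\cdot|(\R^n)$ with respect to the available convergence of $f_\eps$; in the case $p=+\infty$ one exploits that $\div^\alpha\phi\in L^1(\R^n)$ for $\phi\in C^\infty_c(\R^n;\R^n)$ (from its $O(|x|^{-n-\alpha})$ decay at infinity), so that $L^\infty$ weak-$\ast$ convergence of $f_\eps$ to $f$ suffices to pass to the limit in $\int f_\eps\,\div^\alpha\phi\,dx$.

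The main obstacle is the commutation $\rho_\eps*\div^{\alpha}\phi=\div^{\alpha}(\rho_\eps*\phi)$, the fractional counterpart of the trivial classical identity $\rho_\eps*\div\phi=\div(\rho_\eps*\phi)$, which requires careful handling of the kernel singularity of order $n+\alpha+1$ at the diagonal; all remaining steps reduce to routine applications of Young's inequality, Fubini's theorem, and lower semicontinuity of the fractional variation.
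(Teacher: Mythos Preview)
Your proposal is correct and follows precisely the approach the paper itself indicates: the paper does not give a proof but explicitly points to~\cite{CS19}*{Lemma~3.5 and Theorem~3.7} and leaves the extension to the reader, which is exactly the route you carry out (with the expected upgrade of Young's inequality from $L^1$ to $L^p$ and the handling of the $p=+\infty$ endpoint via the fact that $\div^\alpha\phi\in L^1(\R^n)$).
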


The following result extends the approximation by test functions given in~\cite{CS19}*{Theorem~3.8} to functions in $BV^{\alpha,p}(\R^n)$ for $\alpha\in(0,1)$ and all exponents
$p\in\left[1,\frac n{n-\alpha}\right)$.
In the proof below and in the following, we let
\begin{equation}
\label{eq:def_D_alpha}
\mathcal D^\alpha f(x)
=
\int_{\R^n}
\frac{|f(x+h)-f(x)|}{|h|^{n+\alpha}}
\,dh,
\quad
x\in\R^n,
\end{equation}
for any $f\in\Lip_c(\R^n;\R^m)$, $m\in\N$.
Note that 
$|\nabla^\alpha f(x)|
\le
\mu_{n,\alpha}
\mathcal D^\alpha f(x)$
for all $x\in\R^n$ and that $\mathcal D^\alpha f\in L^p(\R^n)$ for all $p\in[1,+\infty]$.

\begin{theorem}[Approximation by $C^\infty_c$ functions]
\label{res:approx_by_test}
Let 
$\alpha\in(0,1)$ 
and 
$p\in\left[1,\frac n{n-\alpha}\right)$.
If $f\in BV^{\alpha,p}(\R^n)$, then there exists $(f_k)_{k\in\N}\subset C^\infty_c(\R^n)$ such that
\begin{enumerate}[(i)]

\item
$f_k\to f$ in $L^p(\R^n)$ as $k\to+\infty$;

\item
$|D^\alpha f_k|(\R^n)
\to
|D^\alpha f|(\R^n)$
as
$k\to+\infty$.

\end{enumerate}
\end{theorem}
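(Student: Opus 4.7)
The plan is to combine the mollification provided by \cref{res:moll_conv_alpha_p} with a multiplicative cut-off at large scale $R$, controlling the resulting Leibniz-type remainder through the range $p<\frac{n}{n-\alpha}$. Given $f\in BV^{\alpha,p}(\R^n)$, the mollifications $f_\eps:=f*\rho_\eps$ already produce $C^\infty\cap BV^{\alpha,p}$ approximants converging to $f$ both in $L^p$ and in fractional variation by \cref{res:moll_conv_alpha_p}. Hence, via a standard diagonal argument, matters reduce to showing that any $g\in C^\infty(\R^n)\cap BV^{\alpha,p}(\R^n)$ can be strictly approximated by compactly supported smooth functions. To this end, I would fix $\eta\in C^\infty_c(\R^n)$ with $\eta\equiv1$ on $B_1$ and $0\le\eta\le1$, set $\eta_R(x):=\eta(x/R)$, and take $g_R:=\eta_R g\in C^\infty_c(\R^n)$. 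The convergence $g_R\to g$ in $L^p(\R^n)$ (for $p<+\infty$) is immediate by dominated convergence.

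For the convergence of the variations I would rely on the pointwise identity
\begin{equation*}
\nabla^\alpha(\eta_R g)(x)
=
\eta_R(x)\,\nabla^\alpha g(x) + \mathcal{N}_R(x),
\qquad
\mathcal{N}_R(x):=
\mu_{n,\alpha}\int_{\R^n}\frac{(y-x)\,g(y)\,(\eta_R(y)-\eta_R(x))}{|y-x|^{n+\alpha+1}}\,dy,
\end{equation*}
obtained from the elementary splitting $\eta_R(y)g(y)-\eta_R(x)g(x)=\eta_R(x)(g(y)-g(x))+g(y)(\eta_R(y)-\eta_R(x))$. Since \cref{res:moll_conv_alpha_p} yields $\nabla^\alpha g\in L^1(\R^n;\R^n)$ with $\|\nabla^\alpha g\|_{L^1}=|D^\alpha g|(\R^n)$, dominated convergence gives $\eta_R\,\nabla^\alpha g\to\nabla^\alpha g$ in $L^1$. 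Taking absolute values in $\mathcal{N}_R$, applying Fubini and changing variables $h=y-x$ then produces
\begin{equation*}
\|\mathcal{N}_R\|_{L^1(\R^n;\R^n)}
\le
\mu_{n,\alpha}\int_{\R^n}|g(y)|\,\mathcal{D}^\alpha\eta_R(y)\,dy
\le
\mu_{n,\alpha}\,\|g\|_{L^p(\R^n)}\,\|\mathcal{D}^\alpha\eta_R\|_{L^{p'}(\R^n)},
\end{equation*}
where $\mathcal{D}^\alpha$ is as in~\eqref{eq:def_D_alpha} and $p'$ is the H\"older conjugate of $p$.

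The key scaling identity is $\mathcal{D}^\alpha\eta_R(x)=R^{-\alpha}\,\mathcal{D}^\alpha\eta(x/R)$, which yields $\|\mathcal{D}^\alpha\eta_R\|_{L^{p'}(\R^n)}=R^{n/p'-\alpha}\,\|\mathcal{D}^\alpha\eta\|_{L^{p'}(\R^n)}$. A direct examination of~\eqref{eq:def_D_alpha}---splitting $|h|\le1$ and $|h|>1$, and exploiting that $\eta(x+h)$ vanishes for $|h|\lesssim|x|$ when $|x|$ is large---shows $\mathcal{D}^\alpha\eta\in L^\infty(\R^n)$ with decay $\mathcal{D}^\alpha\eta(x)=O(|x|^{-n-\alpha})$ at infinity, hence $\mathcal{D}^\alpha\eta\in L^q(\R^n)$ for every $q\in[1,+\infty]$. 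The prefactor $R^{n/p'-\alpha}$ therefore vanishes as $R\to+\infty$ exactly when $n/p'<\alpha$, that is, $p<\frac{n}{n-\alpha}$. Collecting the estimates, $\nabla^\alpha g_R\to\nabla^\alpha g$ in $L^1$, so $|D^\alpha g_R|(\R^n)=\|\nabla^\alpha g_R\|_{L^1}\to\|\nabla^\alpha g\|_{L^1}=|D^\alpha g|(\R^n)$, and a diagonal procedure between the mollification parameter $\eps$ and the truncation scale $R$ produces the desired sequence $(f_k)\subset C^\infty_c(\R^n)$.

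The main obstacle is the $L^1$-control of the nonlocal remainder $\mathcal{N}_R$: the threshold $p=\frac{n}{n-\alpha}$ is forced by balancing the $\alpha$-homogeneity of $\nabla^\alpha$ against the $n$-dimensional volume growth of the cut-off through H\"older's inequality, and is a natural barrier for this strategy. Pushing beyond this range would require either a different, embedding-based truncation or additional structure on $g$, neither of which enters the classical case $p=1$ of~\cite{CS19}*{Theorem~3.8} that the present statement extends.
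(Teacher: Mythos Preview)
Your argument is correct and follows essentially the same route as the paper: the core step is the H\"older estimate $\int_{\R^n}|f|\,\mathcal D^\alpha\eta_R\,dx\le\|f\|_{L^p}\,\|\mathcal D^\alpha\eta_R\|_{L^{p'}}$ together with the scaling $\|\mathcal D^\alpha\eta_R\|_{L^{p'}}=R^{n/p'-\alpha}\|\mathcal D^\alpha\eta\|_{L^{p'}}$, which vanishes precisely when $p<\frac{n}{n-\alpha}$. The paper delegates the mollification-plus-cutoff structure to~\cite{CS19}*{Theorem~3.8} and only displays this key estimate, whereas you spell out both steps explicitly; the content is the same.
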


\begin{proof}
Let $(\eta_R)_{R>0}\subset C^\infty_c(\R^n)$ be a family of cut-off functions such that 
\begin{equation*}
0\le\eta_R\le1,
\quad
\eta_R=1\ \text{on}\ B_R,
\quad
\supp(\eta_R)\subset\closure{B}_{2R},
\quad
\Lip(\eta_R)\le\frac 2R.
\end{equation*}
We can also assume that $\eta_R(x)=\eta_1(\frac xR)$ for all $x\in\R^n$ and $R>0$. 
The proof now goes as the one of~\cite{CS19}*{Theorem~3.8} with minor modifications.
We simply have to check that 
\begin{equation}
\label{eq:claim_scaling_test_approx}
\lim_{R\to+\infty}
\int_{\R^n} |f(x)|
\int_{\R^n}\frac{|\eta_R(y)-\eta_R(x)|}{|x-y|^{n+\alpha}}\,dy\,dx
=0.
\end{equation} 
Indeed, by H\"older's inequality, we have 
\begin{equation*}
\int_{\R^n} |f(x)|
\int_{\R^n}\frac{|\eta_R(y)-\eta_R(x)|}{|x-y|^{n+\alpha}}\,dy\,dx
\le
\|f\|_{L^p(\R^n)}
\|\mathcal D^\alpha\eta_R\|_{L^q(\R^n)},
\end{equation*}
where $\frac1p+\frac1q=1$, and a simple change of variables shows that
\begin{equation*}
\|\mathcal D^\alpha\eta_R(x)\|_{L^q(\R^n)}
=
R^{\frac nq-\alpha}\,\|\mathcal D^\alpha\eta_1\|_{L^q(\R^n)}
\end{equation*}
for all $R>0$.
The claim in~\eqref{eq:claim_scaling_test_approx} thus follows provided that $\frac nq-\alpha<0$, which is equivalent to 
$p\in\left[1,\frac n{n-\alpha}\right)$, and the proof is complete.
\end{proof}

For the sake of completeness, we also treat the case $\alpha=1$ of the previous result.

\begin{proposition}
\label{res:approx_by_test_BV}
Let $n\in\N$ and $p\in[1,+\infty)$ be such that $p\le\frac n{n-1}$ for $n\ge2$.
If $f\in BV^{1,p}(\R^n)$, then there exists $(f_k)_{k\in\N}\subset C^\infty_c(\R^n)$ such that
\begin{enumerate}[(i)]
\item
$f_k\to f$ in $L^p(\R^n)$ as $k\to+\infty$;
\item
$|D f_k|(\R^n)
\to
|D f|(\R^n)$
as
$k\to+\infty$.
\end{enumerate}
\end{proposition}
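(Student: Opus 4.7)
The plan is to follow the two-step procedure implicit in the proof of \cref{res:approx_by_test}, adapted to the classical case $\alpha=1$: first mollify, then multiply by a cut-off and take a diagonal sequence. Standard $BV$ theory (see \cite{AFP00}*{Chapter~3}) guarantees that $f_\eps = f * \rho_\eps \in C^\infty(\R^n) \cap BV^{1,p}(\R^n)$ satisfies $f_\eps \to f$ in $L^p(\R^n)$, $\nabla f_\eps = \rho_\eps * Df \in L^1(\R^n;\R^n)$, and $|Df_\eps|(\R^n) \to |Df|(\R^n)$ as $\eps \to 0^+$. So the substantive task is to truncate each smooth approximant $f_\eps$ to produce a compactly supported smooth function with controlled variation.

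Fix the family $(\eta_R)_{R>0}$ exactly as in the proof of \cref{res:approx_by_test} and set $g_{\eps,R} = \eta_R f_\eps \in C^\infty_c(\R^n)$. The Leibniz rule gives $\nabla g_{\eps,R} = \eta_R \nabla f_\eps + f_\eps \nabla \eta_R$, so
\begin{equation*}
|Dg_{\eps,R}|(\R^n) \le \int_{\R^n} \eta_R |\nabla f_\eps|\,dx + \int_{\R^n} |f_\eps|\,|\nabla \eta_R|\,dx.
\end{equation*}
The first term tends to $|Df_\eps|(\R^n)$ by monotone convergence, since $\nabla f_\eps \in L^1(\R^n;\R^n)$. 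Combined with the $L^p$-lower semicontinuity of the variation (applied to $g_{\eps,R} \to f_\eps$ in $L^p$, which follows from dominated convergence), the conclusion will reduce to showing that the second term vanishes as $R \to +\infty$.

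This is where the dimensional restriction enters: using $|\nabla \eta_R| \le \frac{2}{R} \chi_{B_{2R} \setminus B_R}$ and H\"older's inequality with conjugate exponent~$q$,
\begin{equation*}
\int_{\R^n} |f_\eps|\,|\nabla \eta_R|\,dx
\le
\frac{c_n}{R}\, R^{n/q}\, \|f_\eps\|_{L^p(B_{2R} \setminus B_R)}.
\end{equation*}
The factor $R^{n/q-1}$ is bounded precisely when $n/q \le 1$, i.e., $p \le \tfrac{n}{n-1}$ for $n \ge 2$, with no restriction for $n=1$. Since $\|f_\eps\|_{L^p(B_{2R} \setminus B_R)} \to 0$ by integrability of $|f_\eps|^p$, the right-hand side vanishes as $R \to +\infty$ under the stated hypothesis. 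A standard diagonal extraction then produces a sequence $f_k = g_{\eps_k, R_k} \in C^\infty_c(\R^n)$ converging to $f$ in $L^p(\R^n)$ with $|Df_k|(\R^n) \to |Df|(\R^n)$.

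The only real obstacle is the borderline exponent $p = \tfrac{n}{n-1}$: there $R^{n/q-1}=1$, and convergence to zero of the boundary term must be extracted solely from $\|f_\eps\|_{L^p(B_{2R} \setminus B_R)} \to 0$. This is an absolute-continuity tail estimate for the $L^p$ integral and causes no genuine difficulty, but it is the step where the sharp relationship between $p$ and $n$ is felt.
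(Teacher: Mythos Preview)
Your proof is correct and follows essentially the same route as the paper's own argument: reduce to $f\in C^\infty(\R^n)\cap BV^{1,p}(\R^n)$ by mollification, multiply by the cut-off~$\eta_R$, and control the cross term $\|f\,\nabla\eta_R\|_{L^1(\R^n;\R^n)}$ via H\"older's inequality, which yields the factor $R^{n(1-1/p)-1}$ whose boundedness is equivalent to $p\le\frac{n}{n-1}$. The paper is slightly terser (invoking \cref{res:moll_conv_alpha_p} to assume smoothness at the outset rather than carrying the diagonal extraction explicitly), but the estimate and the role of the dimensional restriction are identical.
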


\begin{proof}
Thanks to \cref{res:moll_conv_alpha_p}, we can assume $f \in C^{\infty}(\R^n) \cap BV^{1,p}(\R^n)$ without loss of generality. 
Now let $(\eta_R)_{R>0}\subset C^\infty_c(\R^n)$ be a family of cut-off functions as in the proof of \cref{res:approx_by_test}. 
Clearly, $\eta_R f \to f$ in $L^p(\R^n)$ as $R \to + \infty$. 
Moreover, since
$\nabla (\eta_R f) = \eta_R \nabla f + f \nabla \eta_R$,
we thus just need to check that $\|f\, \nabla \eta_R\|_{L^1(\R^n;\,\R^n)} \to 0^+$ as $R \to + \infty$. 
Indeed, by H\"older's inequality, we can estimate
\begin{align*}
\|f\,\nabla\eta_R\|_{L^1(\R^n;\,\R^n)}
&=
\int_{B_{2R}\setminus B_R}|f|\,|\nabla\eta_R|
\,dx
\le
\frac2R	
\int_{B_{2R}\setminus B_R}|f|
\,dx
\\
&\le
\frac2R
\,
|B_{2R}\setminus B_R|^{1-\frac1p}
\|f\|_{L^p(B_{2R}\setminus B_R)}
\le
2\,
|B_{2}\setminus B_1|^{1-\frac1p}
\|f\|_{L^p(\R^n\setminus B_R)}
\,
R^{n-1-\frac np}
\end{align*}
and the conclusion immediately follows.
\end{proof}

\subsection{Gagliardo--Nirenberg--Sobolev inequality}

Thanks to the approximation by test functions given by \cref{res:approx_by_test}, we can extend~\cite{CS19}*{Theorem~3.9} and prove the analogue of the Gagliardo--Nirenberg--Sobolev inequality for the space $BV^{\alpha,p}(\R^n)$ whenever
$p\in\left[1,\frac n{n-\alpha}\right)$.

\begin{theorem}[Gagliardo--Nirenberg--Sobolev inequality]
\label{res:GNS_ineq}
Let $\alpha\in(0,1)$ and let
$p\in\left[1,\frac n{n-\alpha}\right)$.
There exists a constant $c_{n,\alpha}>0$, depending on $n$ and $\alpha$ only, such that
\begin{equation*}
\|f\|_{L^{\frac n{n-\alpha},r}(\R^n)}
\le
c_{n,\alpha}
|D^\alpha f|(\R^n)
\end{equation*}
for all $f\in BV^{\alpha,p}(\R^n)$, where $r=+\infty$ if $n=1$ and $r=1$ if $n\ge2$.
As a consequence, $BV^{\alpha,p}(\R^n)\subset L^q(\R^n)$ continuously for all $q\in\left[p,\frac n{n-\alpha}\right)$, with also $q=\frac n{n-\alpha}$ if $n\ge2$.
\end{theorem}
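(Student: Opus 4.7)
The plan is to reduce to the smooth compactly supported case by the approximation result \cref{res:approx_by_test}, apply the second author's optimal Sobolev-type embedding for the fractional gradient (as in~\cites{S19,S20-An}) to the smooth approximants, and then pass to the limit using lower semicontinuity of the Lorentz norm.

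\textbf{Step 1 (optimal inequality for smooth functions).} For any $g\in C^\infty_c(\R^n)$, the optimal estimate proved in~\cite{S20-An} (and \cite{S19} in the case $n=1$) provides a constant $c_{n,\alpha}>0$, depending only on $n$ and $\alpha$, such that
\begin{equation*}
\|g\|_{L^{\frac{n}{n-\alpha},r}(\R^n)}
\le
c_{n,\alpha}\,\|\nabla^\alpha g\|_{L^1(\R^n;\,\R^n)}
=
c_{n,\alpha}\,|D^\alpha g|(\R^n),
\end{equation*}
with $r=\infty$ if $n=1$ and $r=1$ if $n\ge 2$.

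\textbf{Step 2 (approximation and limit).} Given $f\in BV^{\alpha,p}(\R^n)$ with $p\in\left[1,\frac{n}{n-\alpha}\right)$, \cref{res:approx_by_test} yields a sequence $(f_k)_{k\in\N}\subset C^\infty_c(\R^n)$ with $f_k\to f$ in $L^p(\R^n)$ and $|D^\alpha f_k|(\R^n)\to|D^\alpha f|(\R^n)$. Up to a subsequence we may assume $f_k\to f$ pointwise $\Leb{n}$-a.e. By Fatou's lemma applied to the distribution function of $|f|$, the Lorentz quasinorm is lower semicontinuous under a.e.\ convergence, so
\begin{equation*}
\|f\|_{L^{\frac{n}{n-\alpha},r}(\R^n)}
\le
\liminf_{k\to+\infty}\|f_k\|_{L^{\frac{n}{n-\alpha},r}(\R^n)}
\le
c_{n,\alpha}\liminf_{k\to+\infty}|D^\alpha f_k|(\R^n)
=
c_{n,\alpha}\,|D^\alpha f|(\R^n).
\end{equation*}

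\textbf{Step 3 (continuous embedding in Lebesgue spaces).} For $n\ge 2$ we have $r=1$, so the continuous inclusion $L^{\frac{n}{n-\alpha},1}(\R^n)\hookrightarrow L^{\frac{n}{n-\alpha}}(\R^n)$ gives $f\in L^{\frac{n}{n-\alpha}}(\R^n)$ with $\|f\|_{L^{n/(n-\alpha)}}\le C\,|D^\alpha f|(\R^n)$; for $n=1$ we only obtain the weak-$L^{n/(n-\alpha)}$ bound. In either case, combining the Lorentz estimate with the $L^p$ control of $f$ and applying the Lorentz-space H\"older inequality (interpolation between $L^p$ and $L^{\frac{n}{n-\alpha},r}$) yields the continuous inclusion $BV^{\alpha,p}(\R^n)\subset L^q(\R^n)$ for every $q$ in the stated range.

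The main subtlety is not the inequality itself, whose smooth version is essentially taken as an input, but rather justifying the passage to the limit: the approximation \cref{res:approx_by_test} is precisely what is needed, and the constraint $p<\frac{n}{n-\alpha}$ enters only there (to make the cut-off argument work via the scaling $\|\mathcal D^\alpha\eta_R\|_{L^{p'}}=R^{n/p'-\alpha}\|\mathcal D^\alpha\eta_1\|_{L^{p'}}$). Lower semicontinuity of the Lorentz norm along the a.e.-converging subsequence then closes the argument with no further difficulty.
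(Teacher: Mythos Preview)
Your proposal is correct and follows essentially the same approach as the paper: establish the Lorentz estimate for $C^\infty_c$ functions via the optimal embedding results of~\cites{S19,S20-An}, then extend to $BV^{\alpha,p}$ by the approximation \cref{res:approx_by_test} combined with Fatou-type lower semicontinuity. The only cosmetic difference is that the paper spells out the smooth case a bit more explicitly (using the pointwise bound $|f|\le c_{n,\alpha}\,I_\alpha|\nabla^\alpha f|$ together with Hardy--Littlewood--Sobolev for $n=1$, and for $n\ge2$ applying~\cite{S20-An}*{Theorem~1.1} to $F=\nabla^\alpha f$, then recovering $f$ from $I_\alpha\nabla^\alpha f=Rf$ via the Lorentz boundedness of the Riesz transform), whereas you black-box this step as a direct citation.
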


\begin{proof}
Assume that $f\in C^\infty_c(\R^n)$ to start. 
Arguing as in the proof of~\cite{CS19-2}*{Theorem~3.8}, we can estimate $
|f|
\le
c_{n,\alpha}\,
I_\alpha|\nabla^\alpha f|
$
for some constant $c_{n,\alpha}>0$ depending only on~$n$ and~$\alpha$ (possibly varying from line to line).
Thanks to the Hardy--Littlewood--Sobolev inequality, we immediately deduce that 
\begin{equation*}
\|f\|_{L^{\frac n{n-\alpha},\infty}(\R^n)}
\le
c_{n,\alpha}\,
\|\nabla^\alpha f\|_{L^1(\R^n; \R^n)}
\end{equation*}  
for all $f\in C^\infty_c(\R^n)$.
Moreover, if $n\ge2$, then we can apply~\cite{S20-An}*{Theorem~1.1} to the vector field $F=\nabla^\alpha f$ in order to get that 
\begin{equation*}
\|I_\alpha\nabla^\alpha f\|_{L^{\frac n{n-\alpha},1}(\R^n; \R^n)}
\le
c_{n,\alpha}\,
\|\nabla^\alpha f\|_{L^1(\R^n; \R^n)}
\end{equation*}
for all $f\in C^\infty_c(\R^n)$.
Since $I_\alpha\nabla^\alpha f = Rf$ for all $f\in C^\infty_c(\R^n)$ and $R\colon L^{\frac n{n-\alpha},1}(\R^n)\to L^{\frac n{n-\alpha},1}(\R^n; \R^n)$ strongly (recall the definition in~\eqref{eq:def_Riesz_transform} and the properties of the Riesz transform), we immediately deduce that \begin{equation*}
\|f\|_{L^{\frac n{n-\alpha},1}(\R^n)}
\le
c_{n,\alpha}\,
\|\nabla^\alpha f\|_{L^1(\R^n; \R^n)}
\end{equation*}  
for all $f\in C^\infty_c(\R^n)$, with $n\ge2$.
The conclusion then follows by combining a standard approximation argument exploiting \cref{res:approx_by_test} with Fatou's Lemma. 
\end{proof}

For $\alpha=1$, the previous result can be stated as follows.

\begin{proposition}[Alvino's inequality]
\label{res:alvino}
Let $n\in\N$ and $p\in[1,+\infty)$.
If $n\ge2$ and $p\le\frac n{n-1}$, then there exists a dimensional constant $c_n>0$ such that
\begin{equation*}
\|f\|_{L^{\frac n{n-1},1}(\R^n)}
\le
c_n\,|Df|(\R^n)
\end{equation*}
for all $f\in BV^{1,p}(\R^n)$.
If $n=1$, then
$\|f\|_{L^\infty(\R)}
\le
|Df|(\R)$ for all $f\in BV^{1,p}(\R)$.\end{proposition}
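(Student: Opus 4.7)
The plan is to mirror the strategy of \cref{res:GNS_ineq}, but with $\alpha=1$, using the approximation step provided by \cref{res:approx_by_test_BV} in place of \cref{res:approx_by_test}. Accordingly, I would first reduce to the case $f\in C^\infty_c(\R^n)$. For $n\ge2$ with $p\le\frac{n}{n-1}$ and for $n=1$ with any $p\in[1,+\infty)$, \cref{res:approx_by_test_BV} gives a sequence $(f_k)_{k\in\N}\subset C^\infty_c(\R^n)$ with $f_k\to f$ in $L^p(\R^n)$ and $|Df_k|(\R^n)\to|Df|(\R^n)$. Up to subsequences we may also assume $f_k\to f$ pointwise a.e., so that once the claimed inequality is known on test functions, Fatou's Lemma (applied to the Lorentz quasinorm via the decreasing rearrangement, as in the end of the proof of \cref{res:GNS_ineq}) yields the statement for $f$.

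For the main estimate on $f\in C^\infty_c(\R^n)$ with $n\ge2$, I would apply~\cite{S20-An}*{Theorem~1.1} directly to the vector field $F=\nabla f\in C^\infty_c(\R^n;\R^n)$, which has the required integrability. This gives
\begin{equation*}
\|I_1\nabla f\|_{L^{\frac{n}{n-1},1}(\R^n;\,\R^n)}
\le
c_n\,\|\nabla f\|_{L^1(\R^n;\,\R^n)}.
\end{equation*}
Since $I_1\nabla f=Rf$ on $C^\infty_c(\R^n)$, and since $\sum_{i=1}^n R_i^2=-\mathrm{Id}$, by writing $f=-\sum_{i=1}^n R_i(R_if)$ and using the continuity of each $R_i$ on the Lorentz space $L^{\frac{n}{n-1},1}(\R^n)$, I obtain
\begin{equation*}
\|f\|_{L^{\frac{n}{n-1},1}(\R^n)}
\le
c_n\,\|Rf\|_{L^{\frac{n}{n-1},1}(\R^n;\,\R^n)}
\le
c_n\,\|\nabla f\|_{L^1(\R^n;\,\R^n)}
=
c_n\,|Df|(\R^n),
\end{equation*}
which is the desired bound on test functions.

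For $n=1$, after reducing to $f\in C^\infty_c(\R)$ as above, the one-dimensional fundamental theorem of calculus gives
\begin{equation*}
|f(x)|
=
\abs*{\int_{-\infty}^x f'(t)\,dt}
\le
\|f'\|_{L^1(\R)}
=
|Df|(\R),
\quad
x\in\R,
\end{equation*}
so $\|f\|_{L^\infty(\R)}\le|Df|(\R)$ on test functions. To pass to the general $f\in BV^{1,p}(\R)$, I would use that the approximating sequence $(f_k)_{k\in\N}$ is uniformly bounded in $L^\infty(\R)$ (by $|Df|(\R)+1$, say, for large $k$) and converges to $f$ in $L^p(\R)$: extracting a weak*-convergent subsequence in $L^\infty(\R)$, the limit must agree with $f$ a.e., and the lower semicontinuity of the $L^\infty$-norm under weak*-convergence yields $\|f\|_{L^\infty(\R)}\le\liminf_k\|f_k\|_{L^\infty(\R)}\le|Df|(\R)$.

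The only genuine obstacle I foresee is making sure the constraint $p\le\frac{n}{n-1}$ is exactly what is needed to trigger \cref{res:approx_by_test_BV}, and checking that the Lorentz-space Fatou argument used in \cref{res:GNS_ineq} transfers verbatim; both are essentially routine, so the proof should be short and follow the template already established in \cref{res:GNS_ineq}.
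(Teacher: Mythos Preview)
Your argument is correct, but it takes a more roundabout route than the paper's. The paper simply invokes Alvino's original inequality~\cite{A77}, which is already stated and proved for $BV(\R^n)$, and then uses \cref{res:approx_by_test_BV} to pass from $BV(\R^n)$ (in particular, from $C^\infty_c(\R^n)$) to $BV^{1,p}(\R^n)$; the case $n=1$ is dismissed as a classical property of $BV$ functions on the line. You instead re-derive the inequality on test functions by applying~\cite{S20-An}*{Theorem~1.1} to $F=\nabla f$ and inverting the Riesz transform via $\sum_i R_i^2=-\mathrm{Id}$, exactly as in the proof of \cref{res:GNS_ineq} with $\alpha=1$. This works and has the virtue of being internally uniform with the fractional argument, but it is somewhat circular in spirit: \cite{S20-An}*{Theorem~1.1} is a far-reaching generalization of Alvino's inequality, so you are using a harder and more recent result to recover the classical one that the paper just cites. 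For the $n=1$ passage to the limit, note that pointwise a.e.\ convergence of $(f_k)$ (from $L^p$ convergence) together with $\|f_k\|_{L^\infty(\R)}\le|Df_k|(\R)\to|Df|(\R)$ already gives $\|f\|_{L^\infty(\R)}\le|Df|(\R)$ directly, without the weak-$*$ detour.
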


\begin{proof}
While the case $n=1$ is a well-known property of functions having bounded variation, the case $n\ge2$ follows from Alvino's inequality~\cite{A77} for functions in~$BV(\R^n)$ (also see~\cite{S20-New}*{Section~5}) in combination with \cref{res:approx_by_test_BV}.
We leave the details to the interested reader.
\end{proof}

\subsection{The space  \texorpdfstring{$S^{\alpha,p}(\R^n)$}{Sˆ(alpha,p)(Rˆn)} and the embedding \texorpdfstring{$BV^{\alpha,p}\subset S^{\beta,q}$}{of BVˆ(alpha,p) into Sˆ(beta,q)}}

Let $\alpha\in(0,1)$ and $p\in[1,+\infty]$.
We define the \emph{weak fractional $\alpha$-gradient} of a function $f\in L^p(\R^n)$ as the function $\nabla^\alpha f\in L^1_{\loc}(\R^n;\R^n)$ satisfying
\begin{equation*}
\int_{\R^n}f\,\div^\alpha\phi\, dx
=-\int_{\R^n}\nabla^\alpha f\cdot\phi\, dx
\end{equation*} 
for all $\phi\in C^\infty_c(\R^n;\R^n)$. 
We hence let the linear space 
\begin{equation*}S^{\alpha,p}(\R^n)
=
\set*{f\in L^p(\R^n) : \exists\, \nabla^\alpha f \in L^p(\R^n;\R^n)}
\end{equation*}
endowed with the norm
\begin{equation*}
\|f\|_{S^{\alpha,p}(\R^n)}
=
\|f\|_{L^p(\R^n)}+\|\nabla^\alpha f\|_{L^p(\R^n;\,\R^{n})},
\quad
f\in S^{\alpha,p}(\R^n),
\end{equation*}
be the \emph{distributional fractional Sobolev space}. 

As shown in~\cite{CS19}*{Proposition~3.20}, ($S^{\alpha,p}(\R^n)$,$\|\cdot\|_{S^{\alpha,p}(\R^n)}$) is a Banach space for all $p\in[1,+\infty]$.
Thanks to~\cite{CS19}*{Theorem~3.23} for $p=1$ and to~\cite{BCCS20}*{Theorem~A.1} for $p\in(1,+\infty)$ (we refer the reader also to~\cite{KS21}*{Theorem~2.7} for a simpler proof), the set $C^\infty_c(\R^n)$ is dense in $S^{\alpha,p}(\R^n)$. 
As a consequence, for $p\in(1,+\infty)$ it is possible to identify $S^{\alpha,p}(\R^n)$ with \emph{the fractional Bessel potential space $L^{\alpha,p}(\R^n)$}, see~\cite{BCCS20}*{Corollary~2.1} and the discussion therein.

We now want to provide a rigorous formulation of the na\"ive intuition that
\begin{equation*}
\text{\textit{if the order of differentiability decreases, then the order of integrability increases},}	
\end{equation*}
 that is to say, if $\nabla^\alpha f\in L^p(\R^n;\R^n)$ for some $\alpha\in(0,1)$ and $p\in[1,+\infty)$, then $\nabla^\beta f\in L^q(\R^n;\R^n)$ for some \emph{lower} fractional differentiation order $\beta<\alpha$ and some \emph{higher} integrability exponent~\mbox{$q>p$} (depending on $\beta$).

For $p>1$, the above principle is a simple consequence of the known embedding theorems between the fractional Bessel potential spaces, thanks to the aforementioned identification $S^{\alpha,p}(\R^n)=L^{\alpha,p}(\R^n)$.

The more delicate case $p=1$ is covered in
\cref{res:BV_alpha_p_in_S_beta_q} below. 
We refer the reader also to~\cite{CS19}*{Theorem~3.32} and to~\cite{CS19-2}*{Propositions~3.2(i), 3.3 and~3.12} for similar results in this direction.

\begin{theorem}[$BV^{\alpha,p}\subset S^{\beta,q}$ for $p<\frac n{n-\alpha}$]
\label{res:BV_alpha_p_in_S_beta_q}
Let $\alpha,\beta\in(0,1]$, with $\beta<\alpha$, and let $p, q \in[1,+\infty]$ be such that  $p \le q < \frac{n}{n+\beta-\alpha}$.
Then $BV^{\alpha,p}(\R^n)\subset S^{\beta,q}(\R^n)$ continuously.
\end{theorem}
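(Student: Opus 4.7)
The plan is to approximate $f$ by smooth compactly supported functions, derive a uniform $L^q$ bound on $\nabla^\beta$ for that class, and then identify the weak fractional $\beta$-gradient as an $L^q$ function. First, since $\beta<\alpha$ forces $p\le q<\tfrac{n}{n-\alpha}$, \cref{res:approx_by_test} (or \cref{res:approx_by_test_BV} if $\alpha=1$) yields $(f_k)_{k\in\N}\subset C^\infty_c(\R^n)$ with $f_k\to f$ in $L^p(\R^n)$ and $|D^\alpha f_k|(\R^n)\to|D^\alpha f|(\R^n)$, while \cref{res:GNS_ineq} (or \cref{res:alvino}) provides a uniform bound on $\|f_k\|_{L^q(\R^n)}$.

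On smooth compactly supported functions, the standard identity $\nabla^\gamma g=I_{1-\gamma}(\nabla g)$ for $\gamma\in(0,1]$, combined with the semigroup property \eqref{eq:Riesz_potential_semigroup} applied to $1-\beta=(\alpha-\beta)+(1-\alpha)$, gives the key formula
\begin{equation*}
\nabla^\beta f_k=I_{\alpha-\beta}(\nabla^\alpha f_k)\qquad\text{in }\R^n.
\end{equation*}
I split the Riesz kernel $K(z)=c_{n,\alpha-\beta}|z|^{\alpha-\beta-n}$ of $I_{\alpha-\beta}$ as $K=K_{\mathrm{loc}}+K_{\mathrm{tail}}$ by a smooth cutoff at scale $1$, so that $K_{\mathrm{loc}}$ is supported in $B_1$ while $K_{\mathrm{tail}}\in C^\infty(\R^n)$ is bounded with bounded gradient and $|K_{\mathrm{tail}}(z)|\lesssim|z|^{\alpha-\beta-n}$ at infinity. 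The local part is controlled by Young's inequality,
\begin{equation*}
\|K_{\mathrm{loc}}*\nabla^\alpha f_k\|_{L^q(\R^n;\R^n)}\le\|K_{\mathrm{loc}}\|_{L^q}\,|D^\alpha f_k|(\R^n),
\end{equation*}
where $\|K_{\mathrm{loc}}\|_{L^q}<+\infty$ precisely because $q<\tfrac n{n-\alpha+\beta}$. For the tail I perform a fractional integration by parts that transfers the $\alpha$-derivative onto the smooth kernel: extending the $\Lip_c$--$\Lip_c$ duality of \cref{sec:preliminaries} to $K_{\mathrm{tail}}(x-\cdot)\in\Lip_b(\R^n)$ by truncation at scale $R$ and passing $R\to+\infty$, one obtains
\begin{equation*}
K_{\mathrm{tail}}*\nabla^\alpha f_k=(\nabla^\alpha K_{\mathrm{tail}})*f_k.
\end{equation*}
A direct estimate of the singular integral representation of $\nabla^\alpha K_{\mathrm{tail}}$, splitting the domain into $\{|z|<|x|/2\}$ and $\{|z|>|x|/2\}$, yields the decay $|\nabla^\alpha K_{\mathrm{tail}}(x)|\lesssim(1+|x|)^{-(n+\beta)}$, so $\nabla^\alpha K_{\mathrm{tail}}\in L^r(\R^n;\R^n)$ for every $r>n/(n+\beta)$. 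Choosing $\tfrac1r=\tfrac1q+1-\tfrac1p$, which satisfies $1\le r\le+\infty$ since $p\le q$ and $\tfrac1r<\tfrac{n+\beta}{n}$ since $\tfrac1q-\tfrac1p\le0<\tfrac\beta n$, Young's inequality produces $\|K_{\mathrm{tail}}*\nabla^\alpha f_k\|_{L^q}\le C\|f_k\|_{L^p}$. Adding the two contributions gives the uniform bound $\|\nabla^\beta f_k\|_{L^q(\R^n;\R^n)}\le C\bigl(\|f_k\|_{L^p(\R^n)}+|D^\alpha f_k|(\R^n)\bigr)$.

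To read off the embedding, I test against $\phi\in C^\infty_c(\R^n;\R^n)$: using $f_k\to f$ in $L^p$ together with $\div^\beta\phi\in L^{p'}(\R^n)$ (by \cite{CS19}*{Corollary~2.3}), I pass to the limit in $\int f_k\,\div^\beta\phi\,dx=-\int\phi\cdot\nabla^\beta f_k\,dx$ and derive $|\int f\,\div^\beta\phi\,dx|\le C\|\phi\|_{L^{q'}(\R^n;\R^n)}\|f\|_{BV^{\alpha,p}}$. For $q>1$ the Riesz representation on $L^{q'}$ produces the desired $\nabla^\beta f\in L^q(\R^n;\R^n)$; for $q=1$ (which forces $p=1$), running the same argument with an auxiliary $q_0\in(1,\tfrac n{n+\beta-\alpha})$ yields a uniform $L^{q_0}$-bound on $\nabla^\beta f_k$, providing uniform integrability together with the uniform $L^1$-bound and hence an $L^1$ limit by Dunford--Pettis. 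The main obstacle is the tail estimate: since $q<\tfrac n{n+\beta-\alpha}$, the tail kernel $K_{\mathrm{tail}}$ is \emph{not} in $L^q$, so a direct Young's inequality against $\nabla^\alpha f_k\in L^1$ is insufficient; the delicate ingredient is precisely the fractional integration by parts that shifts the $\alpha$-derivative onto the smooth kernel, trading its poor integrability for a bound controlled by $\|f_k\|_{L^p}$.
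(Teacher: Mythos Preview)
Your argument follows the same architecture as the paper's proof: reduce to $f\in C^\infty_c(\R^n)$ via \cref{res:approx_by_test}, write $\nabla^\beta f_k=I_{\alpha-\beta}\nabla^\alpha f_k$, split the Riesz kernel into a local and a tail piece, control the local piece by Young's inequality (this is exactly where the hypothesis $q<\frac{n}{n+\beta-\alpha}$ enters), and handle the tail by moving the $\alpha$-derivative off $f_k$. The execution of the tail estimate differs: the paper uses a \emph{sharp} cutoff at a variable radius~$R$, rewrites the tail via the identity from~\cite{CS19-2}*{Proposition~3.12} involving $D^\alpha\chi_{B_1}$ to get a bound $cR^{-\beta}\|f\|_{L^q}$, and then optimizes over~$R$ to obtain the multiplicative interpolation inequality
\[
\|\nabla^\beta f\|_{L^q}\le c\,\|f\|_{L^q}^{1-\theta}\,\|\nabla^\alpha f\|_{L^1}^{\theta}.
\]
Your route---a fixed smooth cutoff and a direct pointwise decay estimate $|\nabla^\alpha K_{\mathrm{tail}}(x)|\lesssim(1+|x|)^{-(n+\beta)}$, followed by Young with the exponent $\tfrac1r=1+\tfrac1q-\tfrac1p$---is a clean alternative that avoids the external identity and gives the additive bound directly in terms of $\|f_k\|_{L^p}$ rather than $\|f_k\|_{L^q}$. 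Both approaches buy the same embedding; the paper's yields the sharper interpolation form as a byproduct.

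One point needs tightening. For $q=1$ you invoke Dunford--Pettis, arguing that the $L^{q_0}$ bound provides uniform integrability. On $\R^n$ this requires not only equi-integrability (which the $L^{q_0}$ bound gives) but also \emph{tightness}, i.e.\ uniform smallness of $\int_{\{|x|>R\}}|\nabla^\beta f_k|$, and the $L^{q_0}$ bound alone does not supply this. The fix is cheap: from the uniform $L^1$ bound, $(\nabla^\beta f_k)$ has a weak-$*$ limit $\mu\in\M(\R^n;\R^n)$ with $|\mu|(\R^n)\le\liminf_k\|\nabla^\beta f_k\|_{L^1}$; the $L^{q_0}$ argument (with $q_0>1$) identifies $\mu=g\,\Leb n$ for some $g\in L^{q_0}$; hence $\|g\|_{L^1}=|\mu|(\R^n)<+\infty$, so $\nabla^\beta f=g\in L^1$ as required.
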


\begin{proof}
Assume that $f\in C^\infty_c(\R^n)$ and let $R>0$.
Arguing as in the proof of~\cite{CS19-2}*{Proposition~3.12}, we can estimate
\begin{align*}
|\nabla^\beta f|(x)
\le
\frac{\mu_{n,1+\beta-\alpha}}{n+\beta-\alpha}\,
\bigg(
\int_{|h|<R}
\frac{|\nabla^\alpha f|(x+h)}{|h|^{n+\beta-\alpha}}\,dh
+
\bigg|
\int_{|h|\ge R}
\frac{\nabla^\alpha f(x+h)}{|h|^{n+\beta-\alpha}}\,dh\,
\bigg|\,
\bigg)
\end{align*}
for all $x\in\R^n$. 
On the one side, we can write 
\begin{align*}
\int_{|h|<R}
\frac{|\nabla^\alpha f|(x+h)}{|h|^{n+\beta-\alpha}}\,dh
=
\left(
\frac{\chi_{B_R}}{|\cdot|^{n+\beta-\alpha}}
*
|\nabla^\alpha f|\right)(x)
\end{align*}
for all $x\in\R^n$, so that
\begin{align*}
\bigg\|
\int_{|h|<R}
\frac{|\nabla^\alpha f|(x+h)}{|h|^{n+\beta-\alpha}}\,dh
\,
\bigg\|_{L^q(\R^n)}
&=
\bigg\|
\frac{\chi_{B_R}}{|\cdot|^{n+\beta-\alpha}}
*
|\nabla^\alpha f|\,
\bigg\|_{L^q(\R^n)}
\\
&\le
\left (\frac{n\omega_n}{n - (n + \beta - \alpha)q} \right )^{1/q}
\,
R^{\frac nq - (n+\beta-\alpha)}\,
\|\nabla^\alpha f\|_{L^1(\R^n;\,\R^n)}
\end{align*}
by Young's inequality. 
On the other side, arguing as in the proof of~\cite{CS19-2}*{Proposition~3.12}, we can write
\begin{align*}
\int_{|h|\ge R}
\frac{\nabla^\alpha f(x+h)}{|h|^{n+\beta-\alpha}}\,dh
=
\int_{\R^n}
\bigg(
\frac{f(x+Ry)}{R^\beta}
-
(n+\beta-\alpha)
\int_R^{+\infty}
\frac{f(x+ry)}{r^{\beta+1}}
\,dr
\bigg)\,
d D^\alpha\chi_{B_1}(y)
\end{align*}
for all $x\in\R^n$, so that 
\begin{align*}
\bigg\|
\int_{|h|\ge R}
\frac{\nabla^\alpha f(\cdot+h)}{|h|^{n+\beta-\alpha}}\,dh\,
\bigg\|_{L^q(\R^n;\,\R^n)}
\le
c_{n,\alpha,\beta}\,
R^{-\beta}\,
\|f\|_{L^q(\R^n)}
\end{align*}
by Minkowski's integral inequality, for some constant $c_{n,\alpha,\beta}>0$ depending only on~$n$, $\alpha$ and~$\beta$.
Hence we get that 
\begin{equation*}
\|\nabla^\beta f\|_{L^q(\R^n;\,\R^n)}
\le
c_{n,\alpha,\beta,q}
\left(
R^{\frac nq - (n+\beta-\alpha)}\,
\|\nabla^\alpha f\|_{L^1(\R^n;\,\R^n)}
+
R^{-\beta}\,
\|f\|_{L^q(\R^n)}
\right)
\end{equation*}
whenever $R>0$, for some constant $c_{n,\alpha,\beta,q}>0$ depending only on~$n$, $\alpha$, $\beta$ and~$q$. 
Choosing 
$R
=
\left(
\frac{\|f\|_{L^q(\R^n)}}{\|\nabla^\alpha f\|_{L^1(\R^n; \R^n)}}\right)^{\frac{1}{\frac nq-n+\alpha}}$, we get that 
\begin{equation*}
\|\nabla^\beta f\|_{L^q(\R^n;\,\R^n)}
\le
c_{n,\alpha,\beta,q}
\,
\|f\|_{L^q(\R^n)}^{1-\frac{\beta q}{n-q(n-\alpha)}}\,
\|\nabla^\alpha f\|_{L^1(\R^n;\,\R^n)}^{\frac{\beta q}{n-q(n-\alpha)}}
\end{equation*}
for all $f\in C^\infty_c(\R^n)$. 
The conclusion thus follows from \cref{res:approx_by_test} and \cref{res:GNS_ineq} (\cref{res:approx_by_test_BV} and  \cref{res:alvino} in the case $\alpha =1$) via a routine approximation argument, since clearly $p < \frac{n}{n - \alpha}$. 
\end{proof}

\subsection{Generalized integration-by-parts formula for \texorpdfstring{$BV^{\alpha,p}$}{BVˆ(alpha,p)} functions}

The following result is a generalization of the fractional integration-by-parts formula \eqref{eqi:BV_alpha_p_duality} (the case $p=1$ was actually already analyzed in~\cite{CS19-2}*{Proposition~2.7}).
This result will be useful for integrating by parts  $BV^{\alpha,p}$ functions on balls, see \cref{res:int_by_parts_E_ball} below.

\begin{proposition}[$W^{1,q}\cap C_b$-regular test] \label{res:Lip_test_BV_alpha_p}
Let $\alpha \in (0, 1)$ and let $p,q \in [1,+\infty]$ be such that $\frac1p+\frac1q=1$.
If $f \in BV^{\alpha, p}(\R^n)$, then
\begin{equation}
\label{eq:BV_alpha_p_duality}
\int_{\R^{n}} f\, \div^{\alpha} \phi \, dx 
= 
- \int_{\R^{n}} \phi \cdot d D^{\alpha} f
\end{equation}
for all $\phi\in W^{1,q}(\R^n;\R^n)\cap C_b(\R^n;\R^n)$ if $q > 1$, and for all $\phi\in BV(\R^n;\R^n)\cap C_b(\R^n;\R^n)$ if $q=1$.
\end{proposition}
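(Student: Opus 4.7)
The plan is a standard approximation-and-passage-to-the-limit argument. I would fix a family of smooth cut-offs $(\eta_k)_{k\in\N}\subset C^\infty_c(\R^n)$ as in the proof of \cref{res:approx_by_test} and a family $(\rho_{1/k})_{k\in\N}$ of standard mollifiers, and set $\phi_k=(\eta_k\phi)*\rho_{1/k}\in C^\infty_c(\R^n;\R^n)$. Applying the already known identity~\eqref{eqi:BV_alpha_p_duality} to each $\phi_k$ yields
\begin{equation*}
\int_{\R^n} f\,\div^\alpha\phi_k\,dx=-\int_{\R^n}\phi_k\cdot dD^\alpha f,
\end{equation*}
and the aim is to let $k\to+\infty$ on both sides. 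Since $\phi\in C_b(\R^n;\R^n)$, one has $\phi_k\to\phi$ pointwise on $\R^n$ with $\|\phi_k\|_{L^\infty(\R^n;\,\R^n)}\le\|\phi\|_{L^\infty(\R^n;\,\R^n)}$, so the Dominated Convergence Theorem and the finiteness of $|D^\alpha f|$ immediately give the convergence of the right-hand side to $-\int_{\R^n}\phi\cdot dD^\alpha f$.

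The substance of the proof is the passage to the limit on the left-hand side, which I would base on the estimate
\begin{equation*}
\|\div^\alpha\psi\|_{L^q(\R^n)}\le C_{n,\alpha}\left(\|\psi\|_{L^q(\R^n;\,\R^n)}+\|\nabla\psi\|_{L^q(\R^n;\,\R^{n\times n})}\right)
\end{equation*}
valid for all $\psi\in C^\infty_c(\R^n;\R^n)$ and every $q\in[1,+\infty)$. This should be established via the splitting
\begin{equation*}
\div^\alpha\psi(x)
=
\mu_{n,\alpha}\int_{|h|<1}\frac{h\cdot(\psi(x+h)-\psi(x))}{|h|^{n+\alpha+1}}\,dh
+
\mu_{n,\alpha}\int_{|h|\ge 1}\frac{h\cdot\psi(x+h)}{|h|^{n+\alpha+1}}\,dh,
\end{equation*}
where the constant term $\psi(x)$ in the long-range piece has been dropped thanks to the odd symmetry $\int_{|h|\ge1}h\,|h|^{-n-\alpha-1}\,dh=0$. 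The long-range contribution is a convolution of $\psi$ with a kernel in $L^1(\R^n;\R^n)$, hence is controlled by $C_{n,\alpha}\|\psi\|_{L^q}$ via Young's inequality. The short-range contribution can be bounded via the Taylor identity $\psi(x+h)-\psi(x)=\int_0^1 \nabla\psi(x+th)h\,dt$ together with Minkowski's integral inequality, giving an estimate of the form $C_{n,\alpha}\|\nabla\psi\|_{L^q}$ since $\int_{|h|<1}|h|^{1-n-\alpha}\,dh<+\infty$ thanks to $\alpha<1$.

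With this estimate at hand, for $q\in(1,+\infty)$ standard arguments (using $\phi\in L^q\cap L^\infty$ to make the cut-off commutator $\phi\,\nabla\eta_k$ vanish in $L^q$) yield $\phi_k\to\phi$ in $W^{1,q}(\R^n;\R^n)$, from which $\div^\alpha\phi_k\to\div^\alpha\phi$ in $L^q(\R^n)$ follows, and then H\"older's inequality gives the convergence of the left-hand side. The case $q=+\infty$ (equivalently $p=1$) is already contained in~\cite{CS19-2}*{Proposition~2.7}. The hard part will be the case $q=1$: the analogous bound $\|\div^\alpha\psi\|_{L^1}\le C_{n,\alpha}(\|\psi\|_{L^1}+|D\psi|(\R^n))$ still holds for $\psi\in C^\infty_c$ (estimating $|\psi(x+h)-\psi(x)|$ in terms of $|D\psi|$ on the segment from $x$ to $x+h$), but since $C^\infty_c$ is not dense in $BV(\R^n;\R^n)$ in any norm sense, one has to exploit only the strict convergence $\phi_k\to\phi$ in $L^1$ with $|D\phi_k|(\R^n)\to|D\phi|(\R^n)$ and carefully identify the weak limit of the $L^1$-bounded sequence $(\div^\alpha\phi_k)$ with the pointwise a.e.\ defined $\div^\alpha\phi$ to conclude.
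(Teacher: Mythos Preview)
For $q\in(1,+\infty]$ your plan works: the estimate $\|\div^\alpha\psi\|_{L^q}\le C_{n,\alpha}\|\psi\|_{W^{1,q}}$ together with $\phi_k\to\phi$ in $W^{1,q}(\R^n;\R^n)$ yields $\div^\alpha\phi_k\to\div^\alpha\phi$ in $L^q$, and the passage to the limit is immediate. The genuine gap is in the case $q=1$. Boundedness of $(\div^\alpha\phi_k)_{k\in\N}$ in $L^1(\R^n)$ does \emph{not} give weak $L^1$-compactness---by Dunford--Pettis that would require uniform integrability, which you have not established---so at best you extract a subsequence converging weakly-$*$ in $\M(\R^n)$. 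But weak-$*$ measure convergence can only be tested against $C_0$ functions, not against an arbitrary $f\in L^\infty(\R^n)$, and hence you cannot conclude that $\int_{\R^n}f\,\div^\alpha\phi_k\,dx$ converges to the desired limit. Strict $BV$ convergence is too weak to upgrade your bound to $L^1$ convergence of $\div^\alpha\phi_k$, since $|D(\phi_k-\phi)|(\R^n)\not\to0$.

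The paper sidesteps this by decoupling the two approximations and exploiting a commutation identity. First, one knows from~\cite{CS19-2}*{Propositions~3.2 and~3.3} that $\div^\alpha\phi\in L^1(\R^n)$ already holds for $\phi\in BV(\R^n;\R^n)$. Mollification alone then gives $\div^\alpha(\rho_\eps*\phi)=\rho_\eps*\div^\alpha\phi\to\div^\alpha\phi$ in $L^1(\R^n)$---genuine norm convergence, no compactness argument needed. This reduces matters to test fields $\phi\in W^{1,1}\cap\Lip_b\cap C^\infty$, and the cut-off is then handled via the non-local Leibniz splitting $\div^\alpha(\eta_R\phi)=\eta_R\,\div^\alpha\phi+\phi\cdot\nabla^\alpha\eta_R+\div^\alpha_{\rm NL}(\eta_R,\phi)$, where each term is paired with~$f$ and shown to converge by dominated convergence. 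Your combined approximation $\phi_k=(\eta_k\phi)*\rho_{1/k}$ forfeits the identity $\div^\alpha(\rho_\eps*\phi)=\rho_\eps*\div^\alpha\phi$, which is precisely the device that makes the $q=1$ case go through.
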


\begin{proof} 
We start by noticing that, under the above assumptions, $\div^\alpha\phi \in L^{q}(\R^n)$, thanks to \cite{CS19-2}*{Propositions 3.2 and 3.3}. Then, we divide the proof into two steps and adopt the same strategy of~\cite{CS19}*{Theorem~3.8} and~\cite{CS19-2}*{Proposition~2.7} with minor modifications.

\smallskip

\textit{Step~1}. 
Assume 
$\phi\in W^{1,q}(\R^n;\R^n)\cap\Lip_b(\R^n;\R^n)\cap C^\infty(\R^n; \R^n)$
and let $(\eta_R)_{R>0}\subset C^\infty_c(\R^n)$ be a family of cut-off functions as in~\cite{CS19}*{Section~3.3}.
On the one hand, since
\begin{equation*}
\abs*{\int_{\R^n}f\eta_R\,\div^\alpha\phi\,dx-\int_{\R^n}f\,\div^\alpha\phi\,dx}
\le
\|\div^\alpha\phi\|_{L^q(\R^n)}
\|f(1-\eta_R)\|_{L^p(\R^n)}
\end{equation*}
for all $R>0$, by Lebesgue's Dominated Convergence Theorem we have
\begin{equation*}
\lim_{R\to+\infty}\int_{\R^n}f\eta_R\,\div^\alpha\phi\,dx
=\int_{\R^n}f\,\div^\alpha\phi\,dx.
\end{equation*}
On the other hand, by~\cite{CS19-2}*{Lemmas~2.2 and~2.5} we can write
\begin{align*}
\int_{\R^n}f\eta_R\,\div^\alpha\phi\,dx
=\int_{\R^n}f\,\div^\alpha(\eta_R\phi)\,dx
-\int_{\R^n}f\,\phi\cdot\nabla^\alpha\eta_R\,dx
-\int_{\R^n}f\,\div^\alpha_{\rm NL}(\eta_R,\phi)\,dx
\end{align*} 
for all $R>0$.
Since $\phi\eta_R\in C^\infty_c(\R^n; \R^n)$,  \eqref{eq:BV_alpha_p_duality} implies that
\begin{equation*}
\int_{\R^n}f\,\div^\alpha(\eta_R\phi)\,dx
=-\int_{\R^n}\eta_R\phi\cdot dD^\alpha f
\end{equation*}
for all $R>0$.
Since
\begin{equation*}
\abs*{
\int_{\R^n}\eta_R\phi\cdot dD^\alpha f
-
\int_{\R^n}\phi\cdot dD^\alpha f
}
\le
\|\phi\|_{L^\infty(\R^n;\R^n)}\int_{\R^n}(1-\eta_R)\,d|D^\alpha f|
\end{equation*}
for all $R>0$, by Lebesgue's Dominated Convergence Theorem (with respect to the finite measure~$|D^\alpha f|$) we have
\begin{equation*}
\lim_{R\to+\infty}\int_{\R^n}\eta_R\,\phi\cdot dD^\alpha f
=\int_{\R^n}\phi\cdot dD^\alpha f.
\end{equation*}
Finally, on the one side we can estimate
\begin{align*}
\abs*{
\int_{\R^n}f\,\phi\cdot\nabla^\alpha\eta_R\,dx
}
\le
\mu_{n,\alpha}
\int_{\R^n}|f(x)|\,|\phi(x)|\int_{\R^n}\frac{|\eta_R(y)-\eta_R(x)|}{|y-x|^{n+\alpha}}\,dy\,dx
\end{align*}
for all $R>0$, while, on the other side,
\begin{equation*}
\abs*{
\int_{\R^n}f\,\div^\alpha_{\rm NL}(\eta_R,\phi)\,dx
}
\le
\mu_{n,\alpha}
\int_{\R^n}|f(x)|
\int_{\R^n}\left |\eta_R(y)-\eta_R(x) \right| \, \frac{|\phi(y)-\phi(x)|}{|y-x|^{n+\alpha}}\,dy\,dx
\end{equation*}
for all $R>0$. 
We claim that 
\begin{equation}
\label{eq:claim_DCT_1}
\lim_{R\to+\infty}
\int_{\R^n}|f(x)|\,|\phi(x)|\int_{\R^n}\frac{|\eta_R(y)-\eta_R(x)|}{|y-x|^{n+\alpha}}\,dy\,dx
=0.
\end{equation}
Indeed, $f\phi\in L^1(\R^n;\R^n)$ and~\eqref{eq:claim_DCT_1} follows by Lebesgue's Dominated Convergence Theorem.
We also claim that
\begin{equation}
\label{eq:claim_DCT_2}
\lim_{R\to+\infty}
\int_{\R^n}|f(x)|
\int_{\R^n}\left |\eta_R(y)-\eta_R(x) \right| \,\frac{|\phi(y)-\phi(x)|}{|y-x|^{n+\alpha}}\,dy
\,dx
=0.
\end{equation}
Indeed, since $\phi\in\Lip_b(\R^n;\R^n)$ and $\|\eta_R\|_{L^\infty(\R^n)}\le1$ for all $R>0$, by Lebesgue's Dominated Convergence Theorem we get that
\begin{equation}
\label{eq:scacco}
\lim_{R\to+\infty}
\int_{\R^n}\left |\eta_R(y)-\eta_R(x) \right| \,\frac{|\phi(y)-\phi(x)|}{|y-x|^{n+\alpha}}\,dy
=0
\end{equation}
for all $x\in\R^n$. 
Moreover, for a.e.\ $x\in\R^n$ we have
\begin{equation}
\label{eq:matto}
\int_{\R^n}\left |\eta_R(y)-\eta_R(x) \right| \,\frac{|\phi(y)-\phi(x)|}{|y-x|^{n+\alpha}}\,dy
\le
2\,\mathcal D^\alpha\phi(x).
\end{equation}
Therefore, combining~\eqref{eq:scacco} and~\eqref{eq:matto}, again by Lebesgue's Dominated Convergence Theorem we get~\eqref{eq:claim_DCT_2}.
Thus~\eqref{eq:BV_alpha_p_duality} is proved whenever 
$\phi\in W^{1,q}(\R^n;\R^n)\cap\Lip_b(\R^n;\R^n)\cap C^\infty(\R^n; \R^n)$.

\smallskip

\textit{Step~2}.
Now assume 
$\phi\in W^{1,q}(\R^n;\R^n)\cap C_b(\R^n;\R^n)$
(if $q=1$, then we instead take
$\phi\in BV(\R^n;\R^n)\cap C_b(\R^n;\R^n)$)
and let $(\rho_\eps)_{\eps>0}\subset C^\infty_c(\R^n)$ be a family of standard mollifiers as in~\cite{CS19}*{Section~3.3}. 
Then 
$\phi_\eps=\rho_\eps*\phi\in W^{1,q}(\R^n;\R^n)\cap\Lip_b(\R^n;\R^n)\cap C^\infty(\R^n; \R^n)$
and so, by Step~1, we can write
\begin{equation*}
\int_{\R^{n}} f\, \div^{\alpha} \phi_\eps \, dx 
= 
- \int_{\R^{n}} \phi_\eps \cdot d D^{\alpha} f
\end{equation*}
for all $\eps>0$.
On the one hand, it is not difficult to see that 
$\div^\alpha\phi_\eps=\rho_\eps*\div^\alpha\phi$ for all~$\eps>0$, so that
\begin{equation*}
\lim_{\eps\to0^+}
\int_{\R^{n}} f\, \div^{\alpha} \phi_\eps \, dx 
=
\int_{\R^{n}} f\, \div^{\alpha} \phi \, dx. 
\end{equation*} 
On the other hand, since $\phi\in C_b(\R^n;\R^n)$, we get
\begin{equation*}
\lim_{\eps\to0^+}
\int_{\R^{n}} |\phi_\eps-\phi| \, d |D^{\alpha} f|
=0
\end{equation*} 
by Lebesgue's Dominated Convergence Theorem (with respect to the finite measure $|D^\alpha f|$). 
This concludes the proof of~\eqref{eq:BV_alpha_p_duality}.
\end{proof}

\subsection{Integrability of \texorpdfstring{$\mathcal D^\alpha$}{Dˆalpha} in Lorentz space for \texorpdfstring{$BV^{1,p}$}{BVˆ(1,p)} functions}

We now need to focus on the weak integrability properties of the operator $\mathcal D^\alpha$ defined in~\eqref{eq:def_D_alpha} when applied to functions belonging to $BV^{1,p}(\R^n)$. 
This analysis will be useful for studying the integrability properties of the fractional gradient of the characteristic function of a ball, see \cref{res:fract_grad_estimate} below.
This result, in turn, will be useful in the proof of the integration-by-parts formula of $BV^{\alpha,p}$ functions on balls in \cref{res:int_by_parts_E_ball}.

\begin{theorem}[Weak integrability of $\mathcal D^\alpha$] 
\label{res:optimal_estimate}
Let $\alpha\in(0,1)$, $p\in[1,+\infty]$ and define
\begin{equation*}
p_\alpha=
\begin{cases}
\frac{p}{1-\alpha+\alpha p} 
& 
\text{if}\ p\in[1,+\infty),
\\[3mm]
\frac{1}{\alpha} 
& 
\text{if}\ p=+\infty.
\end{cases}
\end{equation*}
The operator $\mathcal D^\alpha\colon BV^{1,p}(\R^n)\to L^{p_\alpha,\infty}(\R^n)$ is well defined and satisfies
\begin{equation}
\label{eq:Spector_BV}
\|\mathcal D^\alpha f\|_{L^{p_\alpha,\infty}(\R^n)}
\le c_{n,\alpha,p}\,
\|f\|_{L^p(\R^n)}^{1-\alpha}
|Df|(\R^n)^\alpha
\end{equation}
for all $f\in BV^{1,p}(\R^n)$, where $c_{n,\alpha,p}>0$ is a constant depending on $n$, $\alpha$ and $p$ only.
\end{theorem}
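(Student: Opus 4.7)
The plan is to reduce \eqref{eq:Spector_BV} to a pointwise bound of the form
\begin{equation*}
\mathcal D^\alpha f(x) \le C_{n,\alpha}\, (Mf(x))^{1-\alpha}(M|\nabla f|(x))^\alpha,
\end{equation*}
where $M$ denotes the centered Hardy--Littlewood maximal operator (extended to finite Radon measures in the usual way), and then to deduce \eqref{eq:Spector_BV} via the maximal inequality combined with H\"older's inequality in weak Lorentz spaces. The exponent $p_\alpha$ is precisely the one dictated by the interpolation identity $\tfrac{1}{p_\alpha} = \tfrac{1-\alpha}{p} + \alpha$, which is what produces the right product space.

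For the pointwise estimate, I would assume first that $f \in BV^{1,p}(\R^n) \cap C^\infty(\R^n)$ and, for $r>0$, split $\mathcal D^\alpha f(x) = \mathcal I_{\rm loc}(x,r) + \mathcal I_{\rm far}(x,r)$ according to $\set*{|h|<r}$ and $\set*{|h|\ge r}$. On the local part, I would use $|f(x+h)-f(x)| \le |h|\int_0^1 |\nabla f(x+th)|\,dt$, Fubini, and a dyadic decomposition of $B_r(x)$ into annuli $\set*{2^{-k-1}r \le |y-x| < 2^{-k}r}$ to obtain $\mathcal I_{\rm loc}(x,r) \le C_{n,\alpha}\, r^{1-\alpha}\, M|\nabla f|(x)$, the geometric series in $k$ converging because $\alpha<1$. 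On the far part, I would estimate $|f(x+h)-f(x)| \le |f(x+h)|+|f(x)|$ and run an analogous dyadic decomposition of $\R^n \setminus B_r(x)$ to bound $\mathcal I_{\rm far}(x,r) \le C_{n,\alpha}\, r^{-\alpha}\, Mf(x)$, absorbing the $|f(x)|r^{-\alpha}$ contribution into $Mf(x)r^{-\alpha}$ at Lebesgue points. Optimizing with the choice $r = Mf(x)/M|\nabla f|(x)$ (where both are finite and nonzero) yields the pointwise bound displayed above.

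For the Lorentz norm estimate, I would combine the maximal inequalities $\|Mf\|_{L^{p,\infty}(\R^n)}\le c_{n,p}\|f\|_{L^p(\R^n)}$ (with the strong bound for $p>1$ and $\|Mf\|_{L^\infty} \le \|f\|_{L^\infty}$ for $p=+\infty$) and $\|M|\nabla f|\|_{L^{1,\infty}(\R^n)}\le c_n\|\nabla f\|_{L^1(\R^n;\R^n)}$ with the elementary observation, via distribution functions, that $(Mf)^{1-\alpha}\in L^{\frac{p}{1-\alpha},\infty}(\R^n)$ and $(M|\nabla f|)^\alpha \in L^{\frac{1}{\alpha},\infty}(\R^n)$ with norms controlled by $\|f\|_{L^p(\R^n)}^{1-\alpha}$ and $\|\nabla f\|_{L^1(\R^n;\R^n)}^\alpha$ respectively. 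Since $\tfrac{1-\alpha}{p}+\alpha = \tfrac{1}{p_\alpha}$, H\"older's inequality in weak Lorentz spaces then yields \eqref{eq:Spector_BV} for smooth~$f$. To extend the bound to the whole space $BV^{1,p}(\R^n)$ I would mollify: setting $f_\eps = f*\rho_\eps$ and invoking \cref{res:moll_conv_alpha_p} (with $\alpha = 1$) together with Young's convolution inequality I get $\|f_\eps\|_{L^p(\R^n)}\le \|f\|_{L^p(\R^n)}$ and $|Df_\eps|(\R^n) = \|\nabla f_\eps\|_{L^1(\R^n;\R^n)} \le |Df|(\R^n)$. Passing to an a.e.-convergent subsequence, Fatou's lemma applied first to the $h$-integral defining $\mathcal D^\alpha f(x)$ and then to the distribution function of $\mathcal D^\alpha f$ (that is, the lower semicontinuity of the weak Lorentz quasi-norm under pointwise convergence) transfers the bound from each $f_\eps$ to $f$.

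The main obstacle is the uniform treatment of the endpoint exponents $p \in \set{1,+\infty}$: when $p=1$ only the weak-type $(1,1)$ maximal bound is available, so H\"older's inequality must be invoked in the weak Lorentz category rather than the classical $L^p$ one, while when $p=+\infty$ the mollifications $f_\eps$ converge to $f$ only in $L^q_{\loc}(\R^n)$ and not in $L^\infty(\R^n)$, so the passage to the limit must proceed through almost-everywhere convergence rather than strong convergence.
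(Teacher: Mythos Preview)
Your proof is correct and follows the same strategy as the paper's: the Hedberg-type pointwise bound $\mathcal D^\alpha f \le C_{n,\alpha}\,(Mf)^{1-\alpha}(M|\nabla f|)^\alpha$, then H\"older's inequality in Lorentz spaces, then approximation via mollification and Fatou. The differences are organizational. The paper quotes the pointwise bound from~\cite{S20-An}*{Lemma~3.2} rather than deriving it; it places $(Mf)^{1-\alpha}$ in the \emph{strong} space $L^{p/(1-\alpha)}$ (using $\|Mf\|_{L^p}\le c\|f\|_{L^p}$ for $p>1$) and pairs it with $(M|\nabla f|)^\alpha\in L^{1/\alpha,\infty}$, which forces a separate treatment of $p=1$ via a direct strong-$L^1$ estimate; and it runs the approximation in two stages (first cut off to reach $BV\cap L^p$, then mollify into $W^{1,1}\cap C^\infty\cap\Lip_b$). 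Your uniform use of the weak--weak H\"older inequality $L^{p/(1-\alpha),\infty}\cdot L^{1/\alpha,\infty}\hookrightarrow L^{p_\alpha,\infty}$ handles all $p\in[1,+\infty]$ at once, and your single mollification step suffices because you never invoke $f_\eps\in L^1$, only $f_\eps\in L^p$ and $\nabla f_\eps\in L^1$. These are mild streamlinings of the same argument.
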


\begin{proof}
The case $p=1$ is easy, since~\eqref{eq:Spector_BV} holds in the following stronger form \begin{equation*}
\|\mathcal D^\alpha f\|_{L^1(\R^n)}
\le 
c_{n,\alpha}\,
\|f\|_{L^1(\R^n)}^{1-\alpha}
|Df|(\R^n)^\alpha
\end{equation*}
for all $f\in BV(\R^n)$, whose simple proof is left to the reader (for instance, one can follow the strategy of the proof of~\cite{DiNPV12}*{Proposition~2.2}). 
In the following, we thus assume that $p>1$. 
We now divide the proof into three steps.

\smallskip

\textit{Step~1}. 
Assume 
$f\in W^{1,1}(\R^n)\cap C^\infty(\R^n)\cap\Lip_b(\R^n)$. 
By~\cite{S20-An}*{Lemma~3.2}, there exists a constant $C_{n, \alpha} > 0$ such that
\begin{equation*}
\mathcal D^\alpha f(x)
\le C_{n, \alpha}
(Mf(x))^{1-\alpha}\,
(M|\nabla f|(x))^\alpha
\end{equation*}
for all $x\in\R^n$.
Since $f\in L^p(\R^n)$, by H\"older's inequality in Lorentz spaces (see~\cite{O63}*{Theorem~3.4} and~\cite{S20-New}*{Theorem~5.1}) and the well-known continuity properties of the maximal function, we get that
\begin{align*}
\|\mathcal D^\alpha f\|_{L^{p_\alpha,\infty}(\R^n)}
&\le 
C_{n, \alpha}\,
\|(Mf)^{1-\alpha}\,
(M|\nabla f|)^\alpha
\|_{L^{p_\alpha,\infty}(\R^n)}
\\
&\le
\tfrac{C_{n,\alpha}\,p_\alpha}{p_\alpha-1}\,
\|(Mf)^{1-\alpha}\|_{L^{\frac{p}{1-\alpha}}(\R^n)}\,
\|(M|\nabla f|)^\alpha\|_{L^{\frac1\alpha,\infty}(\R^n)}
\\
&\le 
c_{n, \alpha,p}\,
\|f\|_{L^p(\R^n)}^{1-\alpha}\,
|Df|(\R^n)^\alpha,
\end{align*}
where 
$c_{n,\alpha,p}>0$ is a constant depending only on $n$, $\alpha$ and $p$.

\smallskip

\textit{Step~2}. 
Now assume $f\in BV(\R^n)\cap L^p(\R^n)$ and define $f_\eps=f*\rho_\eps$ for all $\eps>0$. 
Then 
$f_\eps\in W^{1,1}(\R^n)\cap C^\infty(\R^n)\cap\Lip_b(\R^n)$ 
with 
$\|f_\eps\|_{L^p(\R^n)}\le\|f\|_{L^p(\R^n)}$ 
for all $\eps>0$ 
and $|Df_\eps|(\R^n)\to|Df|(\R^n)$ 
as $\eps\to0^+$. 
Moreover, by the Fatou Lemma, we have 
\begin{equation*}
\mathcal D^\alpha f(x)
\le
\liminf_{\eps\to0^+}
\mathcal D^\alpha f_\eps(x)
\end{equation*}
for a.e.\ $x\in\R^n$. 
Hence, by~\cite{G14-C}*{Exercise~1.1.1(b)}, we get
\begin{align*}
\|\mathcal D^\alpha f\|_{L^{p_\alpha,\infty}(\R^n)}
&\le
\liminf_{\eps\to0^+}
\|\mathcal D^\alpha f_\eps\|_{L^{p_\alpha,\infty}(\R^n)}\\
&\le 
c_{n, \alpha,p}
\lim_{\eps\to0^+}
\|f_\eps\|_{L^p(\R^n)}^{1-\alpha}\,
|Df_\eps|(\R^n)^\alpha\\
&\le 
c_{n, \alpha,p}\,
\|f\|_{L^p(\R^n)}^{1-\alpha}\,
|Df|(\R^n)^\alpha
\end{align*} 
thanks to Step~1.

\smallskip

\textit{Step~3}. 
Finally, assume $f\in BV^{1,p}(\R^n)$.
Let $(\eta_R)_{R>0}\subset C^\infty_c(\R^n)$ be a family of cut-off functions as in~\cite{CS19}*{Section~3.3} and define $f_R=f\eta_R$ for all $R>0$.
Then $f_R\in BV(\R^n)\cap L^p(\R^n)$ with 
$\|f_R\|_{L^p(\R^n)}\le\|f\|_{L^p(\R^n)}$ 
for all $R>0$ 
and $|Df_R|(\R^n)\to|Df|(\R^n)$ 
as $R\to+\infty$.
Moreover, by the Fatou Lemma, we have 
\begin{equation*}
\mathcal D^\alpha f(x)
\le
\liminf_{R\to+\infty}
\mathcal D^\alpha f_R(x)
\end{equation*}
for a.e.\ $x\in\R^n$. 
Inequality~\eqref{eq:Spector_BV} thus follows again by~\cite{G14-C}*{Exercise~1.1.1(b)}, thanks to Step~2.
This concludes the proof.
\end{proof}

From \cref{res:optimal_estimate}, we immediately deduce the following integrability properties of the fractional gradient of the indicator function of a ball. 

\begin{corollary}[Integrability of $\nabla^\alpha\chi_{B_r(x)}$]
\label{res:fract_grad_estimate}
Let $\alpha\in(0,1)$ and let 
$p \in \left [1, \frac{1}{\alpha} \right )$.
There exists a constant $c_{n,\alpha,p} > 0$, depending on $n$, $\alpha$ and $p$ only, such that
\begin{equation} \label{eq:ball_fract_grad_estimate}
\|\nabla^{\alpha} \chi_{B_{r}(x)}\|_{L^{p}(\R^{n};\,\R^{n})} 
= c_{n, \alpha, q}\, r^{\frac{n}{p} - \alpha}
\end{equation}
for all $x \in \R^{n}$ and $r > 0$.
\end{corollary}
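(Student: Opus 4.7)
The plan is to reduce to the unit ball centered at the origin via translation invariance and the $\alpha$-homogeneity of $\nabla^\alpha$, and then to derive the $L^p$-integrability of $\nabla^\alpha\chi_{B_1}$ from \cref{res:optimal_estimate} through a standard Lorentz-space interpolation.

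First, I would observe that $\nabla^\alpha$ is translation invariant, so one may assume $x=0$. A direct change of variables in the defining singular integral (equivalently, the $\alpha$-homogeneity property recalled from~\cite{Sil19}*{Theorem~4.3}) yields the pointwise identity $\nabla^\alpha\chi_{B_r}(y)=r^{-\alpha}\,(\nabla^\alpha\chi_{B_1})(y/r)$ for all $y\in\R^n$ and $r>0$. Performing the change of variable $z=y/r$ in the $L^p$-norm then gives
\begin{equation*}
\|\nabla^\alpha\chi_{B_r(x)}\|_{L^p(\R^n;\,\R^n)}=r^{\frac{n}{p}-\alpha}\,\|\nabla^\alpha\chi_{B_1}\|_{L^p(\R^n;\,\R^n)}
\end{equation*}
for every $x\in\R^n$ and $r>0$, so it is enough to prove $\nabla^\alpha\chi_{B_1}\in L^p(\R^n;\R^n)$ for every $p\in[1,1/\alpha)$ and then set $c_{n,\alpha,p}:=\|\nabla^\alpha\chi_{B_1}\|_{L^p(\R^n;\,\R^n)}$.

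For this integrability, I would exploit that $\chi_{B_1}\in BV(\R^n)\cap L^\infty(\R^n)$, hence $\chi_{B_1}\in BV^{1,q}(\R^n)$ for every $q\in[1,+\infty]$. Since $|\nabla^\alpha\chi_{B_1}|\le\mu_{n,\alpha}\,\mathcal D^\alpha\chi_{B_1}$ pointwise, it suffices to control $\mathcal D^\alpha\chi_{B_1}$ in $L^p(\R^n)$. Applying \cref{res:optimal_estimate} with $q=1$ (in the stronger $L^1$-form recorded in Step~1 of its proof) gives $\mathcal D^\alpha\chi_{B_1}\in L^1(\R^n)$, while applying it with $q=+\infty$ gives $\mathcal D^\alpha\chi_{B_1}\in L^{1/\alpha,\infty}(\R^n)$. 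A standard interpolation then yields the desired conclusion: any nonnegative measurable function belonging to $L^1(\R^n)\cap L^{1/\alpha,\infty}(\R^n)$ lies in $L^p(\R^n)$ for every $p\in(1,1/\alpha)$, as is seen at once from the layer-cake representation by splitting the integral $\int_0^{+\infty} t^{p-1}\,|\set*{\mathcal D^\alpha\chi_{B_1}>t}|\,dt$ at the threshold where the two decay estimates for the distribution function coincide.

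Since \cref{res:optimal_estimate} is already established, no step in this argument is genuinely difficult; the whole proof amounts to a scaling computation followed by a routine Lorentz interpolation. The substantive content of the statement is really the appearance of the critical exponent $1/\alpha$, which is dictated by the jump singularity of $\nabla^\alpha\chi_{B_1}$ across $\partial B_1$ and was already encoded in the sharp weak-type estimate provided by \cref{res:optimal_estimate}.
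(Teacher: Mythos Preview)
Your argument is correct and follows essentially the same route as the paper: reduce to the unit ball via translation invariance and $\alpha$-homogeneity, then obtain $\nabla^\alpha\chi_{B_1}\in L^1\cap L^{1/\alpha,\infty}$ and interpolate. The only cosmetic difference is that the paper cites~\cite{CS19}*{Proposition~4.8} for the $L^1$-membership of $\nabla^\alpha\chi_{B_1}$, whereas you extract it from the $p=1$ case of \cref{res:optimal_estimate} via the pointwise bound $|\nabla^\alpha\chi_{B_1}|\le\mu_{n,\alpha}\,\mathcal D^\alpha\chi_{B_1}$; both are equally valid (note that the strong $L^1$-estimate you invoke is stated at the very beginning of the proof of \cref{res:optimal_estimate}, just before Step~1, rather than in Step~1 itself).
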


\begin{proof}
Let $x \in \R^{n}$ and $r > 0$ be fixed.
Since 
\begin{equation*}
\nabla^{\alpha} \chi_{B_r(x)}(y) = r^{-\alpha} (\nabla^{\alpha} \chi_{B_1(0)})\left (\frac{y - x}{r}\right)
\end{equation*}
by the rescaling property of $\nabla^\alpha$, we immediately get that
\begin{equation*}
\|\nabla^{\alpha} \chi_{B_r(x)}\|_{L^p(\R^n; \,\R^n)} 
=
\|\nabla^{\alpha} \chi_{B_1}\|_{L^p(\R^n;\, \R^n)}\, r^{\frac{n}{p} - \alpha}.
\end{equation*}
Since 
$
\nabla^{\alpha} \chi_{B_1} 
\in 
L^{1}(\R^n; \R^n)
\cap
L^{\frac{1}{\alpha}, \infty}(\R^n;\R^n)
$
by~\cite{CS19}*{Proposition~4.8} and \cref{res:optimal_estimate}, the conclusion follows by observing that $L^1(\R^n)\cap L^{\frac1\alpha,\infty}(\R^n)\subset L^p(\R^n)$ with continuous inclusion for all $p\in\left[1,\frac1\alpha\right)$.
This interpolation result can be proved for instance by arguing as in the proof of~\cite{G14-C}*{Proposition~1.1.14} with some minor modifications. We leave the simple details to the interested reader.
\end{proof}

\begin{remark}[The case $n=1$ in \cref{res:fract_grad_estimate}]
The estimate in~\eqref{eq:ball_fract_grad_estimate} in the case $n = 1$ can be obtained by a direct computation from the explicit formula
\begin{equation*}
\nabla^{\alpha} \chi_{(x - r,\, x + r)}(y) 
= 
\frac{\mu_{1, \alpha}}{\alpha} \left ( |y - x + r|^{-\alpha} - |y - x - r|^{-\alpha} \right ),
\quad
y\in\R,
\end{equation*}
given by \cite{CS19}*{Example 4.11}.
In particular, we deduce that
\begin{equation*}
\nabla^{\alpha} \chi_{(x -r,\, x + r)} \in L^{\frac{1}{\alpha}, \infty}(\R;\R) \setminus L^{\frac{1}{\alpha}, s}(\R;\R)
\end{equation*} 
for all $s \in [1,+\infty)$.
\end{remark}

From \cref{res:alvino}, we immediately deduce the following improvement of \cref{res:optimal_estimate}.
We leave its simple proof to the reader. 

\begin{corollary}[Improved weak integrability of $\mathcal D^\alpha$]
\label{res:improved_optimal_estimate}
Let $\alpha\in(0,1)$, $n\in\N$ and $p\in[1,+\infty)$ be such that $p\le\frac n{n-1}$ for $n\ge2$.
If $f\in BV^{1,p}(\R^n)$, then 
\begin{equation*}\mathcal D^\alpha f\in 
L^{\frac{p}{1-\alpha+\alpha p} 
,\infty}(\R^n)
\cap
L^{\frac n{n+\alpha-1},\infty}(\R^n).
\end{equation*}
\end{corollary}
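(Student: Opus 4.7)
The first membership is immediate: it is exactly the content of \cref{res:optimal_estimate} applied to $f\in BV^{1,p}(\R^n)$, so only the second inclusion requires an argument. The plan is to use \cref{res:alvino} (Alvino's inequality) to upgrade the integrability of $f$ and then reapply \cref{res:optimal_estimate} at this new exponent, which turns out to produce exactly the desired weak Lorentz exponent $\frac{n}{n+\alpha-1}$.

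First I would handle the case $n\ge 2$. By hypothesis $p\le\frac{n}{n-1}$, so \cref{res:alvino} gives $\|f\|_{L^{\frac{n}{n-1},1}(\R^n)}\le c_n\,|Df|(\R^n)<+\infty$; combined with the continuous inclusion $L^{\frac{n}{n-1},1}(\R^n)\hookrightarrow L^{\frac{n}{n-1}}(\R^n)$, this upgrades $f$ to a function in $BV^{1,\frac{n}{n-1}}(\R^n)$. Now I would apply \cref{res:optimal_estimate} with exponent $\tilde p=\frac{n}{n-1}$ in place of $p$; a direct computation gives
\begin{equation*}
\tilde p_\alpha
=
\frac{\tilde p}{1-\alpha+\alpha\tilde p}
=
\frac{n/(n-1)}{(n-1-\alpha(n-1)+\alpha n)/(n-1)}
=
\frac{n}{n+\alpha-1},
\end{equation*}
which is precisely the target exponent, so $\mathcal D^\alpha f\in L^{\frac{n}{n+\alpha-1},\infty}(\R^n)$.

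For the remaining case $n=1$, one has $\frac{n}{n+\alpha-1}=\frac{1}{\alpha}$, so the goal is $\mathcal D^\alpha f\in L^{\frac{1}{\alpha},\infty}(\R)$. Here \cref{res:alvino} simply says $\|f\|_{L^\infty(\R)}\le|Df|(\R)$, so $f\in L^\infty(\R)\cap L^p(\R)$ with $|Df|(\R)<+\infty$, i.e.\ $f\in BV^{1,\infty}(\R)$. Applying \cref{res:optimal_estimate} with $p=+\infty$ yields exactly $\mathcal D^\alpha f\in L^{1/\alpha,\infty}(\R)$, completing the proof.

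There is no real obstacle: the only thing to check is the algebraic identity $\tilde p_\alpha=\frac{n}{n+\alpha-1}$, which is why the second exponent appears naturally once Alvino's sharp embedding is combined with the weak-type bound already established in \cref{res:optimal_estimate}.
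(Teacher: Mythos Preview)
Your proof is correct and follows exactly the approach the paper intends: the paper states that the corollary is deduced from \cref{res:alvino} combined with \cref{res:optimal_estimate} and leaves the details to the reader, and your argument supplies precisely those details, including the key computation that $\tilde p_\alpha=\frac{n}{n+\alpha-1}$ when $\tilde p=\frac{n}{n-1}$ (and the analogous $n=1$ endpoint).
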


\subsection{Integration by parts of \texorpdfstring{$BV^{\alpha,p}$}{BVˆ(alpha,p)} functions on balls}

We are now ready to state and prove the following in\-te\-gra\-tion-by-parts of $BV^{\alpha,p}$ functions on balls, which is a generalization of~\cite{CS19}*{Theorem~5.2} to $BV^{\alpha,p}$ functions for
$p\in\left(\frac{1}{1-\alpha},+\infty\right]$.
This result will be the central ingredient of the proof of the decay estimates for $BV^{\alpha,p}$ functions in \cref{res:decay_estimate_p} below.

\begin{theorem}[Integration by parts on balls] \label{res:int_by_parts_E_ball} 
Let $\alpha \in (0, 1)$ and 
$p\in\left(\frac{1}{1-\alpha},+\infty\right]$. 
If $f \in BV^{\alpha, p}(\R^{n})$, 
$\phi \in\Lip_c(\R^{n}; \R^{n})$
and $x \in \R^n$, then 
\begin{equation} 
\label{eq:int_by_parts_u_ball} 
\int_{B_r(x)} f \, \div^{\alpha} \phi \, dy 
+ 
\int_{\R^{n}} f \, \phi\, \cdot \nabla^{\alpha} \chi_{B_r(x)}\, dy  
+ 
\int_{\R^{n}} f \, \div^{\alpha}_{\rm NL} (\chi_{B_r(x)}, \phi) \, dy
=
-\int_{B_r(x)} \phi \cdot \, d D^{\alpha} f
\end{equation}
for $\Leb{1}$-a.e.\ $r > 0$.
\end{theorem}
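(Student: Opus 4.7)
\noindent\emph{Proof plan.} The idea is to reduce~\eqref{eq:int_by_parts_u_ball} to the already established duality formula~\eqref{eqi:BV_alpha_p_duality} by a mollification of $\chi_{B_r(x)}$. Indeed, the formal non-local Leibniz identity
\begin{equation*}
\div^{\alpha}(\chi_{B_r(x)} \phi) = \chi_{B_r(x)} \, \div^{\alpha} \phi + \phi \cdot \nabla^{\alpha} \chi_{B_r(x)} + \div^{\alpha}_{\rm NL}(\chi_{B_r(x)}, \phi),
\end{equation*}
provided by~\cite{CS19-2}*{Lemmas~2.2 and~2.5}, together with~\eqref{eqi:BV_alpha_p_duality} would yield~\eqref{eq:int_by_parts_u_ball} at once, were $\chi_{B_r(x)}\phi$ an admissible test field. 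Since it is not, I would set $\eta_\delta = \chi_{B_r(x)} * \rho_\delta$ for the standard family of mollifiers, so that $\eta_\delta \phi \in C^{\infty}_{c}(\R^n;\R^n)$ is admissible. Applying the same Leibniz rule to $\eta_\delta \phi$ and combining with~\eqref{eqi:BV_alpha_p_duality} gives~\eqref{eq:int_by_parts_u_ball} with $\eta_\delta$ in place of $\chi_{B_r(x)}$, and the whole argument reduces to sending $\delta\to 0^+$ in the four resulting terms.

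Two of these limits are routine. For $\int f\eta_\delta\,\div^{\alpha}\phi\,dy$, Lebesgue's Dominated Convergence Theorem applies because $f\,\div^{\alpha}\phi\in L^1(\R^n)$ (use $f\in L^p$ and $\div^{\alpha}\phi\in L^{p'}$). For the right-hand side $-\int\eta_\delta\phi\cdot dD^{\alpha}f$, dominated convergence with respect to the finite Radon measure $|D^{\alpha}f|$ yields the desired limit as soon as $|D^{\alpha}f|(\partial B_r(x))=0$; since this can fail for at most countably many $r>0$, the $\Leb{1}$-a.e.\ restriction in the statement is exactly what allows this step to go through.

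The delicate part is the convergence of the second and third terms, and the assumption $p > \frac{1}{1-\alpha}$ is decisive here: its H\"older conjugate $q=p/(p-1)$ satisfies $q < \frac{1}{\alpha}$, which is precisely the threshold of \cref{res:fract_grad_estimate} ensuring $\nabla^{\alpha}\chi_{B_r(x)} \in L^q(\R^n;\R^n)$. Since $\nabla^{\alpha}\eta_\delta = \rho_\delta * \nabla^{\alpha}\chi_{B_r(x)}$, standard mollification then gives $\nabla^{\alpha}\eta_\delta\to \nabla^{\alpha}\chi_{B_r(x)}$ in $L^q$, and pairing with $f\phi\in L^p$ via H\"older's inequality handles the second term.

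The main obstacle is the third term, where I have to justify $\int f\,\div^{\alpha}_{\rm NL}(\eta_\delta,\phi)\,dy\to\int f\,\div^{\alpha}_{\rm NL}(\chi_{B_r(x)},\phi)\,dy$. Starting from the pointwise bound
\begin{equation*}
|\div^{\alpha}_{\rm NL}(\eta_\delta,\phi)(y)| \le \mu_{n,\alpha}\int_{\R^n}|\eta_\delta(z)-\eta_\delta(y)|\,\frac{|\phi(z)-\phi(y)|}{|z-y|^{n+\alpha}}\,dz,
\end{equation*}
the plan is to exploit the Lipschitz character and compact support of $\phi$ by splitting the inner integral at $|z-y|=1$: on the near piece the Lipschitz bound on $\phi$ absorbs one power of $|z-y|$, while on the far piece boundedness and compact support of $\phi$ yield decay in $y$. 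Invoking \cref{res:optimal_estimate} applied to $\chi_{B_r(x)}\in BV^{1,\infty}(\R^n)$, and to each $\eta_\delta$ uniformly in $\delta$, one obtains a $\delta$-independent bound on $\div^{\alpha}_{\rm NL}(\eta_\delta,\phi)$ in $L^{1/\alpha,\infty}(\R^n)$. H\"older's inequality in Lorentz spaces (using $p'<1/\alpha$) then provides an $L^1$ dominant for the product $f\,\div^{\alpha}_{\rm NL}(\eta_\delta,\phi)$; together with pointwise a.e.\ convergence of $\div^{\alpha}_{\rm NL}(\eta_\delta,\phi)$ to $\div^{\alpha}_{\rm NL}(\chi_{B_r(x)},\phi)$, which follows by dominated convergence in the inner integral, Lebesgue's Dominated Convergence Theorem closes the argument.
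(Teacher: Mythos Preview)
Your strategy---mollify $\chi_{B_r(x)}$ to $\eta_\delta=\chi_{B_r(x)}*\rho_\delta$, apply the Leibniz rule to $\eta_\delta\phi$, use duality, and pass to the limit in each of the four terms---is correct and in fact more streamlined than the paper's route. The paper argues in two stages: first for $p=+\infty$ it approximates $\chi_{B_r(x)}$ by an explicit radial Lipschitz cutoff $h_{\eps,r,x}$ (whose fractional gradient it computes by hand via \cite{CS19}*{Lemma~5.1}), and then for finite $p$ it mollifies $f$ rather than the indicator and reduces to the $L^\infty$ case. Your single-step mollification of the indicator avoids this case distinction; both approaches ultimately hinge on \cref{res:fract_grad_estimate} for the term involving $\nabla^\alpha\chi_{B_r(x)}$. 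One small point: since $\phi\in\Lip_c$, the product $\eta_\delta\phi$ is only Lipschitz, so you should invoke \cref{res:Lip_test_BV_alpha_p} rather than~\eqref{eqi:BV_alpha_p_duality} directly.

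There is, however, a genuine gap in your treatment of the non-local term. Your Lorentz--H\"older step does not deliver an $L^1$ dominant: from $f\in L^{p}=L^{p,p}$ and a uniform bound on $\div^\alpha_{\rm NL}(\eta_\delta,\phi)$ in $L^{1/\alpha,\infty}$, H\"older in Lorentz spaces places the product in $L^{s,\cdot}$ with $\tfrac1s=\tfrac1p+\alpha<1$ (precisely because $p>\tfrac1{1-\alpha}$), hence $s>1$, which is not $L^1$ on $\R^n$. The fix is much simpler than what you propose and makes \cref{res:optimal_estimate} unnecessary here: since $0\le\eta_\delta\le1$, one has the $\delta$-free pointwise bound
\begin{equation*}
|\div^\alpha_{\rm NL}(\eta_\delta,\phi)(y)|\le 2\mu_{n,\alpha}\,\mathcal D^\alpha\phi(y),
\end{equation*}
and $\mathcal D^\alpha\phi\in L^1(\R^n)\cap L^\infty(\R^n)\subset L^{p'}(\R^n)$ because $\phi\in\Lip_c(\R^n;\R^n)$ (this is noted right after~\eqref{eq:def_D_alpha}). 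Thus $|f|\,\mathcal D^\alpha\phi\in L^1(\R^n)$ by the ordinary H\"older inequality, and Lebesgue's Dominated Convergence Theorem applies directly. This is exactly the majorant the paper uses for the analogous term in its Case~1.
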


\begin{proof}
Let $x \in \R^{n}$ and $\phi \in \Lip_{c}(\R^{n}; \R^{n})$ be fixed.
We divide the proof into two parts, dealing with the cases $p = +\infty$ and 
$p\in\left(\frac{1}{1-\alpha},+\infty\right)$ separately.

\smallskip

\textit{Case~1: $p = +\infty$}.
Let $\eps>0$ and define the function $h_{\eps, r, x}\in\Lip(\R^n)$ by setting
\begin{equation*} 
h_{\eps, r, x}(y) 
= 
\begin{cases} 
1 
& \text{if} \ 0 \le |y - x| \le r, \\[3mm]
\dfrac{r + \eps - |y - x|}{\eps} 
& \text{if} \ r < |y - x| < r + \eps, \\[4mm] 
0 
& \text{if} \  |y -x| \ge r + \eps, 
\end{cases} 
\end{equation*}
for all $y\in\R^n$.
By~\cite{CS19}*{Lemma 5.1}, we know that  
$\nabla^\alpha h_{\eps,r,x}\in L^1(\R^n;\R^n)$ with
\begin{equation} \label{eq:frac_grad_cutoff_ball} 
\nabla^{\alpha} h_{\eps, r, x}(y) 
= 
\frac{\mu_{n, \alpha}}{\eps (n + \alpha - 1)} 
\int_{B_{r + \eps}(x) \setminus B_r(x)} 
\frac{x - z}{|x - z|}\,|z - y|^{1 - n - \alpha}\, dz 
\end{equation}
for $\Leb{n}$-a.e.\ $y \in \R^{n}$.
On the one hand, since $h_{\eps, r, x}\,\phi\in\Lip_c(\R^n;\R^n)$, by \cref{res:Lip_test_BV_alpha_p} we have 
\begin{equation} \label{eq:IBP_E_1} 
\int_{\R^{n}} f \, \div^{\alpha}(h_{\eps, r, x}\,\phi) \, dy 
= 
- \int_{\R^{n}} h_{\eps, r, x}\,\phi \cdot \, d D^{\alpha} f.
\end{equation}
Since $h_{\eps, r, x}(y) \to \chi_{\closure{B_r(x)}}(y)$ as $\eps \to 0^+$ for all $y \in \R^{n}$ and $|D^{\alpha} f|(\de B_r(x)) = 0$ for $\Leb{1}$-a.e.\ $r > 0$, we can compute 
\begin{equation*}
\lim_{\eps\to0^+}\int_{\R^n} (h_{\eps, r, x}\,\phi) \cdot \, d D^{\alpha} f
=\int_{B_r(x)} \phi \cdot \, d D^{\alpha} f
\end{equation*}
for $\Leb{1}$-a.e.\ $r > 0$.
On the other hand, by~\cite{CS19-2}*{Lemma~2.5}, we have
\begin{equation} \label{eq:Leibniz_rule_h_phi} 
\div^{\alpha}(h_{\eps, r, x}\,\phi) 
= 
h_{\eps, r, x}\, \div^{\alpha}\phi 
+ 
\phi \cdot \nabla^{\alpha} h_{\eps, r, x} 
+ 
\div_{\rm NL}^{\alpha}(h_{\eps, r, x}, \phi). 
\end{equation}
We deal with each term of the right-hand side of~\eqref{eq:Leibniz_rule_h_phi} separately. For the first term, since $0\le h_{\eps, r, x}\le\chi_{B_{r+1}(x)}$ for all $\eps\in(0,1)$ and $h_{\eps, r, x} \to \chi_{B_r(x)}$ in $L^{q}(\R^{n})$ as $\eps \to 0^+$ for any $q \in [1,+\infty)$, by \cite{CS19}*{Corollary 2.3} and Lebesgue's Dominated Convergence Theorem we can compute
\begin{equation} \label{eq:IBP_conv_1} 
\lim_{\eps\to0^+}
\int_{\R^{n}} f \, h_{\eps, r, x} \,\div^{\alpha}\phi \, dy 
=
\int_{B_r(x)} f \, \div^{\alpha}\phi \, dy. 
\end{equation}
For the second term, by~\eqref{eq:frac_grad_cutoff_ball} we have
\begin{align*} 
\int_{\R^{n}} f(y) \, \phi(y) 
&\cdot 
\nabla^{\alpha} h_{\eps, r, x}(y) \, dy
\\ 
&= 
\frac{\mu_{n, \alpha}}{\eps (n + \alpha - 1)} 
\int_{\R^{n}} f(y) \, \phi(y) 
\cdot 
\int_{B_{r+\eps}(x) \setminus B_r(x)} \frac{x - z}{|x - z|} |z - y|^{1 - n - \alpha}\, dz \, dy. 
\end{align*}
By Fubini's Theorem, we can compute
\begin{align*} 
\int_{\R^{n}} 
& 
f(y) \, \phi(y)\cdot \int_{B_{r+\eps}(x) \setminus B_r(x)} \frac{x - z}{|x - z|} |z - y|^{1 - n - \alpha}\, dz \, dy 
\\
& = 
\int_{B_{r+\eps}(x) \setminus B_r(x)}\frac{x - z}{|x - z|}\cdot \int_{\R^{n}} f(y) \, \phi(y)\,|z - y|^{1 - n - \alpha} \, dy \, dz 
\\
& = 
\int_{r}^{r + \eps} \int_{\partial B_{\rho}(x)} \frac{x - z}{|x - z|}\cdot \int_{\R^{n}} f(y) \, \phi(y)\,|z - y|^{1 - n - \alpha} \, dy \, d \Haus{n - 1}(z) \, d \rho.
\end{align*}
By Lebesgue's Differentiation Theorem, we have
\begin{equation*}
\begin{split}
\lim_{\eps\to0}\,
\frac{1}{\eps}
\int_{\R^{n}} 
& 
f(y) \, \phi(y)\cdot \int_{B_{r+\eps}(x) \setminus B_r(x)} \frac{x - z}{|x - z|}\, |z - y|^{1 - n - \alpha}\, dz \, dy
\\
&=
\lim_{\eps\to0}\,
\frac{1}{\eps}
\int_{r}^{r + \eps} \int_{\partial B_{\rho}(x)} \frac{x - z}{|x - z|}\cdot \int_{\R^{n}} f(y) \, \phi(y)\,|z - y|^{1 - n - \alpha} \, dy \, d \Haus{n - 1}(z) \, d \rho
\\
&=
\int_{\partial B_r(x)} \frac{x - z}{|x - z|}\cdot \int_{\R^{n}} f(y) \, \phi(y)\,|z - y|^{1 - n - \alpha} \, dy \, d \Haus{n - 1}(z)
\\
&=
\int_{\R^{n}} f(y) \, \phi(y)\cdot\int_{\partial B_r(x)} \frac{x - z}{|x - z|}\,|z - y|^{1 - n - \alpha}\, d \Haus{n - 1}(z)\, dy
\\
&=
\int_{\R^{n}} f(y) \, \phi(y)\cdot\int_{\R^n} |z - y|^{1 - n - \alpha}\, dD\chi_{B_r(x)}(z)\, dy
\end{split}
\end{equation*}
for $\Leb{1}$-a.e.\ $r > 0$. 
Therefore, by~\cite{CS19}*{Theorem~3.18, equation (3.26)}, we get that 
\begin{equation} \label{eq:IBP_conv_3} 
\begin{split}
\lim_{\eps\to0}
\int_{\R^{n}} 
& 
f \,\phi \cdot \nabla^{\alpha}  h_{\eps, r, x} \, dy 
\\
& = 
\frac{\mu_{n, \alpha}}{n + \alpha - 1} 
\int_{\R^{n}} f(y) \, \phi(y) \cdot\int_{\R^{n}} |z - y|^{1 - n - \alpha} \, d D \chi_{B_{r}(x)}(z)  \, dy 
\\
& = 
\int_{\R^{n}} f \,\phi \cdot \nabla^{\alpha} \chi_{B_{r}(x)} \, dy  
\end{split}
\end{equation}
for $\Leb{1}$-a.e.\ $r > 0$. 
Finally, for the third term, we note that
\begin{equation*}
\abs*{\frac{(z - y) \cdot (\phi(z) - \phi(y)) (h_{\eps, r, x}(z) - h_{\eps, r, x}(y))}{|z - y|^{n + \alpha + 1}}} 
\le 
2\,\frac{|\phi(z) - \phi(y)|}{|z - y|^{n + \alpha}} \in L^{1}_{z}(\R^{n})
\end{equation*} 
for all $y\in\R^n$, so that
\begin{equation*}
\lim_{\eps\to0}\div_{\rm NL}^{\alpha}(h_{\eps, r, x}, \phi)(y) 
= \div^{\alpha}_{\rm NL}(\chi_{B_r(x)}, \phi)(y) 
\end{equation*}
for $\Leb{n}$-a.e.\ $y\in\R^n$ by Lebesgue's Dominated Convergence Theorem. Since
\begin{equation*}
\abs*{\div_{\rm NL}^{\alpha}(h_{\eps, r, x}, \phi)(y)} 
\le 2 \int_{\R^{n}}\frac{|\phi(z) - \phi(y)|}{|z - y|^{n + \alpha}}\,dz \in L^{1}_y(\R^{n}),
\end{equation*}
again by Lebesgue's Dominated Convergence Theorem we can compute
\begin{equation} \label{eq:IBP_conv_2} 
\lim_{\eps\to0}
\int_{\R^{n}} f \, \div_{\rm NL}^{\alpha}(h_{\eps, r, x}, \phi) \, dy 
=
\int_{\R^{n}} f \, \div^{\alpha}_{\rm NL}(\chi_{B_{r}(x)}, \phi) \, dy. 
\end{equation}
Combining~\eqref{eq:IBP_E_1}, \eqref{eq:Leibniz_rule_h_phi}, \eqref{eq:IBP_conv_1}, \eqref{eq:IBP_conv_3} and~\eqref{eq:IBP_conv_2}, we obtain~\eqref{eq:int_by_parts_u_ball}.

\smallskip

\textit{Case~2: $p \in \left (\frac{1}{1 - \alpha}, +\infty \right )$}. 
Let $(\rho_{\eps})_{\eps > 0}$ be a family of standard mollifiers (see~\cite{CS19}*{Section~3.3}) and define $f_\eps=f*\rho_\eps$ for all $\eps>0$.
By \cref{res:moll_conv_alpha_p}, we have that
\begin{equation}
\label{eq:smoothing_BV_alpha_p_step_2}
f_{\eps} 
\in 
BV^{\alpha, p}(\R^{n})
\cap 
L^{\infty}(\R^{n}) 
\cap 
C^{\infty}(\R^{n}) 
\quad\text{with}\quad
D^\alpha f_\eps = (\rho_\eps*D^\alpha f)\Leb{n}
\end{equation} 
for all $\eps>0$.
Hence, by Step~1, for each $\eps > 0$ we have
\begin{equation*} 
\int_{B_r(x)} f_{\eps} \, \div^{\alpha} \phi \, dy 
+ 
\int_{\R^{n}} f_{\eps} \, \phi\, \cdot \nabla^{\alpha} \chi_{B_r(x)}\, dy  
+ 
\int_{\R^{n}} f_{\eps} \, \div^{\alpha}_{\rm NL} (\chi_{B_r(x)}, \phi) \, dy
=
-\int_{B_r(x)} \phi \cdot \, d D^{\alpha} f_{\eps}
\end{equation*}
for $\Leb{1}$-a.e.\ $r > 0$.
We now need to study the convergence as $\eps\to0^+$ of each term of the above equality. 
By \cref{res:moll_conv_alpha_p}, we know that $D^{\alpha} f_{\eps} \weakto D^{\alpha} f$ as $\eps\to0^+$, so that
\begin{equation*}
\lim_{\eps\to0^+}
\int_{B_r(x)} \phi \cdot \, d D^{\alpha} f_{\eps} = 
\lim_{\eps\to0^+}
\int_{\R^{n}} \chi_{B_{r}(x)} \, \phi \cdot \, d D^{\alpha} f_{\eps} 
=
\int_{\R^{n}} \chi_{B_{r}(x)} \, \phi \cdot \, d D^{\alpha} f 
= 
\int_{B_{r}(x)} \phi \cdot \, d D^{\alpha} f
\end{equation*}
for $\Leb{1}$-a.e.\ $r > 0$ thanks to \cite{AFP00}*{Proposition 1.62}. 
Since 
$\div^{\alpha} \varphi \in L^{1}(\R^{n}) \cap L^{\infty}(\R^{n})$ 
by~\cite{CS19}*{Corollary~2.3} and
$\div^{\alpha}_{\rm NL}(\chi_{B_{r}(x)}, \phi) \in L^{1}(\R^{n}) \cap L^{\infty}(\R^{n})$
by~\cite{CS19}*{Lemma~2.7 and Remark~2.8}, we have
\begin{equation*}
\lim_{\eps\to0^+}
\int_{B_r(x)} f_{\eps} \, \div^{\alpha} \phi \, dy  
=
\int_{B_r(x)} f \, \div^{\alpha} \phi \, dy
\end{equation*}
and
\begin{equation*}
\lim_{\eps\to0^+}
\int_{\R^{n}} f_{\eps} \, \div^{\alpha}_{\rm NL} (\chi_{B_r(x)}, \phi) \, dy 
=
\int_{\R^{n}} f \, \div^{\alpha}_{\rm NL} (\chi_{B_r(x)}, \phi) \, dy
\end{equation*}
thanks to the fact that  $f_\eps\to f$ in~$L^p(\R^n)$ as~$\eps\to0^+$.
Finally, thanks to \cref{res:fract_grad_estimate}, we have $\nabla^{\alpha} \chi_{B_{r}(x)} \in L^{p'}(\R^{n}; \R^{n})$ for any $p \in \left (\frac{1}{1 - \alpha}, + \infty \right )$ and so
\begin{equation*}
\lim_{\eps\to0^+}
\int_{\R^{n}} f_{\eps} \, \phi\, \cdot \nabla^{\alpha} \chi_{B_r(x)}\, dy \to \int_{\R^{n}} f \, \phi\, \cdot \nabla^{\alpha} \chi_{B_r(x)}\, dy
\end{equation*}
again by the convergence $f_\eps\to f$ in~$L^p(\R^n)$ as~$\eps\to0^+$.
The conclusion thus follows.
\end{proof}

\section{Absolute continuity of the fractional variation}

\label{sec:abs_frac_var}

In this section, we prove our first main result \cref{res:abs_frac_var}.
We divide the proof into two parts, dealing with the \emph{subcritical regime}~\eqref{item:abs_frac_var_subcritical} and the \emph{supercritical regime}~\eqref{item:abs_frac_var_supercritical} separately, see \cref{res:ext_lemma328}\eqref{item:Riesz_of_BV_alpha_p} and \cref{res:abs_continuity_p} respectively.
At the end of this section, we provide two examples to show the sharpness of our result in the one-dimensional case~$n=1$.  

\subsection{The subcritical regime \texorpdfstring{$p\in\left[1,\frac n{1-\alpha}\right)$}{1<=p<n/(1-alpha)}}

Thanks to~\cite{CS19}*{Lemma~3.28}, if $f\in BV^{\alpha}(\R^n)$ then
$u = I_{1 - \alpha} f\in bv(\R^{n}) \cap L^{\frac{n}{n - 1 + \alpha}, \infty}(\R^{n})$, with $D u =D^\alpha f$ in $\M(\R^{n}; \R^{n})$.
As a consequence, we immediately deduce that
$|D^{\alpha} f| \ll \Haus{n - 1}$ for all $f \in BV^{\alpha}(\R^{n})$. 

In the following result, which is a generalization of~\cite{CS19}*{Lemma~3.28} to the present setting, we show that this phenomenon is typical of the functions belonging to $BV^{\alpha,p}(\R^n)$ in the subcritical regime $p\in\left[1,\frac n{1-\alpha}\right)$.

\begin{proposition}[Relation between $BV^{\alpha,p}(\R^n)$ and $BV^{1,p}(\R^n)$]
\label{res:ext_lemma328}
Let $\alpha \in (0,1)$, 
$p\in\left(1,\frac{n}{1-\alpha}\right)$
and
$q = \frac{np}{n - (1-\alpha)p }$.

\begin{enumerate}[(i)]

\item\label{item:Riesz_of_BV_alpha_p} 
If $f\in BV^{\alpha,p}(\R^n)$, then
$u = I_{1 - \alpha} f\in BV^{1, q}(\R^n)$ 
with 
\begin{equation*}
\|u\|_{L^q(\R^n)}
\le 
c_{n,\alpha,p}\,\|f\|_{L^p(\R^n)}
\quad\text{and}\quad
D u =D^\alpha f\ \text{in}\ \M(\R^{n}; \R^{n}).
\end{equation*}
As a consequence, we have $|D^{\alpha} f| \ll \Haus{n - 1}$ for all $f \in BV^{\alpha,p}(\R^{n})$ and the operator 
$I_{1-\alpha}\colon BV^{\alpha,p}(\R^n)\to BV^{1,q}(\R^n)$ is continuous.


\item\label{item:frac_Lap_of_BV_1_p}
If $p\in\left(1,\frac{n}{n-\alpha}\right)$ and $u\in BV^{1,p}(\R^n)$, then
$f=(-\Delta)^{\frac{1-\alpha}2}u\in BV^{\alpha,p}(\R^n)$ 
with 
\begin{equation*}
\|f\|_{L^p(\R^n)}
\le c_{n,\alpha,p}\|u\|_{BV^{\alpha,p}(\R^n)}
\quad
\text{and}
\quad
D^\alpha f = D u\ \text{in}\ \M(\R^{n}; \R^{n}).
\end{equation*}
As a consequence, the operator $(-\Delta)^{\frac{1-\alpha}2}\colon BV^{1,p}(\R^n)\to BV^{\alpha,p}(\R^n)$ is continuous.

\end{enumerate}
\end{proposition}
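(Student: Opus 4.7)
The overall strategy mirrors~\cite{CS19}*{Lemma~3.28}, resting on the pointwise identity $\nabla^\alpha g=\nabla I_{1-\alpha}g$ valid for smooth~$g$ together with a mollification argument. For part~(i), given $f\in BV^{\alpha,p}(\R^n)$, the Hardy--Littlewood--Sobolev inequality~\eqref{eq:Riesz_potential_boundedness} immediately yields $u=I_{1-\alpha}f\in L^q(\R^n)$ with $\|u\|_{L^q}\le c_{n,\alpha,p}\|f\|_{L^p}$ for the Sobolev exponent $\frac1q=\frac1p-\frac{1-\alpha}n$. To identify $Du=D^\alpha f$ as measures, I would mollify: setting $f_\eps=f*\rho_\eps$ and $u_\eps=I_{1-\alpha}f_\eps=u*\rho_\eps$, the identity $\nabla^\alpha g=\nabla I_{1-\alpha}g$ on smooth~$g$ gives $Du_\eps=D^\alpha f_\eps\,\Leb n$. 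Passing to the limit via \cref{res:moll_conv_alpha_p} (which provides $D^\alpha f_\eps\weakto D^\alpha f$ together with convergence of total variations) and via $u_\eps\to u$ in~$L^q$ (from $f_\eps\to f$ in $L^p$ combined with the Hardy--Littlewood--Sobolev bound) one verifies distributionally that $Du=D^\alpha f$; in particular $|Du|(\R^n)=|D^\alpha f|(\R^n)<+\infty$, so $u\in BV^{1,q}(\R^n)$ with the claimed continuous dependence, and the absolute continuity $|D^\alpha f|\ll\Haus{n-1}$ follows at once from the classical $BV$ theory.

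For part~(ii), given $u\in BV^{1,p}(\R^n)$ with $p<\frac n{n-\alpha}$, I would first invoke \cref{res:BV_alpha_p_in_S_beta_q} with the parameter choice $\alpha=1$, $\beta=1-\alpha$, $q=p$: its admissibility condition $q<\frac n{n+\beta-\alpha}$ reduces precisely to the hypothesis $p<\frac n{n-\alpha}$, yielding $u\in S^{1-\alpha,p}(\R^n)$, that is, $\nabla^{1-\alpha}u\in L^p(\R^n;\R^n)$ with $\|\nabla^{1-\alpha}u\|_{L^p}\le c\,\|u\|_{BV^{1,p}}$. A Fourier symbol computation on Schwartz functions reveals that $(-\Delta)^{\frac{1-\alpha}2}=-R\cdot\nabla^{1-\alpha}$ (since $R=\nabla I_1$ has Fourier symbol $i\xi/|\xi|$), so the natural definition is $f:=-R\cdot\nabla^{1-\alpha}u$; by the $L^p$-continuity of the Riesz transform for $p\in(1,+\infty)$, one obtains $f\in L^p(\R^n)$ with $\|f\|_{L^p}\le c_{n,\alpha,p}\|u\|_{BV^{1,p}}$. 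To prove $D^\alpha f=Du$ in~$\M(\R^n;\R^n)$, I would fix $\phi\in C^\infty_c(\R^n;\R^n)$ and exploit the identities $\div^\alpha\phi=\div I_{1-\alpha}\phi$ and $(-\Delta)^{\frac{1-\alpha}2}I_{1-\alpha}=\mathrm{Id}$: after mollifying $u_\eps=u*\rho_\eps$, the duality chain
\begin{equation*}
\int f_\eps\,\div^\alpha\phi\,dx
=
\int u_\eps\,\div\phi\,dx
=
-\int\phi\cdot dDu_\eps
\end{equation*}
holds for smooth~$u_\eps$ and passes to the limit by \cref{res:moll_conv_alpha_p} and the $L^p$-continuity of $-R\cdot\nabla^{1-\alpha}$.

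The hard part will be making the operator identity $(-\Delta)^{\frac{1-\alpha}2}=-R\cdot\nabla^{1-\alpha}$ precise outside the Schwartz class: it has to be read as equality of bounded operators from $S^{1-\alpha,p}(\R^n)$ into $L^p(\R^n)$, which follows from the density of $C^\infty_c(\R^n)$ in~$S^{1-\alpha,p}(\R^n)$ (recalled just after the definition of that space) combined with the continuity of both~$R$ and $\nabla^{1-\alpha}$ on the relevant $L^p$-spaces. A related subtlety is justifying the duality chain that transforms $\int f\,\div^\alpha\phi\,dx$ into $-\int\phi\cdot dDu$, which is transparent for smooth~$u$ via Fourier/self-adjointness arguments and is transported to $u\in BV^{1,p}$ through the mentioned $L^p$ and weak-$\ast$ convergences.
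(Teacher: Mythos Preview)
Your proposal is correct, but it takes a more circuitous route than the paper in both parts.

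For~(i), the paper avoids mollification entirely. Writing $s=p'$ and noting that $|\div\phi|\in L^r(\R^n)$ with $\frac1s=\frac1r-\frac{1-\alpha}{n}$, Hardy--Littlewood--Sobolev gives $I_{1-\alpha}|\div\phi|\in L^s(\R^n)$, so Fubini's theorem applies directly:
\begin{equation*}
\int_{\R^n} f\,\div^\alpha\phi\,dx = \int_{\R^n} f\,I_{1-\alpha}\div\phi\,dx = \int_{\R^n} u\,\div\phi\,dx,
\end{equation*}
proving $Du=D^\alpha f$ in one line. Your mollification argument works, but observe that in order to verify $\nabla u_\eps=\nabla^\alpha f_\eps$ for the smooth but \emph{non-compactly-supported} function $f_\eps$, you would in practice invoke this very same Fubini identity at the $\eps$-level---so the subsequent passage to the limit is redundant.

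For~(ii), the paper also starts from the embedding $BV^{1,p}\subset S^{1-\alpha,p}$ given by \cref{res:BV_alpha_p_in_S_beta_q} to obtain $f=(-\Delta)^{\frac{1-\alpha}2}u\in L^p$ (via the identification $S^{1-\alpha,p}=L^{1-\alpha,p}$), but then proceeds differently: it uses \cref{res:alvino} to place $u\in L^q$ (note $p<q<\frac n{n-1}$ under the hypothesis $p<\frac n{n-\alpha}$), shows $I_{1-\alpha}f=u$ in $L^q$ by Fourier transform on $C^\infty_c$ plus approximation, and then simply reapplies the Fubini identity of part~(i). Your route through the Riesz-transform representation $f=-R\cdot\nabla^{1-\alpha}u$ and mollification is also valid and is arguably more self-contained, but it requires justifying the operator identity $(-\Delta)^{\frac{1-\alpha}2}=-R\cdot\nabla^{1-\alpha}$ on $S^{1-\alpha,p}$ via density, as you acknowledge. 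The paper's reduction of~(ii) to~(i) is shorter and avoids this extra layer.
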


\begin{proof}
Let $s=\frac{p}{p-1}$ and note that $r = \frac{n s}{n + (1 - \alpha)s}\in\left(1,\frac{n}{1-\alpha}\right)$.
We prove the two properties separately.

\smallskip

\textit{Proof of~\eqref{item:Riesz_of_BV_alpha_p}}.
Let $f\in BV^{\alpha,p}(\R^n)$. 
By the Hardy--Littlewood--Sobolev inequality, we immediately get that $u=I_{1-\alpha}f\in L^q(\R^n)$.
Given $\phi \in C^{\infty}_{c}(\R^{n}; \R^{n})$, we clearly have $I_{1 - \alpha} |\div \phi| \in L^{s}(\R^{n})$, because $|\div\phi|\in L^r(\R^n)$.
Hence, by Fubini Theorem, we have
\begin{equation}
\label{eq:ibp_Riesz}
\int_{\R^{n}} f\, \div^{\alpha} \phi \, dx = \int_{\R^{n}} f\, I_{1 - \alpha} \div \phi \, dx = \int_{\R^{n}} u \,\div \phi \, dx
\end{equation}
for all $\phi \in C^{\infty}_{c}(\R^{n}; \R^{n})$, proving that $D^{\alpha} f = Du$ in $\M(\R^{n}; \R^{n})$.
The remaining part of the statement in~\eqref{item:Riesz_of_BV_alpha_p} follows easily.

%

\smallskip

\textit{Proof of~\eqref{item:frac_Lap_of_BV_1_p}}.
Let $p\in\left(1,\frac{n}{n-\alpha}\right)$ and $u\in BV^{1,p}(\R^n)$.
Since $p<\frac n{n-\alpha}$, we can apply \cref{res:BV_alpha_p_in_S_beta_q} to get that $BV^{1,p}(\R^n)\subset S^{1-\alpha,p}(\R^n)$ with continuous inclusion, so that $f=(-\Delta)^{\frac{1-\alpha}2}u\in L^p(\R^n)$ (thanks to the identification $S^{1-\alpha,p}(\R^n)=L^{1-\alpha,p}(\R^n)$ following from~\cite{BCCS20}*{Corollary~2.1}, also see the discussion in~\cite{BCCS20}*{Section~2.1}) and thus $I_{1-\alpha}f\in L^q(\R^n)$ by the Hardy--Littlewood--Sobolev inequality.
Since $p<\frac n{n-\alpha}$, we also have that $p<q<\frac n{n-1}$ and thus $BV^{1,p}(\R^n)\subset L^q(\R^n)$ with continuous inclusion by \cref{res:alvino}.
Hence $u\in L^q(\R^n)$ and we can now claim that $I_{1-\alpha}f=u$ in $L^q(\R^n)$.
Indeed, this is easily verified if $u\in C^\infty_c(\R^n)$ by applying the Fourier transform (see~\cite{LX21}*{Lemma~2.3} for instance), so that the claim follows by a plain approximation argument.
Therefore, by applying Fubini Theorem again, we can write~\eqref{eq:ibp_Riesz} and prove that $D^\alpha f=Du$ in $\M (\R^n;\R^n)$, reaching the conclusion. 
\end{proof}

\begin{remark}[About \cref{res:ext_lemma328}\eqref{item:frac_Lap_of_BV_1_p}]
The validity of \cref{res:ext_lemma328}\eqref{item:frac_Lap_of_BV_1_p} when $p=1$ was already proved in~\cite{CS19}*{Lemma~3.28}.
We also refer the reader to~\cite{KS21}*{Proposition~3.1}, in which the authors prove that, if $u\in W^{1,p}(\R^n)$ for some $p\in[1,+\infty]$, then $f=(-\Delta)^{\frac{1-\alpha}{2}}u\in S^{\alpha,p}(\R^n)$ with $\nabla^\alpha f=\nabla u$ in $L^p(\R^n;\R^n )$.
\end{remark}

\subsection{The supercritical regime \texorpdfstring{$p\in\left[\frac{n}{1-\alpha},+\infty\right]$}{}}

We now focus on the absolute continuity property of the fractional variation with respect to the Hausdorff measure for functions belonging to $BV^{\alpha,p}(\R^n)$ in the supercritical regime
$p\in\left[\frac{n}{1-\alpha},+\infty\right]$. 
The crucial tool in this case is provided by the following important consequence of \cref{res:int_by_parts_E_ball}, which extends~\cite{CS19}*{Theorem~5.3} to the present setting.

\begin{theorem}[Decay estimates for $BV^{\alpha,p}$ functions for $p>\frac 1{1-\alpha}$]
\label{res:decay_estimate_p} 
Let $\alpha \in (0,1)$ and
$p\in\left(\frac{1}{1-\alpha},+\infty\right]$.
There exist two constants $A_{n,\alpha,p}, B_{n,\alpha,p} > 0$, depending on $n$, $\alpha$ and $p$ only, with the following property. 
If $f \in BV^{\alpha,p}(\mathbb{R}^n)$ then,
for $|D^{\alpha} f|$-a.e.\ $x \in \R^n$, there exists $r_x > 0$ such that
\begin{equation}
\label{eq:decay_D_alpha_u_B_1_prime} 
|D^{\alpha} f|(B_r(x)) 
\le 
A_{n,\alpha,p} 
\|f\|_{L^p(\R^n)}\,
r^{\frac{n}{q} - \alpha} 
\end{equation}
and
\begin{equation}
\label{eq:decay_D_alpha_u_B_2_prime}  
|D^{\alpha} (f \chi_{B_r(x)})|(\mathbb{R}^{n}) 
\le 
B_{n,\alpha,p} 
\|f\|_{L^p(\R^n)}\,
r^{\frac{n}{q} - \alpha} 
\end{equation}
for all $r \in (0, r_x)$, 
where $q\in[1,+\infty)$ is such that $\frac1p+\frac1q=1$.
\end{theorem}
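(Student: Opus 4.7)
The plan is to establish \eqref{eq:decay_D_alpha_u_B_1_prime} first, by combining the integration-by-parts formula on balls (\cref{res:int_by_parts_E_ball}) with the Besicovitch differentiation theorem, and then to deduce \eqref{eq:decay_D_alpha_u_B_2_prime} directly from \eqref{eq:decay_D_alpha_u_B_1_prime} via a second application of the same formula. The core idea is that testing the formula \eqref{eq:int_by_parts_u_ball} against a vector field which equals a constant vector on $B_r(x)$ produces the Euclidean vector $D^\alpha f(B_r(x))$ on the right-hand side, and Besicovitch's theorem then allows us to replace this vector by the total variation.

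Concretely, I would fix once and for all $\eta\in C^\infty_c(B_2)$ with $\eta\equiv 1$ on $B_1$ and $0\le\eta\le 1$, and for $x\in\R^n$ and $e\in\R^n$ with $|e|\le 1$ set $\phi_e(y)=e\,\eta(y-x)\in\Lip_c(\R^n;\R^n)$. Then for every $r\in(0,1)$ one has $\phi_e\equiv e$ on $B_r(x)$, so \cref{res:int_by_parts_E_ball} applied to $\phi_e$ gives
\begin{equation*}
-e\cdot D^\alpha f(B_r(x))
=\int_{B_r(x)}f\,\div^\alpha\phi_e\,dy
+\int_{\R^n}f\,\phi_e\cdot\nabla^\alpha\chi_{B_r(x)}\,dy
+\int_{\R^n}f\,\div^\alpha_{\rm NL}(\chi_{B_r(x)},\phi_e)\,dy
\end{equation*}
for $\Leb{1}$-a.e.\ $r\in(0,1)$. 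Hölder's inequality in the conjugate exponents $(p,q)$ bounds each integral by $\|f\|_{L^p(\R^n)}$ times the $L^q$ norm of the corresponding operator, and each of these three $L^q$ norms satisfies an $O(r^{n/q-\alpha})$ bound: for the first term, $\|\div^\alpha\phi_e\|_{L^q(B_r(x))}\le\|\div^\alpha\phi_e\|_{L^\infty(\R^n)}|B_r|^{1/q}\le C\,r^{n/q}\le C\,r^{n/q-\alpha}$ whenever $r<1$, since $\div^\alpha\phi_e\in L^\infty(\R^n)$ by \cite{CS19}*{Corollary~2.3}; for the second term, $\|\phi_e\,\nabla^\alpha\chi_{B_r(x)}\|_{L^q}\le\|\nabla^\alpha\chi_{B_r(x)}\|_{L^q}=c_{n,\alpha,q}\,r^{n/q-\alpha}$ by \cref{res:fract_grad_estimate}, whose hypothesis $q<1/\alpha$ is exactly the assumption $p>1/(1-\alpha)$; and for the third term, via the pointwise bound $|\div^\alpha_{\rm NL}(\chi_{B_r(x)},\phi_e)(y)|\le 2\mu_{n,\alpha}\,\mathcal D^\alpha\chi_{B_r(x)}(y)$ together with the scaling identity $\|\mathcal D^\alpha\chi_{B_r(x)}\|_{L^q}=r^{n/q-\alpha}\|\mathcal D^\alpha\chi_{B_1}\|_{L^q}$ and \cref{res:improved_optimal_estimate} one gets $\|\div^\alpha_{\rm NL}(\chi_{B_r(x)},\phi_e)\|_{L^q}\le C\,r^{n/q-\alpha}$. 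Taking the supremum over $|e|\le 1$ and extending from $\Leb{1}$-a.e.\ $r$ to every $r\in(0,1)$ by left-continuity of the vector measure on open balls yields $|D^\alpha f(B_r(x))|\le C_{n,\alpha,p}\|f\|_{L^p(\R^n)}\,r^{n/q-\alpha}$ for every $r\in(0,1)$.

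To upgrade this vector-norm estimate to a bound on the total variation, I would invoke the Besicovitch differentiation theorem applied to $D^\alpha f$ with respect to $|D^\alpha f|$ (see e.g.\ \cite{AFP00}*{Corollary~2.23}): for $|D^\alpha f|$-a.e.\ $x\in\R^n$ the polar ratio $D^\alpha f(B_r(x))/|D^\alpha f|(B_r(x))$ converges to a unit vector $\sigma(x)\in\Sph^{n-1}$ as $r\to 0^+$, so there exists $r_x\in(0,1)$ such that $|D^\alpha f|(B_r(x))\le 2\,|D^\alpha f(B_r(x))|$ for all $r\in(0,r_x)$, which produces \eqref{eq:decay_D_alpha_u_B_1_prime} with $A_{n,\alpha,p}=2C_{n,\alpha,p}$.

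Finally, to prove \eqref{eq:decay_D_alpha_u_B_2_prime} I would apply \cref{res:int_by_parts_E_ball} to an arbitrary $\phi\in C^\infty_c(\R^n;\R^n)$ with $\|\phi\|_{L^\infty}\le 1$. Since $\int_{\R^n}(f\chi_{B_r(x)})\,\div^\alpha\phi\,dy=\int_{B_r(x)}f\,\div^\alpha\phi\,dy$, the formula \eqref{eq:int_by_parts_u_ball} yields three terms, controlled respectively by $|D^\alpha f|(B_r(x))$ (now bounded by \eqref{eq:decay_D_alpha_u_B_1_prime}), by $\|f\|_{L^p}\|\nabla^\alpha\chi_{B_r(x)}\|_{L^q}=O(r^{n/q-\alpha})$, and by $2\mu_{n,\alpha}\|f\|_{L^p}\|\mathcal D^\alpha\chi_{B_r(x)}\|_{L^q}=O(r^{n/q-\alpha})$ via the same pointwise bound on $\div^\alpha_{\rm NL}$ used above; taking the supremum over $\phi$ produces \eqref{eq:decay_D_alpha_u_B_2_prime} with the same threshold $r_x$. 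The main obstacle I anticipate is precisely the Besicovitch step, which is essential because the integration-by-parts formula on balls naturally controls only the vector $D^\alpha f(B_r(x))$ rather than its total variation, and which is exactly what forces both the almost-everywhere quantifier in $x$ and the appearance of the threshold $r_x$.
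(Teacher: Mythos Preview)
Your argument is correct in outline and follows essentially the same strategy as the paper: apply \cref{res:int_by_parts_E_ball} with a test field constant on $B_r(x)$, bound the three resulting terms by $O(r^{n/q-\alpha})$ in $L^q$, and use the polar decomposition (equivalently, Besicovitch differentiation) to pass from the vector $D^\alpha f(B_r(x))$ to the total variation. The paper's proof differs only in two cosmetic choices. First, the paper takes the test field $\phi_{x,r}$ to \emph{scale} with~$r$ (supported in $B_{2r}(x)$), which makes the first term genuinely of order $r^{n/q-\alpha}$ and is where the paper sees the constraint $\alpha q<1$ appear; your fixed test field gives the first term as $O(r^{n/q})$, which is simpler and pushes the constraint entirely onto the second and third terms. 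Second, for the nonlocal term the paper exploits the scaling identity $\phi_{x,r}(x+ry)=\phi_{0,1}(y)$, whereas your uniform bound $|\div^\alpha_{\rm NL}(\chi_{B_r(x)},\phi)|\le 2\mu_{n,\alpha}\,\mathcal D^\alpha\chi_{B_r(x)}$ is cleaner and, importantly, works unchanged in Step~2 for arbitrary $\phi$ with $\|\phi\|_\infty\le1$.

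One small correction: your citation of \cref{res:improved_optimal_estimate} to justify $\mathcal D^\alpha\chi_{B_1}\in L^q(\R^n)$ does not cover the full range $q\in[1,\tfrac1\alpha)$ when $n\ge2$, since that corollary (applied with the admissible exponents $p\le\tfrac n{n-1}$) only yields $\mathcal D^\alpha\chi_{B_1}\in L^{1}\cap L^{\frac{n}{n-1+\alpha},\infty}$, hence $L^q$ only for $q<\tfrac{n}{n-1+\alpha}<\tfrac1\alpha$. Instead, apply \cref{res:optimal_estimate} with $p=+\infty$ (noting $\chi_{B_1}\in BV^{1,\infty}(\R^n)$) to get $\mathcal D^\alpha\chi_{B_1}\in L^{1/\alpha,\infty}$, combine with the $L^1$ bound from the case $p=1$, and interpolate exactly as in the proof of \cref{res:fract_grad_estimate}. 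With this fix your argument goes through for the full range $p>\tfrac1{1-\alpha}$.
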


\begin{proof}
Since $f \in BV^{\alpha,p}(\R^n)$, by the Polar Decomposition Theorem for Radon measures there exists a Borel vector valued function $\sigma_{f}^{\alpha}\colon\R^n\to\R^n$ such that 
\begin{equation}
\label{eq:polar_decomp_D_alpha}
D^{\alpha} f 
= 
\sigma_{f}^{\alpha}\,|D^{\alpha} f| 
\quad\text{with}\quad 
|\sigma_{f}^{\alpha}(x)| = 1\ 
\text{for} \ 
|D^{\alpha} f|\text{-a.e.}\ 
x \in \R^{n}.
\end{equation}
We divide the proof into two steps, dealing with the two estimates separately.

\smallskip

\textit{Step~1: proof of~\eqref{eq:decay_D_alpha_u_B_1_prime}}. 
Let $\sigma_{f}^{\alpha}: \R^{n} \to \R^{n}$ be as in~\eqref{eq:polar_decomp_D_alpha} and let $x\in\R^n$ be such that $|\sigma_{f}^{\alpha}(x)| = 1$.
Given $r>0$, we define the vector field $\phi\colon\R^n\to\R^n$ by setting
\begin{equation} 
\label{eq:phi_x_r_def}
\phi_{x, r}(y) = 
\begin{cases} 
\sigma_{f}^{\alpha}(x) 
& \text{if} \,  y \in B_{r}(x),\\
\sigma_{f}^{\alpha}(x) \left (2 - \frac{|y - x|}{r} \right ) 
& \text{if} \, y \in B_{2r}(x) \setminus B_{r}(x),\\
0 
& \text{if} \, y \notin B_{2 r}(x),
\end{cases}
\end{equation}
for all $y\in\R^n$.
We clearly have that
$\phi_{x, r} \in \Lip_{c}(\R^{n}; \R^{n})$ 
with
$\|\phi\|_{L^{\infty}(\R^{n}; \R^{n})} \le 1$.
Thus, on the one hand, we can find $r_x\in(0,1)$ such that
\begin{equation} 
\label{eq:decay_estimate_1_p} 
\int_{B_r(x)} \phi_{x, r}(y) \cdot\, d D^{\alpha} f(y) 
= 
\int_{B_{r}(x)} \sigma_{f}^{\alpha}(x) \cdot \sigma_{f}^{\alpha}(y) \, d |D^{\alpha}f|(y) 
\ge 
\frac{1}{2} |D^{\alpha} f|(B_r(x)) 
\end{equation}
for all $r\in(0,r_x)$. 
On the other hand, by~\eqref{eq:int_by_parts_u_ball} we can write
\begin{equation}
\label{eq:decay_estimate_1.5_p}
\begin{split} 
\int_{B_r(x)} \phi_{x, r}\cdot\, d D^{\alpha} f 
&\le 
\abs*{\int_{B_{r}(x)} f \, \div^{\alpha} \phi_{x, r} \, dy} 
+
\abs*{ \int_{\R^{n}} f \, \phi_{x, r} \cdot \nabla^{\alpha} \chi_{B_r(x)} \, dx}\\
&\quad+
\abs*{ \int_{\R^{n}} f \, \div^{\alpha}_{\rm NL} (\chi_{B_r(x)}, \phi_{x, r}) \, dy } 
\end{split}
\end{equation}
for $\Leb{1}$-a.e.\ $r\in(0,r_x)$. 
We now estimate the three terms in the right-hand side of~\eqref{eq:decay_estimate_1.5_p} separately.
For the first term, recalling the definition of $\phi_{x, r}$ in~\eqref{eq:phi_x_r_def}, we have
\begin{align*}
\abs*{\int_{B_{r}(x)} f \, \div^{\alpha} \phi_{x, r} \, dy} 
& \le 
\mu_{n, \alpha} 
\int_{B_{r}(x)} |f(y)| 
\int_{\R^{n} \setminus B_{r}(x)} \frac{|\varphi_{x, r}(z) - \sigma_{f}^{\alpha}(x)|}{|z - y|^{n + \alpha}} \, dz 
dy 
\\
& \le 
\mu_{n, \alpha} 
\|f\|_{L^{p}(B_{r}(x))} 
\left ( 
\int_{B_{r}(x)} 
\left ( 
\int_{\R^{n} \setminus B_{r}(x)} \frac{|\varphi_{x, r}(z) - \sigma_{f}^{\alpha}(x)|}{|z - y|^{n + \alpha}} 
\, dz 
\right )^{q} 
dy 
\right)^{\frac{1}{q}} 
\\
& \le 
2 \mu_{n, \alpha} 
\|f\|_{L^{p}(B_{r}(x))} 
\,
r^{\frac{n}{q} - \alpha} 
\left ( 
\int_{B_{1}} 
\left ( 
\int_{\R^{n} \setminus B_{1}} \frac{1}{|z - y|^{n + \alpha}} \, dz \right )^{q} 
dy 
\right)^{\frac{1}{q}}.
\end{align*}
After some elementary computations, we get
\begin{equation}
\label{eq:1st_term_omissis}
\left(
\int_{\R^{n} \setminus B_{1}(-y)} 
\frac{1}{|z|^{n + \alpha}} \, dz 
\right )^q \, dy 
\le
C_{n,\alpha}
\int_{0}^{1} 
\frac{t^{n - 1}}{(1 - t)^{\alpha q}} 
\, dt
\end{equation}
for some constant $C_{n,\alpha}>0$ depending only on $n$ and $\alpha$.
Note that the integral appearing in the right-hand side of~\eqref{eq:1st_term_omissis} converges if and only if $\alpha q < 1$, that is, $p > \frac{1}{1 - \alpha}$. 
We thus get
\begin{equation}
\label{eq:mattacchione_I}
\abs*{\int_{B_{r}(x)} f \, \div^{\alpha} \phi_{x, r} \, dy}
\le
C_{n,\alpha,q}\,
\|f\|_{L^{p}(\R^n)} 
\,
r^{\frac{n}{q} - \alpha} 	
\end{equation}
for some constant $C_{n,\alpha,q}>0$ depending only on $n$, $\alpha$ and~$q$.
For the second term in the right-hand side of~\eqref{eq:decay_estimate_1.5_p}, we have
\begin{equation} \label{eq:estimate_u_nabla_alpha_B_r_x}
\left | \int_{\R^{n}} f \phi_{x, r} \cdot \nabla^{\alpha} \chi_{B_{r}(x)} \, dy \right | \le 
\|f\|_{L^{p}(B_{2r}(x))} \|\nabla^{\alpha}\chi_{B_{r}(x)}\|_{L^{q}(\R^{n}; \R^{n})} 
\le 
C_{n, \alpha,q} 
\|f\|_{L^{p}(B_{2 r}(x))} \, 
r^{\frac{n}{q} - \alpha}
\end{equation}
thanks to \cref{res:fract_grad_estimate}, for some constant $C_{n, \alpha,q}>0$ depending only on $n$, $\alpha$ and~$q$.
Finally, observing that 
$
\phi_{x, r}(x + r y) 
= 
\varphi_{0, 1}(y)	
$
for all $y\in\R^n$, a simple change of variables gives
\begin{equation}
\label{eq:div_NL_alpha_prime}
\|\div_{\rm NL}^{\alpha}(\chi_{B_{r}(x)}, \varphi_{x, r})\|_{L^{q}(\R^{n})} 
= r^{\frac nq - \alpha}\,
\|\div_{\rm NL}^{\alpha}(\chi_{B_{1}}, \varphi)\|_{L^{q}(\R^{n})}.
\end{equation}
Thus, for the third and last term in the right-hand side of~\eqref{eq:decay_estimate_1.5_p}, we have 
\begin{equation}
\label{eq:mattacchione_III}
\abs*{
\int_{\R^{n}} f\, \div_{\rm NL}^{\alpha}(\chi_{B_{r}(x)}, \varphi_{x, r}) \, dy
}
\le
C_{n, \alpha, q}\, 
\|f\|_{L^{p}(\R^{n})} r^{\frac{n}{q} - \alpha},
\end{equation}
where $C_{n, \alpha, q}=\|\div_{\rm NL}^{\alpha}(\chi_{B_{1}}, \varphi)\|_{L^{q}(\R^{n})}$ (which is finite thanks to~\cite{CS19}*{Lemma~2.7 and Remark~2.8}).
Combining~\eqref{eq:decay_estimate_1_p} with~\eqref{eq:decay_estimate_1.5_p}, \eqref{eq:mattacchione_I}, \eqref{eq:estimate_u_nabla_alpha_B_r_x} and~\eqref{eq:mattacchione_III}, we get~\eqref{eq:decay_D_alpha_u_B_1_prime} with a simple continuity argument.

\smallskip

\textit{Step~2: proof of~\eqref{eq:decay_D_alpha_u_B_2_prime}}.
Let $x\in\R^n$ be such that $|\sigma_{u}^{\alpha}(x)| = 1$.
Given $\phi \in \Lip_{c}(\R^{n}; \R^{n})$ such that $\|\phi\|_{L^{\infty}(\R^{n}; \R^{n})} \le 1$, from~\eqref{eq:int_by_parts_u_ball} we deduce that
\begin{align*} 
\abs*{\int_{B_r(x)} f \, \div^{\alpha} \phi \, dy} 
&\le 
|D^{\alpha} f|(B_r(x)) 
+ 
\|f\|_{L^{p}(\R^{n})} \, 
\|\nabla^{\alpha} \chi_{B_r(x)}\|_{L^{q}(\R^n; \R^{n})}\\
&\quad+ 
\|f\|_{L^{p}(\R^{n})} \|\div^{\alpha}_{\rm NL}(\chi_{B_r(x)}, \phi)\|_{L^{q}(\R^{n})}
\end{align*}
for $\Leb{1}$-a.e.\ $r\in(0,r_x)$. Exploiting~\eqref{eq:decay_D_alpha_u_B_1_prime}, \eqref{eq:ball_fract_grad_estimate} and~\eqref{eq:div_NL_alpha_prime}, we conclude that
\begin{equation*} 
|D^{\alpha} ( f\chi_{B_r(x)} )|(\R^{n}) 
\le 
C_{n, \alpha, q} \, r^{\frac{n}{q} - \alpha}
\end{equation*}
for $\Leb{1}$-a.e.\ $r\in(0,r_x)$, where $C_{n, \alpha, q}>0$ is a constant depending only on $n$, $\alpha$ and~$q$. Inequality~\eqref{eq:decay_D_alpha_u_B_2_prime} thus follows for all $r\in(0,r_x)$ by a simple continuity argument.
\end{proof}

Thanks to \cref{res:decay_estimate_p} and extending~\cite{CS19}*{Corollary~5.4} to the present setting, we are now ready to state and prove the following absolute continuity property of the fractional variation for $BV^{\alpha,p}$ functions with $p\in\left(\frac{1}{1-\alpha},+\infty\right]$. Note that the result below is truly interesting only for $p\in\left [\frac{n}{1-\alpha},+\infty\right]$, due to \cref{res:abs_frac_var}\eqref{item:abs_frac_var_subcritical} (see also \cref{res:ext_lemma328}) and the fact that
\begin{equation*}
n - \alpha - \frac{n}{p} \ge n - 1
\iff 
p \ge \frac{n}{1 - \alpha}.
\end{equation*}

\begin{corollary}[$|D^{\alpha} f|\ll \Haus{\frac nq -\alpha}$ for $p>\frac1{1-\alpha}$] 
\label{res:abs_continuity_p} 
Let $\alpha \in (0,1)$ and
$p\in\left(\frac{1}{1-\alpha},+\infty\right]$.
If $f\in BV^{\alpha,p}(\R^n)$, then there exists a $|D^\alpha f|$-negligible set $Z_f^{\alpha,p}\subset\R^n$ such that
\begin{equation}
\label{eq:abs_cont_estimate}
|D^{\alpha} f| 
\le 
2^{\frac nq -\alpha}\,
\frac{A_{n,\alpha,p}}{\omega_{\frac nq-\alpha}} \,
\|f\|_{L^p(\R^{n})} \, 
\Haus{\frac nq - \alpha} \res \R^{n} \setminus Z_f^{\alpha,p},
\end{equation}
where $A_{n,\alpha,p}$ is as in~\eqref{eq:decay_D_alpha_u_B_1_prime} and $q\in[1,+\infty)$ is such that $\frac1p+\frac1q=1$.
\end{corollary}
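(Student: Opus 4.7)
\bigskip

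The plan is to deduce \eqref{eq:abs_cont_estimate} as a direct consequence of the pointwise decay estimate \eqref{eq:decay_D_alpha_u_B_1_prime} in \cref{res:decay_estimate_p} combined with a standard comparison theorem between Radon measures and Hausdorff measure. The ingredients are already in place; this corollary only packages them.

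Concretely, I would first define
\begin{equation*}
Z_{f}^{\alpha,p}
=
\set*{x\in\R^n : \nexists\, r_x>0\ \text{for which}\ \eqref{eq:decay_D_alpha_u_B_1_prime}\ \text{holds for all}\ r\in(0,r_x)}.
\end{equation*}
\cref{res:decay_estimate_p} is exactly the statement that $Z_f^{\alpha,p}$ is $|D^{\alpha} f|$-negligible, so it is enough to estimate the restriction $|D^{\alpha} f|\res (\R^n\setminus Z_f^{\alpha,p})$. By the very definition of $Z_f^{\alpha,p}$, for every $x\in\R^n\setminus Z_f^{\alpha,p}$ one has
\begin{equation*}
\limsup_{r\to0^+}
\frac{|D^{\alpha}f|(B_r(x))}{\omega_{\frac nq-\alpha}\,r^{\frac nq-\alpha}}
\le
\frac{A_{n,\alpha,p}}{\omega_{\frac nq-\alpha}}\,\|f\|_{L^p(\R^n)},
\end{equation*}
where $q\in[1,+\infty)$ is the conjugate exponent of $p$ (note that $\frac nq-\alpha\in[0,n]$ since we are in the supercritical regime, so the Hausdorff dimension is admissible).

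At this point, the conclusion follows by applying the classical differentiation result comparing Radon measures to Hausdorff measures via upper densities (see for instance \cite{AFP00}*{Theorem~2.56}): if $\mu$ is a positive Radon measure on $\R^n$, $s\in[0,n]$, and $A\subset\R^n$ is a Borel set such that $\limsup_{r\to0^+}\mu(B_r(x))/(\omega_s r^s)\le t$ for every $x\in A$, then $\mu\res A\le 2^s t\,\Haus{s}\res A$. Taking $\mu=|D^\alpha f|$, $s=\frac nq-\alpha$, $t=A_{n,\alpha,p}\|f\|_{L^p(\R^n)}/\omega_{\frac nq-\alpha}$, and $A=\R^n\setminus Z_f^{\alpha,p}$ yields precisely~\eqref{eq:abs_cont_estimate}.

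There is no real obstacle in this step: all of the work has already been done in \cref{res:decay_estimate_p}, whose proof relied on the integration-by-parts formula on balls (\cref{res:int_by_parts_E_ball}) and on the Lorentz-space integrability of $\nabla^\alpha\chi_{B_r}$ (\cref{res:fract_grad_estimate}). The only thing to double-check is that the density comparison result can be invoked with the precise normalization of $\omega_s$ used in the paper, but since $\omega_s=\pi^{s/2}/\Gamma(\tfrac s2+1)$ is the standard one, the constant $2^{\frac nq-\alpha}A_{n,\alpha,p}/\omega_{\frac nq-\alpha}$ coming out of the application matches the statement.
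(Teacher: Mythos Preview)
Your proof is correct and follows exactly the same route as the paper: define $Z_f^{\alpha,p}$ as the exceptional set from \cref{res:decay_estimate_p}, bound the upper $(\tfrac nq-\alpha)$-density of $|D^\alpha f|$ on its complement, and invoke \cite{AFP00}*{Theorem~2.56}. The only detail the paper makes explicit that you gloss over is that $Z_f^{\alpha,p}$ can be taken Borel by the Borel regularity of $|D^\alpha f|$, so that the density theorem applies on the Borel set $\R^n\setminus Z_f^{\alpha,p}$.
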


\begin{proof}
By \cref{res:decay_estimate_p}, there exists a set $Z_{f}^{\alpha,p}\subset\R^n$ such that $|D^{\alpha}f|(Z_{f}^{\alpha,p}) = 0$ and \eqref{eq:decay_D_alpha_u_B_1_prime} holds for any $x \notin Z_{f}^{\alpha,p}$. 
Thanks to the Borel regularity of the Radon measure $|D^{\alpha} f|$, we can assume that $Z_{f}^{\alpha,p}$ is a Borel set without loss of generality. 
Hence, for all 
$x\in\R^{n}\setminus Z_{f}^{\alpha,p}$, 
we have
\begin{equation*} 
\Theta^{*}_{\frac nq - \alpha}(|D^{\alpha} f|, x) 
= 
\limsup_{r \to 0^+} \frac{|D^{\alpha}f|(B_{r}(x))}{\omega_{\frac nq - \alpha} r^{\frac nq - \alpha}} 
\le 
\frac{A_{n, \alpha,p}}{\omega_{\frac nq - \alpha}}\, 
\|f\|_{L^{p}(\R^{n})}.
\end{equation*}
Inequality~\eqref{eq:abs_cont_estimate} thus follows from~\cite{AFP00}*{Theorem 2.56}.
\end{proof}

\begin{remark}[The case $n=1$ and $p=\frac1{1-\alpha}$]
Note that \cref{res:abs_continuity_p} covers the supercritical regime $p\in\left [\frac{n}{1-\alpha},+\infty\right]$ for $n\ge2$, while for $n=1$ the boundary case $p=\frac1{1-\alpha}$ is missing. 
However, if $n=1$ and $p=\frac1{1-\alpha}$, then $q=\frac p{p-1}=\frac1\alpha$ and so $\Haus{\frac nq-\alpha}=\Haus{0}$, so that $|D^\alpha f|\ll\Haus{0}$ for all $f\in BV^{\alpha,\frac 1{1-\alpha}}(\R)$ trivially.
We do not know if this result is sharp.
\end{remark}

\begin{remark}[The limit as $\alpha \to 1^-$]
It is somewhat interesting to observe that \cref{res:abs_continuity_p} still holds true if we send $\alpha \to 1^-$. Indeed, such a limit case would apply only to functions $f \in BV^{1, \infty}(\R^n)$, for which it is well known (see \cite{AFP00}*{Theorem 3.77, Theorem 3.78 and equation (3.90)}, for instance) that
\begin{equation*}
|Df| \le 2 \|f\|_{L^{\infty}(\R^n)} \Haus{n-1} \res J_f,
\end{equation*}
where $J_f$ is the jump set, so that $Z_f^{1, \infty}$ could be any $|Df|$-negligible subset of $\R^n \setminus J_f$.
\end{remark}

\subsection{Two examples in one dimension}

We conclude this section by discussing the optimality of the absolute continuity properties of the fractional variation stated in \cref{res:abs_frac_var} in the one-dimensional case $n=1$. 

We begin with the following example, which is borrowed from~\cite{CS19}*{Theorem 3.26}.

\begin{example}[\cref{res:ext_lemma328}\eqref{item:Riesz_of_BV_alpha_p} is sharp for $n=1$]
\label{exa:f_alpha}
Let $\alpha \in (0, 1)$ and consider
\begin{equation*}
f_{\alpha}(x) = \mu_{1, - \alpha} \left ( |x|^{\alpha - 1} \sgn{x} - |x - 1|^{\alpha - 1} \sgn(x - 1) \right ),
\quad
x\in\R.
\end{equation*}
By~\cite{CS19}*{Theorem 3.26}, we have $f_{\alpha} \in BV^{\alpha}(\R)$ with
$D^{\alpha} f_{\alpha} = \delta_{0} - \delta_{1}$.
Moreover, by~\cite{CS19-2}*{Theorem 3.8 and Remark 3.9}, we have 
$f_{\alpha} \in L^{\frac{1}{1 - \alpha}, \infty}(\R) \setminus L^{\frac{1}{1 - \alpha}, q}(\R)$ for all $q \ge 1$. In particular, since $f_{\alpha} \in L^{1}(\R) \cap L^{\frac{1}{1 - \alpha}, \infty}(\R)$, by interpolation we get that $f_{\alpha} \in L^{p}(\R)$ for all 
$p \in \left [1, \frac{1}{1 - \alpha} \right )$.
Hence $f_\alpha\in BV^{\alpha,p}(\R)$ for all
$p \in \left [1, \frac{1}{1 - \alpha} \right )$
with $|D^\alpha f_\alpha|\not\ll\Haus{\eps}$ for all $\eps > 0$.
This proves that the absolute continuity property of the fractional variation stated in \cref{res:abs_frac_var}\eqref{item:abs_frac_var_subcritical} is sharp for $n=1$. 
\end{example}

We now prove the following result, which combines the properties of the function $f_\alpha$ introduced in \cref{exa:f_alpha} with the decay properties of a finite Radon measure.

\begin{proposition}[The function $u_\alpha=f_\alpha*\nu$]
\label{res:u_alpha}
Let $\alpha\in(0,1)$, let $f_\alpha$ be as in \cref{exa:f_alpha}, and let $\nu\in\mathscr M(\R)$.
Then we have 
\begin{equation*}
u_\alpha=f_\alpha*\nu \in BV^{\alpha,p}(\R) 
\qquad
\text{for all}\ p \in \left [1, \frac{1}{1 - \alpha} \right ),	
\end{equation*}
with 
\begin{equation} \label{eq:fract_der_nu}
D^{\alpha} u_{\alpha} = \nu - (\tau_{1})_{\#} \nu,
\end{equation}
where 
$\tau_{x}(y) = y + x$ for all $x,y\in\R$. In addition, if there exist $C, \eps > 0$ such that
\begin{equation}
\label{eq:mu_radius}
\nu\big((x-r,x+r)\big)
\le
Cr^\eps
\quad\text{for all $x\in\R$ and $r>0$},
\end{equation}
then 
\begin{equation}
\label{eq:u_alpha_integrability}
u_\alpha \in BV^{\alpha,p}(\R)
\quad
\text{for all}\
p\in
\begin{cases}
\left [1, \frac{1 - \eps}{1 - \alpha - \eps}\right )
& 
\text{if}\ \eps \in (0, 1 - \alpha),\\[2mm]
[1,+\infty)
& 
\text{if}\ \eps = 1 - \alpha,\\[2mm]
[1,+\infty]
& 
\text{if}\ \eps \in (1 - \alpha,1].\\[2mm]
\end{cases}
\end{equation}
\end{proposition}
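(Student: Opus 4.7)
The plan is to treat the integrability and the distributional derivative of $u_\alpha = f_\alpha * \nu$ separately, adapting the single-measure computation of \cref{exa:f_alpha} (the case $\nu=\delta_0-\delta_1$) and then exploiting the Frostman-type hypothesis \eqref{eq:mu_radius} via a Hedberg-type pointwise inequality.

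For the first three assertions (membership in $BV^{\alpha,p}(\R)$ for $p<\tfrac{1}{1-\alpha}$ and the identity \eqref{eq:fract_der_nu}), I would combine Young's inequality for the convolution of an $L^p$ function with a finite measure, giving $\|u_\alpha\|_{L^p(\R)}\le\|f_\alpha\|_{L^p(\R)}|\nu|(\R)$, with a duality/Fubini argument: for $\phi\in C_c^\infty(\R)$, interchange the order of integration in $\int_\R u_\alpha\,\div^\alpha\phi\,dx$ (justified because the double integral is absolutely convergent) to reduce it to $\int_\R(\phi(y+1)-\phi(y))\,d\nu(y)$, by invoking the translated version of \cref{exa:f_alpha}. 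This expression is exactly the pairing of $\phi$ with the measure $-(\nu-(\tau_1)_\#\nu)$, and the structure theorem \cref{th:structure_BV_alpha} identifies \eqref{eq:fract_der_nu}; the total variation bound $|\nu-(\tau_1)_\#\nu|(\R)\le 2|\nu|(\R)$ then closes this part.

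For \eqref{eq:u_alpha_integrability}, the strategy is to decompose the kernel as $|f_\alpha(x)|\le C(K_1(x)+K_1(x-1)+K_2(x))$, where $K_1(z)=|z|^{\alpha-1}\chi_{|z|\le 2}$ isolates the two singularities at $0$ and $1$ and $K_2(z)=|z|^{\alpha-2}\chi_{|z|>1}$ captures the cancellation-induced decay of $f_\alpha$ at infinity. Since $K_2\in L^p(\R)$ for every $p\in[1,+\infty]$, Young's inequality dispatches the far part $K_2*|\nu|$ immediately, leaving the analysis of $W:=K_1*|\nu|$. The heart of the argument is a Hedberg-type pointwise estimate obtained by splitting $W(x)$ into integrals over $\{|x-y|\le r\}$ and $\{r<|x-y|\le 2\}$ for a parameter $r\in(0,1)$ to be optimized: the first integral is bounded by $CM|\nu|(x)\,r^\alpha$ via a dyadic sum using $|\nu|(B_\rho)\le 2\rho\,M|\nu|(x)$, where $M$ denotes the Hardy--Littlewood maximal function; the second is bounded by $Cr^{-(1-\alpha-\eps)}$ via a dyadic sum using the Frostman hypothesis \eqref{eq:mu_radius}. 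Balancing the two contributions yields
\begin{equation*}
W(x)\le C\,M|\nu|(x)^{(1-\alpha-\eps)/(1-\eps)},
\end{equation*}
valid in the regime $\eps\in(0,1-\alpha)$. Combined with the weak-type bound $\|M|\nu|\|_{L^{1,\infty}(\R)}\lesssim|\nu|(\R)$ and interpolated with the baseline Young bound $W\in L^{p_0}(\R)$ for any $p_0<\tfrac{1}{1-\alpha}$, this produces $W\in L^p(\R)$ for every $p\in\left[1,\tfrac{1-\eps}{1-\alpha-\eps}\right)$. The boundary regime $\eps=1-\alpha$ is handled analogously, except that the balancing yields only a logarithmic factor $W(x)\lesssim 1+\log^+M|\nu|(x)$, giving $W\in L^p(\R)$ for every $p\in[1,+\infty)$; the supercritical regime $\eps\in(1-\alpha,1]$ instead makes the dyadic sum directly geometric, yielding $W\in L^\infty(\R)$.

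The main obstacle is the upgrade from the sharp weak estimate $W\in L^{(1-\eps)/(1-\alpha-\eps),\infty}$ produced by the Hedberg inequality to the strong open-endpoint $L^p$ bound for $p<(1-\eps)/(1-\alpha-\eps)$: this requires a careful real-interpolation argument between the baseline Young bound and the sharp Hedberg--maximal estimate, and it is also the structural reason why the endpoint exponent is excluded in the intermediate regime $\eps\in(0,1-\alpha)$.
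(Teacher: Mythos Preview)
Your argument is correct and complete in outline. For the first part (membership in $BV^{\alpha,p}$ for $p<\tfrac{1}{1-\alpha}$ and the identification of $D^\alpha u_\alpha$), your treatment coincides with the paper's: Young's inequality for the $L^p$ bound and Fubini for the duality pairing, invoking the translated version of \cref{exa:f_alpha}.

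For the improved integrability under the Frostman hypothesis, however, your route genuinely differs from the paper's. The paper does not use the maximal function at all. Instead, for a free parameter $\delta>0$ it writes $|f_\alpha|=|f_\alpha|^{\delta/q}\,|f_\alpha|^{1-\delta/q}$ (with $q=p'$), applies H\"older's inequality to bound
\[
|u_\alpha(x)|^p\le\Big(\int_\R|f_\alpha(x-y)|^\delta\,d|\nu|(y)\Big)^{p-1}\int_\R|f_\alpha(x-y)|^{p-\delta(p-1)}\,d|\nu|(y),
\]
and then shows directly, via a dyadic decomposition around the two singularities of $f_\alpha$ together with~\eqref{eq:mu_radius}, that the first factor is bounded \emph{uniformly in $x$} provided $\delta<\eps/(1-\alpha)$. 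Integrating in $x$ and applying Tonelli to the second factor reduces matters to the finiteness of $\|f_\alpha\|_{L^{p-\delta(p-1)}}$, which is a constraint on $p$ that, optimized over the admissible $\delta$, yields exactly the ranges in~\eqref{eq:u_alpha_integrability}. The supercritical case $\eps>1-\alpha$ is handled by taking $\delta=1$, giving $u_\alpha\in L^\infty$ directly.

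Your Hedberg--maximal-function approach is more in the spirit of classical potential theory and has the advantage of producing the sharp weak endpoint $W\in L^{(1-\eps)/(1-\alpha-\eps),\infty}$ explicitly, which makes the exclusion of the endpoint transparent. The paper's H\"older-splitting argument is more elementary---no maximal function, no Lorentz interpolation---and is entirely self-contained once one knows $f_\alpha\in L^s$ for $s<\tfrac{1}{1-\alpha}$. Both methods treat the critical and supercritical cases $\eps\ge 1-\alpha$ in essentially the same way (direct dyadic summation yielding a logarithm or a convergent geometric series).
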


\begin{proof}
We divide the proof into two steps.

\smallskip

\textit{Step~1}.
Let $\nu\in\mathscr M(\R)$. We start by showing that $u_{\alpha}\in BV^{\alpha, p}(\R)$ for all 
$p \in \left [1, \frac{1}{1 - \alpha} \right )$ and that it satisfies \eqref{eq:fract_der_nu}.
Indeed, by Young's inequality (for Radon measures) we can estimate
\begin{equation*}
\|u_{\alpha}\|_{L^{1}(\R)} 
\le 
\|f_{\alpha}\|_{L^{1}(\R)} |\nu|(\R).
\end{equation*}
Moreover, thanks to the translation invariance of $\div^{\alpha}$ and exploiting the explicit expression of $f_\alpha$ given in \cref{exa:f_alpha}, we can write 
\begin{align*}
\int_{-\infty}^{\infty} u_{\alpha}(x) \,\div^{\alpha} \varphi(x) \, dx 
& = 
\int_{- \infty}^{\infty} \int_{- \infty}^{\infty} f_{\alpha}(x - y) \, \div^{\alpha} \varphi (x) \, d \nu(y) \, dx
\\ 
& = 
\int_{- \infty}^{\infty} \int_{- \infty}^{\infty} f_{\alpha}(x - y)\, \div^{\alpha} \varphi(x) \, dx \, d \nu(y) 
\\
& = 
- \int_{- \infty}^{\infty} \int_{- \infty}^{\infty} \varphi(x + y) \, d \left ( \delta_{0}(x) - \delta_{1}(x) \right ) \, d \nu(y) \\
& = 
- \int_{- \infty}^{\infty} \left ( \varphi(y) - \varphi(y + 1) \right ) \, d \nu(y)
\end{align*}
for all $\phi\in C^\infty_c(\R)$.
Thus $u_{\alpha} \in BV^{\alpha, 1}(\R)$ with
$D^{\alpha} u_{\alpha} = \nu - (\tau_{1})_{\#} \nu$.
In addition, by Jensen's inequality and Tonelli's Theorem we can estimate
\begin{align*}
\int_{- \infty}^{\infty} |u_{\alpha}(x)|^{p} \, dx 
& \le \int_{- \infty}^{\infty}  
|\nu|(\R)^{p - 1}
\int_{- \infty}^{\infty} |f_{\alpha}(x - y)|^{p} \, d |\nu|(y) \, dx 
= |\nu|(\R)^{p} \,\|f_{\alpha}\|_{L^{p}(\R)}^p 
< + \infty
\end{align*}
for all $p\in\left[1,\frac1{1-\alpha}\right)$, thanks to the integrability properties of $f_\alpha$ given in \cref{exa:f_alpha}.

\smallskip

\textit{Step~2}.
We prove that~\eqref{eq:mu_radius} implies~\eqref{eq:u_alpha_integrability}.
To this aim, let $\delta > 0$ and $q=\frac p{p-1}$.
Since 
$|f_{\alpha}| = |f_{\alpha}|^{\frac{\delta}{q}} |f_{\alpha}|^{1 - \frac{\delta}{q}}$,
by H\"older's inequality we get
\begin{align*}
|u_{\alpha}(x)|^{p} 
& \le 
\left ( \int_{- \infty}^{\infty} |f_{\alpha}(x - y)|^{\frac{\delta}{q}}\, |f_{\alpha}(x - y)|^{1 - \frac{\delta}{q}} \, d |\nu|(y) \right )^{p} 
\\
& \le 
\left ( \int_{- \infty}^{\infty} |f_{\alpha}(x - y)|^{\delta} \, d |\nu|(y) \right )^{\frac{p}{q}} 
\left ( \int_{- \infty}^{\infty} |f_{\alpha}(x - y)|^{p \left (1 - \frac{\delta}{q} \right )} \, d |\nu|(y) \right )
\end{align*}
for $x\in\R$.
We now recall the explicit expression of $f_\alpha$ in \cref{exa:f_alpha} and write
\begin{equation}
\label{eq:u_alpha_splitting}
\begin{split}
\int_{- \infty}^{\infty} |f_{\alpha}(x - y)|^{\delta} \, d |\nu|(y) 
& = 
\int_{\left (- \infty, x - \frac{3}{2} \right ) \cup \left (x + \frac{1}{2}, \infty \right )} |f_{\alpha}(x - y)|^{\delta} \, d |\nu|(y) 
\\
& \quad + \sum_{j = 1}^{\infty} \int_{I_{j}\left (x, \frac{1}{2} \right ) \cup I_{j}\left (x - 1, \frac{1}{2} \right )} |f_{\alpha}(x - y)|^{\delta} \, d |\nu|(y),
\end{split}
\end{equation}
where we have set
\begin{equation*}
I_{j}(x, r) = (x - r^{j}, x + r^{j}) \setminus (x - r^{j + 1}, x + r^{j + 1}) 
\end{equation*}
for all $x\in\R$, $r\in(0,1)$ and $j\in\N$ for brevity.
Now, on the one hand, if 
$y \in \left (- \infty, x - \frac{3}{2} \right ) \cup \left (x + \frac{1}{2}, \infty \right )$, 
then $x - y \in \left (- \infty, - \frac{1}{2} \right ) \cup \left (\frac{3}{2}, \infty \right )$, so that
\begin{equation*}
|f_{\alpha}(x - y)| \le \mu_{1, - \alpha} \left ( 2^{1 - \alpha} + 2^{1 - \alpha} \right ) = \mu_{1, - \alpha} 2^{2 - \alpha}
\end{equation*}
for all 
$y \in \left (- \infty, x - \frac{3}{2} \right ) \cup \left (x + \frac{1}{2}, \infty \right )$.
Therefore, we can estimate
\begin{equation}
\label{eq:u_alpha_split_est1}
\int_{\left (- \infty, x - \frac{3}{2} \right ) \cup \left (x + \frac{1}{2}, \infty \right )} |f_{\alpha}(x - y)|^{\delta} \, d |\nu|(y) 
\le 
\left (\mu_{1, - \alpha} 2^{2 - \alpha} \right )^{\delta} |\nu|(\R)
\end{equation}
for all $x\in\R$.
On the other hand, 
for all $x \in \R$ and $j \in \N$, 
we have
\begin{align}
\int_{I_{j}\left (x, \frac{1}{2} \right)} |f_{\alpha}(x - y)|^{\delta} \, d |\nu|(y) 
& \le 
\mu_{1, - \alpha}^{\delta} \int_{I_{j}\left (x, \frac{1}{2} \right)} \left ( |x - y|^{\alpha - 1} + |x - y - 1|^{\alpha - 1} \right )^{\delta} \, d |\nu|(y) 
\nonumber\\
& \le \mu_{1, - \alpha}^{\delta} \left ( 2^{(j + 1)(1 - \alpha)} + \left ( 1 - 2^{-j} \right )^{\alpha - 1} \right )^{\delta} |\nu|\left ( (x - 2^{-j}, x + 2^{j} ) \right ) 
\nonumber\\
& \le \label{eq:u_alpha_split_est2} 
\mu_{1, - \alpha}^{\delta} \left ( 2^{(j + 1)(1 - \alpha)} + 2^{1 - \alpha} \right )^{\delta} C \, 2^{- j \eps}.
\end{align}
Reasoning analogously, we obtain
\begin{equation}
\label{eq:u_alpha_split_est3}
\int_{I_{j}\left (x- 1, \frac{1}{2} \right)} |f_{\alpha}(x - y)|^{\delta} \, d |\nu|(y) \le C \mu_{1, - \alpha}^{\delta} \left ( 2^{(j + 1)(1 - \alpha)} + 2^{1 - \alpha} \right )^{\delta} 2^{- j \eps}
\end{equation}
for all $x \in \R$ and $j \in \N$.
Therefore, inserting~\eqref{eq:u_alpha_split_est1}, \eqref{eq:u_alpha_split_est2} and~\eqref{eq:u_alpha_split_est3} in~\eqref{eq:u_alpha_splitting}, we conclude that 
\begin{equation} 
\label{eq:f_alpha_delta_bound}
\int_{- \infty}^{\infty} |f_{\alpha}(x - y)|^{\delta} \, d |\nu|(y) 
\le 
C_{\alpha, \eps, \delta}
\end{equation}
for all $x\in\R$, where $C_{\alpha, \eps, \delta}>0$ is constant depending on~$\alpha$, $\eps$, and~$\delta$ which is finite provided that we choose $\delta <\frac\eps{1-\alpha}$, as we are assuming from now on.
We thus get
\begin{align*}
\int_{- \infty}^{\infty} |u_{\alpha}(x)|^{p} \, dx 
& \le 
C_{\alpha, \eps, \delta}^{p - 1}\, 
\int_{- \infty}^{\infty} 
\int_{- \infty}^{\infty} 
|f_{\alpha}(x - y)|^{p \left (1 - \frac{\delta}{q} \right )} \, d |\nu|(y) \, dx 
\\
& = 
C_{\alpha, \eps, \delta}^{p - 1} 
\,
|\nu|(\R) 
\int_{- \infty}^{\infty} |f_{\alpha}(x)|^{p \left (1 - \frac{\delta}{q} \right )} \, dx.
\end{align*}
Now, recalling \cref{exa:f_alpha}, we immediately see that
\begin{equation}
\label{eq:f_alpha_int_conditions}
\int_{- \infty}^{\infty} |f_{\alpha}(x)|^{p \left (1 - \frac{\delta}{q} \right )} \, dx < +\infty
\iff
\begin{cases}
p < \frac{1}{(1 - \alpha) (1 - \delta)} - \frac{\delta}{1 - \delta} = \frac{1 - \delta + \alpha \delta}{(1 - \alpha) (1 - \delta)}, 
\\[3mm]
p > \frac{1}{(2 - \alpha) (1 - \delta)} - \frac{\delta}{1 - \delta} = \frac{1 - 2 \delta + \alpha \delta}{(2 - \alpha) (1 - \delta)}.
\end{cases}
\end{equation}
Since one easily recognizes that
\begin{equation*}
 \frac{1 - 2 \delta + \alpha \delta}{(2 - \alpha) (1 - \delta)} < 1  
\quad 
\text{for all}\ \alpha \in (0, 1) \text{ and } \delta > 0,
\end{equation*}
the second condition on~$p$ in~\eqref{eq:f_alpha_int_conditions} can be dropped.
As for the first condition on~$p$ in~\eqref{eq:f_alpha_int_conditions}, 
it is readily seen that
\begin{equation*}
\eps \in (0, 1 - \alpha)
\implies
\delta < \frac{\eps}{1 - \alpha} < 1
\implies
p \in \left [1, \frac{1 - \eps}{1 - \alpha - \eps}\right)
\end{equation*}
and, similarly,
\begin{equation*}
\eps \in [1 - \alpha, 1]
\implies
\delta (1 - \alpha) < \eps\
\text{for all}\ 
\delta\in(0,1)
\implies
p \in [1, +\infty).
\end{equation*}
Finally, in the case $\eps \in (1 - \alpha, 1]$, we exploit \eqref{eq:f_alpha_delta_bound} for $\delta = 1$ in order to conclude that
\begin{equation*}
|u_{\alpha}(x)| 
\le 
\int_{- \infty}^{\infty} |f_{\alpha}(x - y)| \, d |\nu|(y) 
= 
C_{\alpha, \eps} 
< 
+\infty
\end{equation*}
for all $x\in\R$, which implies that $u_{\alpha} \in L^{\infty}(\R)$.
The conclusion thus follows.
\end{proof}

Thanks to \cref{res:u_alpha}, we can now give the following example.

\begin{example}[\cref{res:abs_continuity_p} is sharp for $n=1$]
Let $\alpha\in(0,1)$ and let $\nu$ and $u_\alpha$ be as in \cref{res:u_alpha}.
By \cite{F14}*{Corollary 4.12}, there exists a compact set $K\subset\R$ such that $\nu = \Haus{\eps} \res K$, so that $D^{\alpha} u_{\alpha}\not\ll\Haus{s}$ for all $s > \eps$. 
Now we observe that, by~\eqref{eq:u_alpha_integrability}, we have the following situations:
\begin{itemize}

\item
if $\eps \in (0, 1 - \alpha)$, then
$p < \frac{1 - \eps}{1 - \alpha - \eps} < \frac{1}{1 - \alpha - \eps}$
and thus 
$\eps > \frac{1}{q} - \alpha$;

\item
if $\eps = 1 - \alpha$, then $p \in [1, +\infty)$ and thus $\eps > \frac{1}{q} - \alpha$;

\item
if $\eps \in (1 - \alpha, 1]$, then $p \in [1, +\infty]$ and so, for $p=+\infty$, if $s > 1 - \alpha$ then we can take $\eps \in (1 - \alpha, s)$.
\end{itemize}
Therefore, the absolute continuity property of the fractional variation stated in \cref{res:abs_frac_var}\eqref{item:abs_frac_var_supercritical} is sharp for $n=1$.
\end{example}

\section{Fractional capacity and precise representative}

\label{sec:capa_prec_rep}

In this last section, we study the fractional capacity and the existence of the precise representatives of $BV^{\alpha,p}$ functions. 

\subsection{The \texorpdfstring{$(\alpha,p)$}{(alpha,p)}-capacity}

We begin with the definition of \emph{fractional capacity}, see~\cite{AH96}*{Chapter~2}.
For the classical integer case $\alpha=1$, we also refer the reader to~\cite{EG15}*{Sections~4.7 and~5.6.3}, \cite{HKM93}*{Chapter~2}, \cite{MZ97}*{Section~2.1} and~\cite{M11}*{Section~2.2}.
Here and in the following, we repeatedly use the identification $S^{\alpha,p}(\R^n)=L^{\alpha,p}(\R^n)$ for $\alpha\in(0,1)$ and $p\in(1,+\infty)$ proved in~\cite{BCCS20}*{Corollary~2.1}.  

\begin{definition}[The $(\alpha,p)$-capacity]
Let $\alpha\in(0,1)$ and $p\in[1,+\infty)$.
We let 
\begin{equation*}
\Capa_{\alpha,p}(K)
=
\inf
\set*{\|f\|_{S^{\alpha,p}(\R^n)}^p : f\in C^\infty_c(\R^n),\ f\ge\chi_K}
\end{equation*}
be the \emph{$(\alpha,p)$-capacity} of the compact set $K\subset\R^n$.
\end{definition}

The mapping $\Capa_{\alpha,p}$ can be extended to more general sets via the following standard routine. 
If $A\subset\R^n$ is an open set, then we set 
\begin{equation*}
\Capa_{\alpha,p}(A)
=
\sup\set*{\Capa_{\alpha,p}(K) : K\subset A,\ K\ \text{compact}}
\end{equation*}
and so, given any set $E\subset\R^n$, we let
\begin{equation*}
\Capa_{\alpha,p}(E)
=
\sup\set*{\Capa_{\alpha,p}(A) : A\supset E,\ A\ \text{open}}.
\end{equation*}

We now recall the notion of \emph{$(\alpha,p)$-quasievery point}, see~\cite{AH96}*{Definition~2.2.5}.

\begin{definition}[$(\alpha,p)$-quasievery point]
Let $\alpha\in(0,1)$ and $p\in[1,+\infty)$.
We say that a property $\mathscr P(x)$ is true for \emph{$(\alpha,p)$-quasievery $x\in\R^n$} if
\begin{equation*}
\Capa_{\alpha,p}(\set*{x\in\R^n : \mathscr P(x)\ \text{is false}})=0.
\end{equation*}
\end{definition}

Recall that, if $\alpha\in(0,1)$ and $p\in\left(\frac n\alpha,+\infty\right)$, then $S^{\alpha,p}(\R^n)\subset C_b(\R^n)$ continuously by the fractional Sobolev Embedding Theorem, see~\cite{AH96}*{Theorem~1.2.4(c)} for instance.
For this reason, the notion of $(\alpha,p)$-capacity becomes interesting only when $\alpha p\le n$ (see the discussion below~\cite{AH96}*{Proposition~2.6.1}). 
Precisely, if $\alpha\in(0,1)$ and $p\in\left(1,\frac n\alpha\right]$, then
$
\Haus{n-\alpha p+\eps}
\ll
\Capa_{\alpha,p}
$
for all $\eps>0$, see~\cite{AH96}*{Theorem~5.1.13 and Corollary~5.1.14}.
 
\subsection{The precise representative}

We now study the precise representatives of $BV^{\alpha,p}$ functions by combining the embedding proved in \cref{res:BV_alpha_p_in_S_beta_q} with the results already known in the literature for the precise representatives of functions in fractional Bessel potential spaces.

We begin by recalling the definition of \emph{quasicontinuity}.
For the integer case $\alpha=1$, we refer the reader to~\cite{AH96}*{Definition~6.1.1} and~\cite{EG15}*{Definition~4.11}.

\begin{definition}[$(\alpha,p)$-quasicontinuity]
We say that a function $f\colon\R^n\to[-\infty,+\infty]$ defined $(\alpha,p)$-quasieverywhere is \emph{$(\alpha,p)$-quasicontinuous} if, for each $\eps>0$, there exists an open set $A_\eps\subset\R^n$ such that 
$\Capa_{\alpha,p}(A_\eps)<\eps$ and
$f|_{\R^n\setminus A_\eps}$ is continuous.
\end{definition}

Here and in the following, the \emph{precise representative} of a function $u\in L^1_{\loc}(\R^n;\R^m)$ is defined as
\begin{equation*}
u^\star(x)
=
\lim_{r\to0^+}\aint_{B_r(x)} u(y)\,dy,
\quad
x\in\R^n,
\end{equation*}
if the limit exists, otherwise $u^\star(x)=0$ by convention.
The following result provides a precise description of the continuity properties of the precise representative of a function in $S^{\alpha,p}(\R^n)$ for $p\in\left(1,\frac n\alpha\right]$. 
We refer the reader to~\cite{AH96}*{Theorem~6.2.1} for the proof. 

\begin{theorem}[Quasicontinuity of $S^{\alpha,p}$ functions]
\label{res:quasicont_S_alpha_p}
Let $\alpha\in(0,1)$ and 
$p\in\left(1,\frac n\alpha\right]$. 
If $f\in S^{\alpha,p}(\R^n)$, then~$f^\star$ is an $(\alpha,p)$-quasicontinuous representative of~$f$ and 
\begin{equation*}
\lim_{r\to0^+}\aint_{B_r(x)} |f(y)-f^\star(x)|^q\,dy
=0
\end{equation*}
for $(\alpha,p)$-quasievery $x\in\R^n$, where
\begin{equation*}
q\in
\begin{cases}
\left[1,\frac{np}{n-\alpha p}\right]
& \text{if}\ \alpha p<n,\\[3mm]
[1,+\infty)
& \text{if}\ \alpha p=n.
\end{cases}
\end{equation*}
\end{theorem}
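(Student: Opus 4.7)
The plan is to prove \cref{res:quasicont_S_alpha_p} by combining the representation of $S^{\alpha,p}(\R^n)$ as the Bessel potential space $L^{\alpha,p}(\R^n)$ with a Lebesgue-differentiation argument at the capacitary level. First I would use the identification $S^{\alpha,p}(\R^n)=L^{\alpha,p}(\R^n)$ from \cite{BCCS20}*{Corollary~2.1} to write every $f\in S^{\alpha,p}(\R^n)$ in the form $f=G_\alpha * g$, where $G_\alpha$ is the Bessel kernel of order $\alpha$ and $g\in L^p(\R^n)$ satisfies $\|g\|_{L^p(\R^n)}\simeq\|f\|_{S^{\alpha,p}(\R^n)}$. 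The goal then becomes to prove a pointwise Lebesgue-type property for Bessel potentials of $L^p$ functions.

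The key analytic ingredient I would establish is a capacitary weak-type inequality of Hedberg--Wolff type: for every $\lambda>0$,
\begin{equation*}
\Capa_{\alpha,p}\!\left(\set*{x\in\R^n : \mathcal M_\alpha g(x)>\lambda}\right)
\le C_{n,\alpha,p}\,\lambda^{-p}\,\|g\|_{L^p(\R^n)}^p,
\end{equation*}
where $\mathcal M_\alpha g$ is a suitable fractional maximal operator controlling the oscillation of $G_\alpha*g$ on balls. From this, a standard Borel--Cantelli argument (applied to a sequence $\lambda_k\to 0$) will yield that the limit
\begin{equation*}
f^\star(x)=\lim_{r\to0^+}\aint_{B_r(x)}f(y)\,dy
\end{equation*}
exists and defines an $(\alpha,p)$-quasicontinuous representative of~$f$. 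The quasicontinuity itself I would prove by approximating $f$ in $S^{\alpha,p}$-norm by a sequence $(f_k)\subset C^\infty_c(\R^n)$ chosen so that $\|f_{k+1}-f_k\|_{S^{\alpha,p}(\R^n)}\le 2^{-k}$, and then applying the capacitary Chebyshev inequality
\begin{equation*}
\Capa_{\alpha,p}\!\left(\set*{|f_{k+1}^\star-f_k^\star|>2^{-k/2}}\right)
\le 2^{kp/2}\,\|f_{k+1}-f_k\|_{S^{\alpha,p}(\R^n)}^p\le 2^{-kp/2}.
\end{equation*}
The union of these super-level sets, starting from a large index $N$, has arbitrarily small $(\alpha,p)$-capacity; removing it leaves a set on which the continuous functions $f_k^\star$ converge uniformly to $f^\star$, whence $f^\star$ is continuous there.

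Finally, to upgrade the almost-everywhere convergence to the strong averaged convergence with exponent $q\in\left[1,\frac{np}{n-\alpha p}\right]$ (or any $q<\infty$ in the borderline $\alpha p=n$), I would invoke the fractional Sobolev embedding $S^{\alpha,p}(\R^n)\subset L^q(\R^n)$ applied to the translated function $f-f^\star(x)$ on balls, together with the same capacitary maximal inequality now for $|f-f^\star(x)|^q$-averages. Specifically, choosing the exponent $q$ to match the Sobolev embedding allows one to control
\begin{equation*}
\left(\aint_{B_r(x)}|f(y)-f^\star(x)|^q\,dy\right)^{1/q}
\end{equation*}
by a quantity that vanishes as $r\to0^+$ outside an $(\alpha,p)$-null set. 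The main obstacle in this program is the capacitary weak-type inequality: this requires either Hedberg's pointwise inequality $G_\alpha*|g|\lesssim (Mg)^{1-\theta}(I_\alpha|g|)^\theta$ combined with Muckenhoupt--Wheeden bounds on Riesz potentials, or the dual characterisation of $\Capa_{\alpha,p}$ via non-linear potentials; this harmonic-analytic step is the substantive content, while all other parts are soft arguments based on approximation and Borel--Cantelli.
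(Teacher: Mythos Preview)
The paper does not supply its own proof of \cref{res:quasicont_S_alpha_p}: it simply quotes the result from \cite{AH96}*{Theorem~6.2.1}, relying on the identification $S^{\alpha,p}(\R^n)=L^{\alpha,p}(\R^n)$ established in~\cite{BCCS20}*{Corollary~2.1}. Your proposal is essentially a sketch of the classical Adams--Hedberg argument behind that citation (Bessel-potential representation $f=G_\alpha*g$, capacitary weak-type bound for the relevant maximal operator, the $C^\infty_c$-approximation plus Borel--Cantelli route to quasicontinuity), so you are reproducing the same approach rather than a different one.

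One small caution: your final step, upgrading to $L^q$-averaged convergence by ``applying the Sobolev embedding to $f-f^\star(x)$ on balls together with the same capacitary maximal inequality for $|f-f^\star(x)|^q$-averages'', is too vague as written. In the Adams--Hedberg proof this step is handled not by a localized Sobolev embedding but by splitting $g=g_k+(g-g_k)$ with $g_k\in C_c(\R^n)$, using that $G_\alpha*g_k$ is continuous, and controlling the bad part via the capacitary weak-type estimate for $\sup_{r>0}\big(\aint_{B_r(x)}|G_\alpha*(g-g_k)|^q\,dy\big)^{1/q}$, which in turn relies on the pointwise inequality $|G_\alpha*h|\le G_\alpha*|h|$ and the boundedness of the Hardy--Littlewood maximal function on $L^q$. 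If you make this precise your argument goes through; as stated, the ``Sobolev embedding on balls'' formulation does not directly give the desired conclusion.
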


Thanks to the embedding proved in \cref{res:BV_alpha_p_in_S_beta_q}, we immediately deduce the following result concerning the quasicontinuity of the functions in $BV^{\alpha,p}(\R^n)$.

\begin{corollary}[Quasicontinuity of $BV^{\alpha,p}$ functions for $p<\frac n{n-\alpha}$] \label{res:quasicont_p_special}
Let $\alpha,\beta\in(0,1)$, with $\beta<\alpha$, and let $p, q \in[1,+\infty]$ be such that $p\le q <\frac n{n+\beta-\alpha}$, with $q>1$.
If $f\in BV^{\alpha,p}(\R^n)$, then $f^\star$ is a $(\beta,q)$-quasicontinuous representative of~$f$ and 
\begin{equation*}
\lim_{r\to0^+}\aint_{B_r(x)} |f(y)-f^\star(x)|^{t}\,dy
=0
\end{equation*}	
for $(\beta,q)$-quasievery $x\in\R^n$ and for all $t\in\left[1,\frac{nq}{n-\beta q}\right]$.
\end{corollary}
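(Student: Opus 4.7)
The proof will be a direct synthesis of the two earlier results in the paper: the embedding $BV^{\alpha,p}\subset S^{\beta,q}$ provided by \cref{res:BV_alpha_p_in_S_beta_q} and the quasicontinuity of Bessel potential functions provided by \cref{res:quasicont_S_alpha_p}. There is essentially no new work to do; the only thing that requires a short verification is that the numerical range of parameters allowed in the corollary stays within the range where \cref{res:quasicont_S_alpha_p} is applicable.

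First, under the hypotheses $\beta<\alpha$ and $p\le q<\frac n{n+\beta-\alpha}$, I will invoke \cref{res:BV_alpha_p_in_S_beta_q} to conclude that $f\in S^{\beta,q}(\R^n)$. Since in addition $q\in(1,+\infty)$ (note that the upper bound $\frac n{n+\beta-\alpha}$ is finite, so $q<+\infty$ automatically), the identification $S^{\beta,q}(\R^n)=L^{\beta,q}(\R^n)$ (recalled from~\cite{BCCS20}*{Corollary~2.1}) makes $f$ an element of the fractional Bessel potential space of order $\beta$ and exponent $q$.

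Second, I will check that the hypotheses of \cref{res:quasicont_S_alpha_p} are met by $(\beta,q)$. We need $\beta\in(0,1)$, which is assumed, and $q\in\left(1,\tfrac n\beta\right]$. The lower bound is given; for the upper bound, note that $n+\beta-\alpha>\beta$ is equivalent to $n>\alpha$, which holds since $\alpha<1\le n$, hence $q<\frac n{n+\beta-\alpha}<\frac n\beta$, so in fact $\beta q<n$. Thus \cref{res:quasicont_S_alpha_p} applies to $f$ with parameters $(\beta,q)$ in the strictly subcritical regime $\beta q<n$, giving that $f^\star$ is a $(\beta,q)$-quasicontinuous representative of $f$ and that
\begin{equation*}
\lim_{r\to0^+}\aint_{B_r(x)}|f(y)-f^\star(x)|^t\,dy=0
\end{equation*}
for $(\beta,q)$-quasievery $x\in\R^n$, for every $t\in\left[1,\tfrac{nq}{n-\beta q}\right]$, which matches the exponent range claimed.

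There is no serious obstacle: the corollary is designed precisely so that the embedding step produces a function in $S^{\beta,q}$ with $\beta q<n$, and then the known theory of Bessel potential spaces finishes the job. The only delicate point, and the one to write out carefully, is the arithmetic chain $q<\frac{n}{n+\beta-\alpha}<\frac{n}{\beta}$ guaranteeing that we remain in the regime covered by \cref{res:quasicont_S_alpha_p} and that the target integrability $\frac{nq}{n-\beta q}$ is well defined and finite.
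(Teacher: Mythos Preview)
Your proof is correct and matches the paper's approach exactly: the paper states that \cref{res:quasicont_p_special} follows immediately from the embedding in \cref{res:BV_alpha_p_in_S_beta_q} combined with \cref{res:quasicont_S_alpha_p}, and you have written out precisely this argument, including the parameter check $q<\frac n{n+\beta-\alpha}<\frac n\beta$ that keeps you in the subcritical regime $\beta q<n$.
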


In order to provide an extension of \cref{res:quasicont_p_special} also for all exponents $p\in[1,+\infty]$, we need the following localization result for $BV^{\alpha,p}$ functions.

\begin{lemma}[Localization for $BV^{\alpha,p}$ functions for $p\in{[}1,+\infty{]}$]
\label{res:localization}
Let $\alpha\in(0,1)$ and let $p\in[1,+\infty]$.
If $f\in BV^{\alpha,p}(\R^n)$ and $\eta\in\Lip_c(\R^n)$, then $f\eta\in BV^{\alpha,q}(\R^n)$ for all $q\in[1,p]$, with
\begin{equation}
\label{eq:localization}
D^\alpha(f\eta)
=
\eta\,D^\alpha f
+
f\,\nabla^\alpha\eta
\,\Leb{n}
+
\nabla^\alpha_{\rm NL}(f,\eta)
\,\Leb{n}
\quad
\text{in}\
\M (\R^n;\R^n).
\end{equation}
\end{lemma}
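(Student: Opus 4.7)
My plan is to establish the identity~\eqref{eq:localization} directly, by testing against an arbitrary $\phi\in C^\infty_c(\R^n;\R^n)$ and invoking \cref{th:structure_BV_alpha}. The three ingredients I intend to combine are: the pointwise Leibniz rule for the fractional divergence, namely $\div^\alpha(\eta\phi) = \eta\,\div^\alpha\phi + \phi\cdot\nabla^\alpha\eta + \div^{\alpha}_{\rm NL}(\eta,\phi)$ from~\cite{CS19-2}*{Lemma~2.5}; the Lipschitz-test integration-by-parts formula \cref{res:Lip_test_BV_alpha_p}, applied to $\eta\phi \in \Lip_c(\R^n;\R^n) \subset W^{1,q}(\R^n;\R^n) \cap C_b(\R^n;\R^n)$ for every $q$; and the duality identity
\begin{equation*}
\int_{\R^n} f\,\div^{\alpha}_{\rm NL}(\eta,\phi)\,dx = \int_{\R^n}\phi\cdot\nabla^{\alpha}_{\rm NL}(f,\eta)\,dx.
\end{equation*}
Once these are in hand, substituting the Leibniz rule into $\int f\eta\,\div^\alpha\phi\,dx$ and using the duality identity to rewrite the non-local contribution will give $\int f\eta\,\div^\alpha\phi\,dx = -\int \phi\cdot d\bigl(\eta\,D^\alpha f + f\nabla^\alpha\eta\,\Leb{n} + \nabla^{\alpha}_{\rm NL}(f,\eta)\,\Leb{n}\bigr)$, whence \cref{th:structure_BV_alpha} yields the claim.

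Before that, I will verify the relevant integrability. The membership $f\eta\in L^q(\R^n)$ for every $q\in[1,p]$ is immediate, since $f\eta$ lies in $L^p(\R^n)$ and has compact support. Each summand on the right of~\eqref{eq:localization} defines a finite $\R^n$-valued Radon measure: clearly $|\eta\,D^\alpha f|\le\|\eta\|_{L^\infty(\R^n)}|D^\alpha f|$; the product $f\,\nabla^\alpha\eta$ lies in $L^1(\R^n;\R^n)$ by H\"older's inequality, since $\nabla^\alpha\eta\in L^{p/(p-1)}(\R^n;\R^n)$ by~\cite{CS19}*{Corollary~2.3}; and $\nabla^{\alpha}_{\rm NL}(f,\eta)\in L^1(\R^n;\R^n)$ follows from the pointwise bound
\begin{equation*}
|\nabla^{\alpha}_{\rm NL}(f,\eta)(x)| \le \mu_{n,\alpha}\int_{\R^n}\frac{(|f(x)|+|f(y)|)\,|\eta(y)-\eta(x)|}{|y-x|^{n+\alpha}}\,dy,
\end{equation*}
combined with Fubini and the inequality $\int_{\R^n}|f|\,\mathcal{D}^\alpha\eta\,dx\le\|f\|_{L^p(\R^n)}\|\mathcal{D}^\alpha\eta\|_{L^{p/(p-1)}(\R^n)}<+\infty$, valid since the quantity $\mathcal{D}^\alpha\eta$ defined in~\eqref{eq:def_D_alpha} belongs to every $L^r(\R^n)$.

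The main technical step is the duality identity, which I plan to prove by a Fubini symmetrization: swapping the variables $x\leftrightarrow y$ and averaging recasts both sides as
\begin{equation*}
\frac{\mu_{n,\alpha}}{2}\iint_{\R^n\times\R^n}\frac{(y-x)\cdot(\phi(y)-\phi(x))(\eta(y)-\eta(x))(f(x)-f(y))}{|y-x|^{n+\alpha+1}}\,dx\,dy.
\end{equation*}
The main obstacle is precisely the justification of Fubini on this symmetric double integral. The requisite absolute-integrability check reduces, via the Lipschitz-and-bounded character of both $\eta$ and $\phi$ and the pointwise splitting $|f(x)-f(y)|\le|f(x)|+|f(y)|$, to the same estimates that underpin the $L^1$ bounds for $f\,\nabla^\alpha\eta$ and $\nabla^{\alpha}_{\rm NL}(f,\eta)$ secured in the previous paragraph. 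With the duality identity in place, the assembly of the three ingredients delivers~\eqref{eq:localization} and, together with the integrability of the summands, shows that $f\eta\in BV^{\alpha,q}(\R^n)$ for all $q\in[1,p]$.
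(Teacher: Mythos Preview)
Your argument is correct and follows the same overall architecture as the paper's proof: expand $\div^\alpha(\eta\phi)$ via the Leibniz rule, integrate by parts against $\eta\phi\in\Lip_c(\R^n;\R^n)$, and handle the non-local remainder through the duality identity $\int_{\R^n} f\,\div^{\alpha}_{\rm NL}(\eta,\phi)\,dx = \int_{\R^n}\phi\cdot\nabla^{\alpha}_{\rm NL}(f,\eta)\,dx$.

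The one substantive difference lies in how you establish this last identity. The paper mollifies, setting $f_\eps=f*\rho_\eps\in\Lip_b(\R^n)$, invokes \cite{CS19-2}*{Lemmas~2.4 and~2.5} for $f_\eps$, and then passes to the limit using $\nabla^{\alpha}_{\rm NL}(f_\eps,\eta)\to\nabla^{\alpha}_{\rm NL}(f,\eta)$ in $L^1_{\loc}$. You instead prove the identity directly by the $x\leftrightarrow y$ symmetrization plus Fubini, with absolute integrability justified by $\int_{\R^n}|f|\,\mathcal D^\alpha\eta\,dx\le\|f\|_{L^p(\R^n)}\|\mathcal D^\alpha\eta\|_{L^{p'}(\R^n)}<+\infty$. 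Your route is slightly more elementary and self-contained (no approximation step), while the paper's route recycles identities already proved for regular functions; both are perfectly valid. Your $L^1$ bound for $\nabla^{\alpha}_{\rm NL}(f,\eta)$ is likewise a legitimate alternative to the paper's Minkowski-based estimate~\eqref{eq:KS_trick}.
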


\begin{proof}
By H\"older's inequality, we clearly have that $f\eta\in L^q(\R^n)$ for all $q\in[1,p]$, so that we only need to prove~\eqref{eq:localization}.
First of all, note that the right-hand side of~\eqref{eq:localization} is well posed because $\eta\in\Lip_c(\R^n)$. 
In particular, we have $\nabla^\alpha\eta\in L^1(\R^n;\R^n)\cap L^\infty(\R^n;\R^n)$ by~\cite{CS19}*{Corollary~2.3} and $\nabla^\alpha_{\rm NL}(f,\eta)\in L^1(\R^n)$, since by Minkowski's and H\"older's (generalized) inequalities we can estimate
\begin{equation}
\label{eq:KS_trick}
\begin{split}
\|\nabla^\alpha_{\rm NL}(f,\eta)\|_{L^1(\R^n;\R^n)}
&\le
\mu_{n,\alpha}
\int_{\R^n}
\frac{\||f(\cdot+h)-f|\,|\eta(\cdot+h)-\eta|\|_{L^1(\R^n)}}{|h|^{n+\alpha}}\,dh
\\
&\le
2\mu_{n,\alpha}
\,
\|f\|_{L^p(\R^n)}
\int_{\R^n}
\frac{\|\eta(\cdot+h)-\eta\|_{L^{p'}(\R^n)}}{|h|^{n+\alpha}}\,dh
\\
&\le
c_{n,\alpha,p}
\,
\|f\|_{L^p(\R^n)}
\,
\|\eta\|_{W^{1,{p'}}(\R^n)}
\end{split}
\end{equation}
(for the validity of the last inequality, see~\cite{Leoni17}*{Theorem~17.33} for instance).
Now let $\phi\in\Lip_c(\R^n;\R^n)$ be given.
By~\cite{CS19}*{Lemma~2.7}, we can write 
\begin{equation*}
\div^\alpha(\eta\phi)
=
\eta\,\div^\alpha\phi
+
\phi\cdot\nabla^\alpha\eta
+\div_{\rm NL}^\alpha(\eta,\phi),
\end{equation*} 
so that
\begin{align*}
\int_{\R^n}f\eta\,\div^\alpha\phi\,dx 
=
\int_{\R^n}f\,
\div^\alpha(\eta\phi)
\,dx
-
\int_{\R^n}
f\phi\cdot\nabla^\alpha\eta
\,dx
-
\int_{\R^n}
f\,\div^\alpha_{\rm NL}(\eta,\phi)\,
dx.
\end{align*}
In addition, since $\eta\phi\in\Lip_c(\R^n;\R^n)$ and $f\in BV^{\alpha,p}(\R^n)$, we immediately see that 
\begin{equation*}
\int_{\R^n}f\,
\div^\alpha(\eta\phi)
\,dx
=
-
\int_{\R^n}\eta\phi\cdot
dD^\alpha f.
\end{equation*}
Finally, let $(f_\eps)_{\eps>0}\subset BV^{\alpha,p}(\R^n)\cap C^\infty(\R^n)$ be such that $f_\eps=f*\rho_\eps$ for all $\eps>0$ as in \cref{res:moll_conv_alpha_p}. 
Note that $f_\eps\in\Lip_b(\R^n;\R^n)$, so that
\begin{equation*}
\int_{\R^n}
f_\eps\,\div^\alpha_{\rm NL}(\eta,\phi)\,
dx
=
\int_{\R^n}
\phi\cdot\nabla^\alpha_{\rm NL}(f_\eps,\eta)\,
dx.
\end{equation*}
for all $\eps>0$ by~\cite{CS19-2}*{Lemmas~2.4 and~2.5}.
Now, arguing as in the proof of~\eqref{eq:KS_trick}, we can infer that 
\begin{equation}
\label{eq:ibp_NL_eps_Lip_b}
\lim_{\eps\to0^+}
\nabla^\alpha_{\rm NL}(f_\eps,\eta)
=
\nabla^\alpha_{\rm NL}(f,\eta)
\quad
\text{in}\ L^1_{\loc}(\R^n;\R^n),
\end{equation}
so that we can pass to the limit as~$\eps\to0^+$ in~\eqref{eq:ibp_NL_eps_Lip_b} to get that
\begin{equation*}
\int_{\R^n}
f\,\div^\alpha_{\rm NL}(g,\phi)\,
dx
=
\int_{\R^n}
\phi\cdot\nabla^\alpha_{\rm NL}(f,g)\,
dx.
\end{equation*}
In conclusion, we have that 
\begin{equation*}
\int_{\R^n}f\eta\,\div^\alpha\phi\,dx 
=
-
\int_{\R^n}\eta\phi\cdot
dD^\alpha f
-
\int_{\R^n}
f\phi\cdot\nabla^\alpha\eta
\,dx
-
\int_{\R^n}
\phi\cdot\nabla^\alpha_{\rm NL}(f,\eta)\,
dx
\end{equation*}
for any given $\phi\in\Lip_c(\R^n;\R^n)$ and the proof is complete.
\end{proof}

\begin{corollary}[Quasicontinuity of $BV^{\alpha,p}$ functions for $p\in{[}1,+\infty{]}$] \label{res:quasicont_any_p}
Let $\alpha,\beta\in(0,1)$ be such that $\beta<\alpha$ and let $p\in\left[1, + \infty \right]$ and $q\in\left[1,\frac{n}{n+\beta-\alpha}\right)$.
If $f\in BV^{\alpha,p}(\R^n)$, then $f^\star$ is a $(\beta,q)$-quasicontinuous representative of~$f$ and 
\begin{equation} \label{eq:quasicont_any_p}
\lim_{r\to0^+}\aint_{B_r(x)} |f(y)-f^\star(x)|^{t}\,dy
=0
\end{equation}	
for $(\beta,q)$-quasievery $x\in\R^n$ and for all $t\in\left[1,\frac{nq}{n-\beta q}\right]$. 
In particular, the precise representative of $f$ is well defined $\Haus{n - \alpha + \eps}$-a.e.\ for all $\eps >0$.
\end{corollary}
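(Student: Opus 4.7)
The strategy is to reduce the general case $p\in[1,+\infty]$ to the already-established range covered by \cref{res:quasicont_p_special} (which requires $p\le q<\frac n{n+\beta-\alpha}$) via the localization \cref{res:localization}. First, I would fix a sequence of cutoffs $\eta_k\in\Lip_c(\R^n)$ with $\eta_k\equiv 1$ on $B_k$ and $\supp\eta_k\subset B_{k+1}$. By \cref{res:localization}, $f\eta_k\in BV^{\alpha,1}(\R^n)$ for every $k$, regardless of $p\in[1,+\infty]$. Applying \cref{res:quasicont_p_special} to each $f\eta_k$ with exponent $1$ in place of $p$ (using the given $q\in(1,\frac n{n+\beta-\alpha})$) yields, for each $k$, a Borel set $N_k$ of $(\beta,q)$-capacity zero and an open set $A_k^\eps$ of $(\beta,q)$-capacity less than $\eps/2^k$ such that the precise representative $(f\eta_k)^\star$ exists outside $N_k$, satisfies \eqref{eq:quasicont_any_p} (with $f$ replaced by $f\eta_k$) for every $x\notin N_k$ and every $t\in[1,\frac{nq}{n-\beta q}]$, and is continuous on $\R^n\setminus A_k^\eps$.

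The key geometric observation is that, whenever $x$ lies in the interior of $\{\eta_k=1\}$, the functions $f$ and $f\eta_k$ coincide in a whole neighborhood of $x$, so $f^\star(x)$ and $(f\eta_k)^\star(x)$ exist simultaneously and coincide, and the averaged $L^t$ limit \eqref{eq:quasicont_any_p} for $f$ at $x$ is equivalent to that for $f\eta_k$. Setting $N=\bigcup_k N_k$ gives a set of $(\beta,q)$-capacity zero by countable subadditivity, outside of which $f^\star$ exists and \eqref{eq:quasicont_any_p} holds. For quasicontinuity, setting $A^\eps=\bigcup_k A_k^\eps$ produces an open set with $\Capa_{\beta,q}(A^\eps)<\eps$ on whose complement $f^\star$ is continuous: indeed any $x\in\R^n\setminus A^\eps$ lies in some $B_k$, and the local identity $f^\star=(f\eta_k)^\star$ on $B_k$ transfers continuity of $(f\eta_k)^\star$ at $x$ to continuity of $f^\star$ at $x$, since continuity is a local property.

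For the final $\Haus{n-\alpha+\eps}$-a.e.\ statement, given $\eps>0$, I would choose $\beta\in(0,\alpha)$ sufficiently close to $\alpha$ and $q\in(1,\frac n{n+\beta-\alpha})$ sufficiently close to the endpoint so that $\beta q>\alpha-\eps/2$, and then invoke the comparison $\Haus{n-\beta q+\eps'}\ll\Capa_{\beta,q}$ recalled in the preliminaries on fractional capacity (see \cite{AH96}*{Theorem~5.1.13}). With $\eps'$ small enough that $n-\beta q+\eps'\le n-\alpha+\eps$, the $(\beta,q)$-capacity-zero exceptional set $N$ is automatically $\Haus{n-\alpha+\eps}$-negligible, proving the last claim.

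The main obstacle I anticipate is the boundary case $q=1$, which is formally admitted in the hypotheses of \cref{res:quasicont_any_p} but excluded from \cref{res:quasicont_p_special}. To cover it one needs either a monotonicity property allowing $(\beta,1)$-capacity null sets to be controlled by $(\beta,q')$-capacity null sets for some $q'\in(1,\frac n{n+\beta-\alpha})$, or a direct quasicontinuity statement in $S^{\beta,1}(\R^n)$; in both cases the argument above still applies after a preliminary reduction. A lesser technical point is the careful stitching of the exceptional open sets $A_k^\eps$, where the choice of the geometric weights $\eps/2^k$ and the use of countable subadditivity of $\Capa_{\beta,q}$ (which is outer and countably subadditive on Borel sets) are essential.
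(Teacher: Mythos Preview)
Your proposal is correct and follows essentially the same approach as the paper: reduce to $BV^{\alpha,1}$ via the localization \cref{res:localization} with compactly supported cutoffs, apply \cref{res:quasicont_p_special} on each localized piece, and then optimize in $\beta$ and $q$ for the Hausdorff claim. Your treatment is in fact more careful than the paper's terse version, both in the explicit countable gluing of exceptional sets and in flagging the boundary case $q=1$, which the paper's proof likewise does not address directly.
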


\begin{proof}
By \cref{res:localization}, we know that $f\eta \in BV^{\alpha, 1}(\R^n)$ for all $\eta \in \Lip_c(\R^n)$. 
Hence, \cref{res:quasicont_p_special} implies the existence of a $(\beta,q)$-quasicontinuous representative of $f\eta$ for all $\beta \in (0, \alpha)$ and $q\in\left (1,\frac{n}{n+\beta-\alpha}\right)$. 
In particular, if $\eta(x)=1$ for all $x\in B_R$ for some given $R>0$, then we get the existence of $(f\eta)^\star(x) = f^\star(x)$ for $(\beta,q)$-quasievery $x\in\R^n$, together with \eqref{eq:quasicont_any_p}. Since $R > 0$ is arbitrary, $f^\star(x)$ must exist for $(\beta,q)$-quasievery $x\in\R^n$. 
Finally, since $q < \frac{n}{\beta}$, we have
$
\Haus{n-\beta q+\delta}
\ll
\Capa_{\beta,q}
$
for all $\delta>0$.
Thus, by optimizing in $\beta \in (0, \alpha)$ and in $q\in\left (1,\frac{n}{n+\beta-\alpha}\right)$, the existence of $f^\star(x)$ follows for $\Haus{n - \alpha + \eps}$-a.e.\ $x \in \R^n$ and the proof is complete.
\end{proof}


\begin{bibdiv}
\begin{biblist}

\bib{AH96}{book}{
   author={Adams, David R.},
   author={Hedberg, Lars Inge},
   title={Function spaces and potential theory},
   series={Grundlehren der Mathematischen Wissenschaften [Fundamental Principles of Mathematical Sciences]},
   volume={314},
   publisher={Springer-Verlag, Berlin},
   date={1996},
}

\bib{A77}{article}{
   author={Alvino, Angelo},
   title={Sulla diseguaglianza di Sobolev in spazi di Lorentz},
   journal={Boll. Un. Mat. Ital. A (5)},
   volume={14},
   date={1977},
   number={1},
   pages={148--156},
}

\bib{AFP00}{book}{
   author={Ambrosio, Luigi},
   author={Fusco, Nicola},
   author={Pallara, Diego},
   title={Functions of bounded variation and free discontinuity problems},
   series={Oxford Mathematical Monographs},
   publisher={The Clarendon Press, Oxford University Press, New York},
   date={2000},
}

\bib{BCM20}{article}{
   author={Bellido, Jos\'{e} C.},
   author={Cueto, Javier},
   author={Mora-Corral, Carlos},
   title={Fractional Piola identity and polyconvexity in fractional spaces},
   journal={Ann. Inst. H. Poincar\'{e} Anal. Non Lin\'{e}aire},
   volume={37},
   date={2020},
   number={4},
   pages={955--981},
}

\bib{BCM21}{article}{
   author={Bellido, Jos\'{e} C.},
   author={Cueto, Javier},
   author={Mora-Corral, Carlos},
   title={$\Gamma $-convergence of polyconvex functionals involving $s$-fractional gradients to their local counterparts},
   journal={Calc. Var. Partial Differential Equations},
   volume={60},
   date={2021},
   number={1},
   pages={Paper No. 7, 29},
}

\bib{BCCS20}{article}{
  title={A distributional approach to fractional Sobolev spaces and fractional variation: asymptotics II},
  author={Bru{\`e}, Elia},
  author={Calzi, Mattia},
  author={Comi, Giovanni E.},
  author={Stefani, Giorgio},
  eprint={https://arxiv.org/abs/2011.03928},
  year={2020},
  status={to appear on C. R. Math.}
}

\bib{CS19}{article}{
   author={Comi, Giovanni E.},
   author={Stefani, Giorgio},
   title={A distributional approach to fractional Sobolev spaces and
   fractional variation: Existence of blow-up},
   journal={J. Funct. Anal.},
   volume={277},
   date={2019},
   number={10},
   pages={3373--3435},
}

\bib{CS19-2}{article}{
   author={Comi, Giovanni E.},
   author={Stefani, Giorgio},
   title={A distributional approach to fractional Sobolev spaces and fractional variation: Asymptotics~I},
   date={2019},
   status={preprint},
   eprint={https://arxiv.org/abs/1910.13419},
}

\bib{CS21}{article}{
   author={Comi, Giovanni E.},
   author={Stefani, Giorgio},
   title={Leibniz rules and Gauss--Green formulas in distributional fractional spaces},
   date={2021},
   status={preprint},
   eprint={https://arxiv.org/abs/2111.13942}
}

\bib{DiNPV12}{article}{
   author={Di Nezza, Eleonora},
   author={Palatucci, Giampiero},
   author={Valdinoci, Enrico},
   title={Hitchhiker's guide to the fractional Sobolev spaces},
   journal={Bull. Sci. Math.},
   volume={136},
   date={2012},
   number={5},
   pages={521--573},
}

\bib{EG15}{book}{
   author={Evans, Lawrence C.},
   author={Gariepy, Ronald F.},
   title={Measure theory and fine properties of functions},
   series={Textbooks in Mathematics},
   edition={Revised edition},
   publisher={CRC Press, Boca Raton, FL},
   date={2015}
}

\bib{F14}{book}{
   author={Falconer, Kenneth},
   title={Fractal geometry},
   edition={3},
   note={Mathematical foundations and applications},
   publisher={John Wiley \& Sons, Ltd., Chichester},
   date={2014},
}

\bib{G14-C}{book}{
   author={Grafakos, Loukas},
   title={Classical Fourier analysis},
   series={Graduate Texts in Mathematics},
   volume={249},
   edition={3},
   publisher={Springer, New York},
   date={2014},
}

\bib{G14-M}{book}{
   author={Grafakos, Loukas},
   title={Modern Fourier analysis},
   series={Graduate Texts in Mathematics},
   volume={250},
   edition={3},
   publisher={Springer, New York},
   date={2014},
}

\bib{HKM93}{book}{
   author={Heinonen, Juha},
   author={Kilpel\"{a}inen, Tero},
   author={Martio, Olli},
   title={Nonlinear potential theory of degenerate elliptic equations},
   note={Unabridged republication of the 1993 original},
   publisher={Dover Publications, Inc., Mineola, NY},
   date={2006},
}

\bib{H59}{article}{
   author={Horv\'ath, J.},
   title={On some composition formulas},
   journal={Proc. Amer. Math. Soc.},
   volume={10},
   date={1959},
   pages={433--437},
}

\bib{KS21}{article}{
   author={Kreisbeck, Carolin},
   author={Sch\"{o}nberger, Hidde},
   title={Quasiconvexity in the fractional calculus of variations:
   Characterization of lower semicontinuity and relaxation},
   journal={Nonlinear Anal.},
   volume={215},
   date={2022},
   pages={Paper No. 112625},
}

\bib{Leoni17}{book}{
   author={Leoni, Giovanni},
   title={A first course in Sobolev spaces},
   series={Graduate Studies in Mathematics},
   volume={181},
   edition={2},
   publisher={American Mathematical Society, Providence, RI},
   date={2017},
}

\bib{LX21}{article}{
   author={Liu, Liguang},
   author={Xiao, Jie},
   title={Fractional Hardy-Sobolev $L^1$-embedding per capacity-duality},
   journal={Appl. Comput. Harmon. Anal.},
   volume={51},
   date={2021},
   pages={17--55},
}

\bib{M12}{book}{
   author={Maggi, Francesco},
   title={Sets of finite perimeter and geometric variational problems},
   series={Cambridge Studies in Advanced Mathematics},
   volume={135},
   publisher={Cambridge University Press, Cambridge},
   date={2012},
}

\bib{MZ97}{book}{
   author={Mal\'{y}, Jan},
   author={Ziemer, William P.},
   title={Fine regularity of solutions of elliptic partial differential
   equations},
   series={Mathematical Surveys and Monographs},
   volume={51},
   publisher={American Mathematical Society, Providence, RI},
   date={1997},
}

\bib{M11}{book}{
   author={Maz'ya, Vladimir},
   title={Sobolev spaces with applications to elliptic partial differential equations},
   series={Grundlehren der Mathematischen Wissenschaften [Fundamental Principles of Mathematical Sciences]},
   volume={342},
   edition={Second, revised and augmented edition},
   publisher={Springer, Heidelberg},
   date={2011},
}

\bib{O63}{article}{
   author={O'Neil, Richard},
   title={Convolution operators and $L(p,\,q)$ spaces},
   journal={Duke Math. J.},
   volume={30},
   date={1963},
   pages={129--142},
}

\bib{P16}{book}{
   author={Ponce, Augusto C.},
   title={Elliptic PDEs, measures and capacities},
   series={EMS Tracts in Mathematics},
   volume={23},
   publisher={European Mathematical Society (EMS), Z\"{u}rich},
   date={2016},
}

\bib{SSS15}{article}{
   author={Schikorra, Armin},
   author={Shieh, Tien-Tsan},
   author={Spector, Daniel},
   title={$L^p$ theory for fractional gradient PDE with $VMO$ coefficients},
   journal={Atti Accad. Naz. Lincei Rend. Lincei Mat. Appl.},
   volume={26},
   date={2015},
   number={4},
   pages={433--443},
}

\bib{SSS18}{article}{
   author={Schikorra, Armin},
   author={Shieh, Tien-Tsan},
   author={Spector, Daniel E.},
   title={Regularity for a fractional $p$-Laplace equation},
   journal={Commun. Contemp. Math.},
   volume={20},
   date={2018},
   number={1},
   pages={1750003, 6},
}

\bib{SSS17}{article}{
   author={Schikorra, Armin},
   author={Spector, Daniel},
   author={Van Schaftingen, Jean},
   title={An $L^1$-type estimate for Riesz potentials},
   journal={Rev. Mat. Iberoam.},
   volume={33},
   date={2017},
   number={1},
   pages={291--303},
}

\bib{SS15}{article}{
   author={Shieh, Tien-Tsan},
   author={Spector, Daniel E.},
   title={On a new class of fractional partial differential equations},
   journal={Adv. Calc. Var.},
   volume={8},
   date={2015},
   number={4},
   pages={321--336},
}

\bib{SS18}{article}{
   author={Shieh, Tien-Tsan},
   author={Spector, Daniel E.},
   title={On a new class of fractional partial differential equations II},
   journal={Adv. Calc. Var.},
   volume={11},
   date={2018},
   number={3},
   pages={289--307},
}

\bib{Sil19}{article}{
   author={\v{S}ilhav\'y, Miroslav},
   title={Fractional vector analysis based on invariance requirements (Critique of coordinate approaches)},
   date={2020},
   journal={M. Continuum Mech. Thermodyn.},
   volume={32},
   number={1},
   pages={207-228},
}

\bib{S19}{article}{
   author={Spector, Daniel},
   title={A noninequality for the fractional gradient},
   journal={Port. Math.},
   volume={76},
   date={2019},
   number={2},
   pages={153--168},
}

\bib{S20-An}{article}{
   author={Spector, Daniel},
   title={An optimal Sobolev embedding for $L^1$},
   journal={J. Funct. Anal.},
   volume={279},
   date={2020},
   number={3},
   pages={108559, 26},
}

\bib{S20-New}{article}{
   author={Spector, Daniel},
   title={New directions in harmonic analysis on $L^1$},
   journal={Nonlinear Anal.},
   volume={192},
   date={2020},
   pages={111685, 20},
}

\bib{S70}{book}{
   author={Stein, Elias M.},
   title={Singular integrals and differentiability properties of functions},
   series={Princeton Mathematical Series, No. 30},
   publisher={Princeton University Press, Princeton, N.J.},
   date={1970},
}

\bib{S93}{book}{
   author={Stein, Elias M.},
   title={Harmonic analysis: real-variable methods, orthogonality, and oscillatory integrals},
   series={Princeton Mathematical Series},
   volume={43},
   publisher={Princeton University Press, Princeton, NJ},
   date={1993},
}

\end{biblist}
\end{bibdiv}

\end{document}